\newcommand{\norm}[1]{\left\Vert#1\right\Vert}
\newcommand{\set}[1]{\left\{#1\right\}}
\newcommand{\exset}[2]{S_{e}\left(#1,#2\right)}
\newcommand{\inpr}[2]{\left\langle#1,#2\right\rangle}
\newcommand{\R}{\mathbb{R}}
\newcommand{\ri}[1]{\textnormal{ri}\left(#1\right)}
\newcommand{\range}[1]{\textnormal{range}\,#1}
\newcommand{\dom}[1]{\textnormal{dom}\,#1}
\newtheorem*{algorithm}{Algorithm}
\newtheorem{theorem}{Theorem}[section]
\newtheorem{lemma}{Lemma}[section]
\newtheorem{proposition}{Proposition}[section]
\newtheorem{definition}{Definition}
\title{Projective method of multipliers for linearly constrained convex minimization}
\author{Majela Pent\'on Machado}
\address{IMPA, Instituto de Matem\'atica Pura e Aplicada, 22460-320 Rio de Janeiro, RJ, Brazil}
\email{majela@impa.br}
\thanks{This work is part of the author's Ph.D. thesis, written under the supervision of Benar Fux 
Svaiter at IMPA, and supported by CAPES and FAPERJ}
\begin{document}
% REQUIRED

\subjclass[2010]{49M29, 90C25, 65K05, 68Q25}

\keywords{Constrained optimization, convex programming, complexity, total variation denoising, compressed sensing.}

\begin{abstract}
We present a method for solving linearly constrained convex optimization problems, which is based on the application of known algorithms for finding zeros of the sum of two monotone operators (presented by Eckstein and Svaiter) to the dual problem. We establish convergence rates for the new method, and we present applications to TV denoising and compressed sensing problems.
\end{abstract}

\maketitle

\section{Introduction}
\label{sec:introduction}
A broad class of problems of recent interest in image science and signal processing can 
be posed in the framework of convex optimization. Examples include the TV denoising model \cite{rudin_tv} for image 
processing and basis pursuit, which is well known for playing a central role in the theory of compressed sensing. A general subclass of such programming problems is:
\begin{equation}
\label{eq:convex opt prob}
\begin{split}
\min_{u\in\R^{m_1},v\in\R^{m_2}}\left\lbrace f(u)+g(v)\,:\, Mu+Cv=d \right\rbrace.
\end{split}
\end{equation}
Here $f:\R^{m_{1}}\rightarrow(-\infty,\infty]$ and $g:\R^{m_{2}}\rightarrow(-\infty,\infty]$ are proper closed convex functions, $M:\R^{m_{1}}\rightarrow\R^n$ and $C:\R^{m_2}\rightarrow\R^{n}$ are 
linear operators, and $d\in\R^n$. 

A well-known iterative method for solving optimization problems that have a separable structure as \eqref{eq:convex opt prob} does, is the \emph{Alternating Direction Method of Multipliers} (ADMM), which goes back to the works of Glowinski 
and Marrocco \cite{glowinski_marroco}, and of Gabay and Mercier \cite{gabay1976dual}. ADMM solves the coupled problem 
\eqref{eq:convex opt prob} performing a sequences of steps that decouple functions $f$ and $g$, making it possible to 
exploit the individual structure of these functions. It can be interpreted in terms of alternating minimization, with 
respect to $u$ and $v$, of the augmented Lagrangian function associated with problem \eqref{eq:convex opt prob}. ADMM can 
also be viewed as an instance of the method called \emph{Douglas-Rachford splitting} applied to the dual problem of 
\eqref{eq:convex opt prob}, as was shown by Gabay in \cite{gabay1983chapter}.

Other splitting schemes have been effectively applied to the dual problem of \eqref{eq:convex opt prob}, which is a special case of the problem of finding a zero of the sum of two maximal monotone operators. For example, the \emph{Proximal Forward Backward splitting} method, developed by Lions and Mercier \cite{lions_mercier}, and Passty \cite{passty_1979_FB}, corresponds to the well-known Tseng's \cite{tseng_1991} \emph{Alternating Minimization Algorithm} (AMA) for solving \eqref{eq:convex opt prob}. This method has simpler steps than ADMM, in the former one of the minimizations of the augmented Lagrangian is 
replaced by the minimization of the Lagrangian itself; however, it requires strong convexity of one of the objective functions. 

The goal of our work is to construct an optimization scheme for solving \eqref{eq:convex opt prob} applying a splitting method to its dual problem. Specifically we are interested in the family of splitting-projective methods 
proposed in \cite{eck_sv_08} by Eckstein and Svaiter
to address inclusion problems given by the sum of two maximal monotone operators.
% for the general problem of finding a zero of the sum of two maximal monotone operators. 
We will apply a specific instance of these algorithms to solve a reformulation of the dual problem
of \eqref{eq:convex opt prob} as the problem of finding a zero of the sum of two maximal monotone operators,
which allows us to obtain a new algorithm for solving this problem.
This iterative method will be referred to as the \emph{Projective Method of Multipliers} (PMM). The 
convergence properties of the PMM will be obtained using the convergence results already established in \cite{eck_sv_08}. 
In contrast to \cite{eck_sv_08}, which only studies the global convergence of the family of splitting-projective methods, we also establish in this work the iteration complexity of the PMM.
Using the Karush-Kuhn-Tucker (KKT) conditions for problem \eqref{eq:convex opt prob} we give convergence rate for the PMM measured by the pointwise and ergodic iteration-complexities. 

The remainder of this paper is organized as follows. Section \ref{sec:preliminaries} reviews some definitions and facts 
on convex functions that will be used in our subsequent presentation. It also briefly discusses Lagrangian duality theory for convex optimization, for more details in this subject we refer the reader to \cite{rock_conv_anal}. Section \ref{sec:pmm} presents the Projective Method of Multipliers (PMM) for solving the class of linearly constrained optimization problems \eqref{eq:convex opt prob}. This section also presents global convergence of the PMM using the convergence analysis presented in \cite{eck_sv_08}. Section \ref{sec:complexity} derives iteration-complexity results for the PMM. Finally, section \ref{sec:application} presents some applications in image restoration and compressed sensing. This section also exhibits numerical results demonstrating the effectiveness of the PMM in solving these problems.

\subsection{Notation}
Throughout this paper, we let $\R^n$ denote an $n$-dimensional space with
inner product and induced norm denoted by $\inpr{\cdot}{\cdot}$ and $\norm{\cdot}$, respectively. For a matrix $A$, $A^T$ indicates its transpose and $\norm{A}_F=\sqrt{trace(AA^T)}$ its Frobenius norm. Given a linear operator $M$, we denote by $M^\ast$ its adjoint operator. If $C$ is a convex set we indicate by $\ri{C}$ its relative interior.

\section{Preliminaries}
\label{sec:preliminaries}
In this section we describe some basic definitions
and facts on convex analysis that will be needed along this work. We also discuss the Lagrangian formulation and dual problem of \eqref{eq:convex opt prob}. This approach will play an important role in the design of the PMM for problem \eqref{eq:convex opt prob}.

\subsection{Generalities on convex functions}

Given an extended real valued convex function $f:\R^n\rightarrow(-\infty,\infty]$, the domain of $f$ is the set 
\begin{equation*}
 \dom{f}=\set{x\in\R^n\,:\,f(x)<\infty}.
\end{equation*}
Since $f$ is a convex function, it is obvious that $\dom{f}$ is convex. We say that function $f$ is \emph{proper} if $\dom{f}\neq\emptyset$. Furthermore, we say that $f$ is \emph{closed} if it is a lower semicontinuous function. 

\begin{definition}
\label{def:subgrad}
Given a convex function $f:\R^n\rightarrow(-\infty,\infty]$ a vector $v\in\R^n$ is called a \emph{subgradient} of $f$ at $x\in\R^n$ if 
\begin{equation*}
f(x') \geq f(x)+\inpr{v}{x'-x}\qquad\qquad\forall x'\in\R^n.
\end{equation*}
The set of all subgradients of $f$ at $x$ is denoted by $\partial f(x)$. The operator $\partial f$, which maps each $x$ to $\partial f(x)$, is called the \emph{subdifferential} map associated with $f$.
\end{definition}

It can be seen immediately from the definition that $x^\ast$ is a global minimizer of $f$ in $\R^n$ if and only if $0\in\partial f(x^\ast)$. If $f$ is differentiable at $x$, then $\partial f(x)$ is the singleton set $\{\nabla f(x)\}$.

The subdifferential mapping of a convex function $f$ has the following \emph{monotonicity} property: for any $x$, $x'$, $v$ and $v'\in\R^n$ such that $v\in\partial f(x)$ and $v'\in\partial f(x')$, it follows that
\begin{equation}
\label{eq:monotonicity}
\inpr{x-x'}{v-v'}\geq 0.
\end{equation}
In addition, if $f$ is a proper closed convex function, then $\partial f$ is a \emph{maximal monotone} operator \cite{rock_1970}. This is to say that if $x,\,v\in\R^n$ are such that inequality \eqref{eq:monotonicity} holds for all $x'\in\R^n$ and $v'\in\partial f(x')$, then $x\in\dom{f}$ and $v\in\partial f(x)$.

Given $\lambda>0$, the \emph{resolvent mapping} (or \emph{proximal mapping}) \cite{moreau1965} associated with $\partial f$ is defined as
\begin{equation*}
%\label{eq:resolvent}
(I+\lambda\partial f)^{-1}(z) := \arg\min_{x\in\R^n} \lambda f(x) + \frac{1}{2}\norm{x-z}^2,\qquad\qquad\forall z\in\R^n.
\end{equation*}
The fact that $(I+\lambda\partial f)^{-1}$ is an everywhere well defined function, if $f$ is proper, closed and convex, is a consequence of a fundamental result due to Minty \cite{minty}. For example, if $f(x)=\mu\norm{x}_1=\mu\sum|x_i|$ where $\mu>0$, then
\begin{equation*}
(I+\partial f)^{-1}(z) = \textbf{shrink}(z,\mu),
\end{equation*}
where 
\begin{equation}
\label{eq:shrink}
\textbf{shrink}(z,\mu)_i := \max\{|z_i| - \mu,0\}sign(z_i).
\end{equation}

The \emph{Fenchel-Legendre conjugate} of a convex function $f$, denoted by $f^\ast:\R^n\rightarrow(-\infty,\infty]$, is defined as
\begin{equation*}
 f^\ast(v)=\sup_{x\in\R^n} \inpr{v}{x}-f(x)\,,\qquad\qquad\forall v\in\R^n.
\end{equation*}
It is simple to see that $f^\ast$ is a convex closed function.
Furthermore, if $f$ is proper, closed and convex, then $f^\ast$ is a proper function \cite{brezis_func_anal}.

\begin{definition}
Given any convex function $f:\R^n\to(-\infty,\infty]$ and $\epsilon\geq0$, a vector $v\in\R^n$ is called an $\epsilon$-\emph{subgradient} of $f$ at $x\in\R^n$ if 
\begin{equation*}
f(x')\geq f(x)+\inpr{v}{x'-x}-\epsilon\qquad\qquad\forall x'\in\R^n.
\end{equation*}
The set of all $\epsilon$-subgradients of $f$ at $x$ is denoted by $\partial_\epsilon f(x)$, and $\partial_\epsilon f$ is called the $\epsilon$\emph{-subdifferential} mapping.
\end{definition}

It is trivial to verify that $\partial_0f(x)=\partial f(x)$, and $\partial f(x)\subseteq\partial_\epsilon f(x)$ for every $x\in\R^n$ and $\epsilon\geq0$. The proposition below lists some useful properties of the $\epsilon$-subdifferential that will be needed in our presentation.
\begin{proposition}
\label{Prop:subdifferential}
If $f:\R^n\to(-\infty,\infty]$ is a proper closed convex function, $g:\R^n\to\R$ is a convex differentiable function in $\R^n$, and $M:\R^m\to\R^n$ is a linear transformation, then the following statements hold:
\begin{itemize}
\item[(a)] $v\in\partial_\epsilon f(x)$ if and only if $x\in\partial_\epsilon f^\ast(v)$ for all $\epsilon\geq0$;
\item[(b)] $\partial(f+g)(x)=\partial f(x)+\nabla g(x)$ for all $x\in\R^n$;
\item[(c)] $\partial(f\circ M)(x) \supseteq M^\ast\partial f(Mx)$ for all $x\in\R^m$. In addition, if $\ri{\dom{f}}\cap\range{M}\neq\emptyset$, then $\partial (f\circ M)(x)=M^\ast\partial f(Mx)$ for every $x\in\R^m$;
\item[(d)] if $x_{i},v_{i}\in\mathbb{R}^{n}$ and $\epsilon_{i},\alpha_{i}\in\mathbb{R}_{+}$, for $i=1,\ldots,k$, are such that
\begin{align*}
v_{i}\in \partial_{\epsilon_i} f(x_{i}),\quad i=1,\ldots,k, \qquad\qquad\sum_{i=1}^{k}\alpha_{i}=1,
\end{align*}
and we define 
\begin{equation*}
\overline{x} = \sum_{i=1}^{k}\alpha_{i}x_{i}, \qquad\quad \overline{v}=\sum_{i=1}^{k}\alpha_{i}v_{i}, \qquad\quad
\overline{\epsilon}=\sum_{i=1}^{k}\alpha_{i}(\epsilon_{i}+\inpr{x_{i}-\overline{x}}{v_{i}});
\end{equation*}
then, we have $\overline{\epsilon}\geq0$ and $\overline{v}\in \partial_{\overline{\epsilon}}f(\overline{x})$.
\end{itemize}
\end{proposition}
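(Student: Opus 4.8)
The plan is to treat the four items separately, since (a)--(c) are essentially the standard conjugacy, sum, and chain rules of convex analysis, while (d) is a genuine averaging (``transportation'') formula that will carry the real content.

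For (a), the key observation is that the defining inequality $f(x')\geq f(x)+\inpr{v}{x'-x}-\epsilon$ for all $x'$ is, after rearranging and taking the supremum over $x'$, equivalent to the symmetric relation
\[
f(x)+f^\ast(v)\leq\inpr{v}{x}+\epsilon.
\]
First I would record the Fenchel--Young inequality $f(x)+f^\ast(v)\geq\inpr{v}{x}$, which confirms that the displayed condition is exactly $v\in\partial_\epsilon f(x)$. Because $f$ is proper, closed and convex, the Fenchel--Moreau biconjugation theorem gives $f^{\ast\ast}=f$, so the displayed condition is literally unchanged under the swap $(f,x)\leftrightarrow(f^\ast,v)$; reading it off for $f^\ast$ yields $x\in\partial_\epsilon f^\ast(v)$. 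No obstacle is expected here.

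For (b) and (c), I would prove the ``easy'' inclusions by hand and invoke known constraint-qualification theorems for the reverse. For (b), if $w\in\partial f(x)$ then adding the subgradient inequality for $f$ to the gradient inequality $g(x')\geq g(x)+\inpr{\nabla g(x)}{x'-x}$ (valid since $g$ is convex and differentiable) gives $w+\nabla g(x)\in\partial(f+g)(x)$; the opposite inclusion is the Moreau--Rockafellar sum rule, whose constraint qualification holds automatically because $\dom{g}=\R^n$ forces $\ri{\dom{f}}\cap\ri{\dom{g}}\neq\emptyset$. For (c), if $w\in\partial f(Mx)$ then substituting $y=Mx'$ into $f(y)\geq f(Mx)+\inpr{w}{y-Mx}$ and using $\inpr{w}{Mx'-Mx}=\inpr{M^\ast w}{x'-x}$ gives $M^\ast w\in\partial(f\circ M)(x)$, which is the asserted inclusion; the equality under $\ri{\dom{f}}\cap\range{M}\neq\emptyset$ is Rockafellar's chain rule. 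Both reverse inclusions can be cited from \cite{rock_conv_anal}.

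The substance is (d), and here is the route I would follow. Writing out $v_i\in\partial_{\epsilon_i}f(x_i)$ means $f(x')\geq f(x_i)+\inpr{v_i}{x'-x_i}-\epsilon_i$ for every $x'$ and each $i$. I would take the convex combination of these $k$ inequalities with weights $\alpha_i$, using $\sum_i\alpha_i=1$, to obtain
\[
f(x')\geq\sum_{i=1}^k\alpha_i f(x_i)+\sum_{i=1}^k\alpha_i\inpr{v_i}{x'-x_i}-\sum_{i=1}^k\alpha_i\epsilon_i.
\]
The algebraic heart is to rewrite the middle sum as $\inpr{\overline{v}}{x'-\overline{x}}-\sum_i\alpha_i\inpr{v_i}{x_i-\overline{x}}$, which follows from $\overline{v}=\sum_i\alpha_i v_i$, $\overline{x}=\sum_i\alpha_i x_i$ and the identity $\inpr{\overline{v}}{\overline{x}}=\sum_i\alpha_i\inpr{v_i}{\overline{x}}$. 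Bounding $\sum_i\alpha_i f(x_i)\geq f(\overline{x})$ by convexity of $f$ and collecting the remaining terms into $\overline{\epsilon}=\sum_i\alpha_i(\epsilon_i+\inpr{x_i-\overline{x}}{v_i})$ then yields precisely $f(x')\geq f(\overline{x})+\inpr{\overline{v}}{x'-\overline{x}}-\overline{\epsilon}$, i.e. $\overline{v}\in\partial_{\overline{\epsilon}}f(\overline{x})$. Finally, $\overline{\epsilon}\geq0$ comes for free by evaluating this last inequality at $x'=\overline{x}$ (here one uses that $\overline{x}\in\dom{f}$, which holds because each $x_i\in\dom{f}$ and $\dom{f}$ is convex). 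I anticipate the only delicate point is keeping the bookkeeping of the inner-product terms correct so that the residual collapses exactly to the stated $\overline{\epsilon}$; there is no analytic difficulty beyond that.
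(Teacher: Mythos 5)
Your proposal is correct, but it is organized quite differently from the paper: the paper does not prove Proposition \ref{Prop:subdifferential} at all, it simply cites \cite{lemarechal_2} and \cite{rock_conv_anal} for items (a)--(c) and \cite{bur_sag_sv_99} for item (d), whereas you actually supply the arguments. The arguments you give are the standard ones underlying those citations, and they check out. For (a), the characterization $v\in\partial_\epsilon f(x)\iff f(x)+f^\ast(v)\leq\inpr{v}{x}+\epsilon$ is right (it holds even when $f(x)=\infty$, both sides then failing), and Fenchel--Moreau biconjugation makes the condition symmetric exactly as you say. For (b) and (c), your direct proofs of the easy inclusions are fine, and delegating the reverse inclusions to the Moreau--Rockafellar sum rule and Rockafellar's chain rule is legitimate; your observation that the constraint qualification in (b) is automatic because $\dom{g}=\R^n$ while $\ri{\dom{f}}\neq\emptyset$ for a proper convex $f$ is the right way to dispatch it. For (d), the bookkeeping does collapse as claimed: the middle sum rewrites as $\inpr{\overline{v}}{x'-\overline{x}}-\sum_{i}\alpha_i\inpr{v_i}{x_i-\overline{x}}$, convexity gives $\sum_i\alpha_i f(x_i)\geq f(\overline{x})$, and evaluating at $x'=\overline{x}$ yields $\overline{\epsilon}\geq0$. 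The only point worth making explicit there is the one you gesture at parenthetically: each $x_i\in\dom{f}$ is itself a consequence of the hypotheses, since $\partial_{\epsilon_i}f(x_i)\neq\emptyset$ together with properness of $f$ forces $f(x_i)<\infty$, and then $f(\overline{x})\leq\sum_i\alpha_i f(x_i)<\infty$ (with $f(\overline{x})>-\infty$ automatic) legitimizes the evaluation at $x'=\overline{x}$. In short, your route buys a self-contained proof at the cost of length, while the paper buys brevity by delegating four classical facts to the literature; mathematically the two are equivalent, since what you wrote is essentially the content of the cited sources.
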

\begin{proof}
Statements (a)-(c) are classical results which can be found, for example, in \cite{lemarechal_2} and \cite{rock_conv_anal}. For a proof of item (d) see \cite{bur_sag_sv_99} and references therein.
\end{proof}

\subsection{Lagrangian duality}

The \emph{Lagrangian function} $L:\R^{m_1}\times\R^{m_2}\times\R^n\rightarrow(-\infty,\infty]$ for problem \eqref{eq:convex opt prob} is defined as
\begin{equation}
\label{eq:lagrangian}
L(u,v,z) = f(u)+g(v)+\inpr{Mu+Cv-d}{z}.
\end{equation}
The \emph{dual function} is the concave function $\varphi:\R^n\to[-\infty,\infty)$ defined by
\begin{equation*}
%\label{eq:dual function}
\begin{split}
\varphi(z)&=\inf_{(u,v)\in\R^{m_1}\times\R^{m_2}}L(u,v,z),
\end{split}
\end{equation*}
and the \emph{dual problem} to \eqref{eq:convex opt prob} is
\begin{equation}
\label{eq:dual problem}
\max_{z\in\R^n}\varphi(z).
\end{equation}
Problem \eqref{eq:convex opt prob} will be called the \emph{primal problem}. Straightforward calculations show that weak duality holds, i.e. $\varphi^\ast\leq p^\ast$, where $p^\ast$ and $\varphi^\ast$ are the optimal values of 
(\ref{eq:convex opt prob}) and (\ref{eq:dual problem}), respectively. 

A vector $(u^\ast,v^\ast,z^\ast)$ such that $L(u^{\ast},v^{\ast},z^{\ast})$ is finite and it satisfies
\begin{equation}
\label{saddle point}
\min_{(u,v)\in\R^{m_1}\times\R^{m_2}}L(u,v,z^\ast)=L(u^\ast,v^\ast,z^\ast)
=\max_{z\in\R^n}L(u^\ast,v^\ast,z)
\end{equation}
is called a \emph{saddle point} of the Lagrangian function $L$.
Finding optimal solutions of problems \eqref{eq:convex opt prob} and \eqref{eq:dual problem} is equivalent to finding saddle points of $L$ (see \cite{rock_conv_anal}). That is, $(u^\ast,v^\ast)$ is an optimal primal
solution and $z^\ast$ is an optimal dual solution if and only if $(u^\ast,v^\ast,z^\ast)$ is a saddle point. Furthermore, if a saddle point 
of $L$ exists then $p^\ast=\varphi^\ast$, i.e. there is no duality gap \cite{rock_conv_anal}.

Notice that, if $(u^\ast,v^\ast,z^\ast)$ is a saddle point, from the definition of $L$ in \eqref{eq:lagrangian} and equalities \eqref{saddle point} we deduce that
\begin{equation*}
 f(u) + g(v) + \inpr{Mu+Cv-d}{z^\ast}\geq L(u^\ast,v^\ast,z^\ast)\geq f(u^\ast) + g(v^\ast) + \inpr{Mu^\ast+Cv^\ast-d}{z}
\end{equation*}
for all $u\in\R^{m_1}$, $v\in\R^{m_2}$, $z\in\R^n$. From these relations we can directly derive the \emph{Karush-Kuhn-Tucker} (KKT) conditions
\begin{equation}
\label{Kuhn Tucker}
 \begin{split}
  & 0= Mu^\ast+Cv^\ast -d,\\
  & 0\in\partial f (u^\ast) + M^\ast z^\ast,\\
  & 0\in\partial g (v^\ast) + C^\ast z^\ast,
 \end{split}
\end{equation}
which describe  an optimal solution of problem \eqref{eq:convex opt prob}.
Observe that the equality in \eqref{Kuhn Tucker} implies that the primal variables $(u^\ast,v^\ast)$ must be feasible. The inclusions in \eqref{Kuhn Tucker} are known as the dual feasibility conditions. We also have that the KKT conditions hold if and only if $(u^\ast,v^\ast,z^\ast)$ is a saddle point of $L$.

Observe that the dual function $\varphi$ can be written in terms of the Fenchel-Legendre conjugates of the functions $f$ and
$g$. Specifically,
\begin{align*}
 \varphi(z) = &\inf_{(u,v)\in\R^{m_1}\times\R^{m_2}} f(u)+g(v)+\inpr{Mu+Cv-d}{z}\\
 = &\inf_{u\in\R^{m_1}} f(u) + \inpr{Mu}{z} + \inf_{v\in\R^{m_2}} g(v) + \inpr{Cv}{z} - \inpr{d}{z}\\
 = & - f^\ast(-M^\ast z) - g^\ast(-C^\ast z) - \inpr{d}{z}.
\end{align*}
Hence, if we define the functions $h_1(z)=\left(f^\ast\circ-M^\ast\right)(z)$ and $h_2(z)=\left(g^\ast\circ-C^\ast\right) (z) + \inpr{d}{z}$, we have that the dual problem (\ref{eq:dual problem}) is equivalent to minimizing $h_1+h_2$ over $\R^n$. Furthermore, since $f^\ast$ and $g^\ast$ are convex and closed, and $M^\ast$ and $C^\ast$ are linear operators, it follows that $h_1$ and $h_2$ are convex closed functions \cite{rock_conv_anal}. Therefore, $z^\ast$ is a solution of \eqref{eq:dual problem} if and only if 
\begin{equation}
\label{eq:problem h1_h2}
 0 \in \partial (h_1+h_2)(z^\ast).
\end{equation}

Throughout this work, we assume that
\begin{enumerate}
\item[(A.1)] there exists $(u^{\ast},v^{\ast},z^{\ast})$ a saddle point of $L$.
\end{enumerate} 

Since condition A.1 implies that the KKT conditions hold, we have from the first inclusion in \eqref{Kuhn Tucker} and Proposition \ref{Prop:subdifferential}(a),(c) that $z^\ast\in\dom{(f^\ast\circ-M^\ast)}$, which implies that $h_1$ is a proper function. A similar argument shows that $h_2$ is also a proper function. Therefore, under hypothesis A.1, we have that the subdifferentials $\partial h_1$ and $\partial h_2$ are maximal monotone operators.

\section{The Projective Method of Multipliers}
\label{sec:pmm}
Our proposal in this work is to apply the splitting-projective methods developed in \cite{eck_sv_08}, by Eckstein and Svaiter, to find a solution of problem
\begin{equation*}
0\in\partial h_1(z) + \partial h_2(z),
\end{equation*}
and as a consequence a solution of the dual problem \eqref{eq:dual problem}, since the following inclusion holds
\begin{equation*}
\partial h_1(z) + \partial h_2(z) \subseteq \partial(h_1+h_2)(z)\qquad\quad\forall z\in\R^n
\end{equation*}
(see equation \eqref{eq:problem h1_h2} and the comments above).

The framework presented in \cite{eck_sv_08} reformulates the problem of finding a zero of the sum of two maximal monotone operators in terms of a convex feasibility problem, which is defined by a certain closed convex ``extended" solution set. To solve the feasibility problem, the authors introduced successive projection algorithms that use, on each iteration, independent calculations involving each operator.

Specifically, if we consider the subdifferential mappings $\partial h_1$ and $\partial h_2$, then the associated extended solution set, defined as in \cite{eck_sv_08}, is
 \begin{equation}
 \label{eq:ext sol set}
 S_e(\partial h_1,\partial h_2):=\set{(z,w)\in\R^n\times\R^n\,:\,-w\in\partial h_1(z)\,,\,w\in\partial h_2(z)}.
 \end{equation}
Since $\partial h_1$ and $\partial h_2$ are maximal monotone operators it can be proven that $S_e(\partial h_1,\partial h_2)$ is a closed convex set in $\R^n\times\R^n$, see \cite{eck_sv_08}. It is also easy to verify that if $(z^\ast,w^\ast)$ is a point in $S_e(\partial h_1,\partial h_2)$ then $z^\ast$ satisfies inclusion \eqref{eq:problem h1_h2} and consequently it is a solution of the dual problem. Furthermore, the following lemma holds.

\begin{lemma}
\label{Lem:saddle}
If $(u^\ast,v^\ast,z^\ast)$ is a saddle point of $L$, then $$(z^\ast,d-Cv^\ast)\in\exset{\partial h_1}{\partial h_2}.$$
Moreover, if we assume the following conditions 
\begin{enumerate}
\item[(A.2)] $\ri{\dom{f^\ast}}\cap \range{M^\ast}\neq\emptyset$;
\item[(A.3)] $\ri{\dom{g^\ast}}\cap \range{C^\ast}\neq\emptyset$.
\end{enumerate}
Then, for all $(z^\ast,w^\ast)\in S_e(\partial h_1,\partial h_2)$ there exist $u^\ast,v^\ast\in\R^n$ such that $w^\ast=d-Cv^\ast$, $w^\ast=Mu^\ast$ and $(u^\ast,v^\ast,z^\ast)$ is a saddle point of the Lagrangian function $L$.
\end{lemma}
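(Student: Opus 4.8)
The plan is to move back and forth between membership in the extended solution set $\exset{\partial h_1}{\partial h_2}$ and the KKT conditions \eqref{Kuhn Tucker}, using the subdifferential calculus collected in Proposition \ref{Prop:subdifferential}. The two key facts are the conjugacy equivalence (a), which turns an inclusion $-M^\ast z^\ast\in\partial f(u^\ast)$ into $u^\ast\in\partial f^\ast(-M^\ast z^\ast)$, and the composition rule (c) applied to the maps $f^\ast\circ(-M^\ast)$ and $g^\ast\circ(-C^\ast)$; note that $(-M^\ast)^\ast=-M$ and $\range{(-M^\ast)}=\range{M^\ast}$, so the qualification needed in (c) for these particular compositions is precisely A.2 and A.3. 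I would also record at the outset that $\partial h_2(z)=\partial(g^\ast\circ(-C^\ast))(z)+d$ by (b), since the linear term $\inpr{d}{z}$ is differentiable with gradient $d$.

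For the first assertion I would set $w^\ast:=d-Cv^\ast$ and observe immediately from the feasibility equation $Mu^\ast+Cv^\ast-d=0$ in \eqref{Kuhn Tucker} that $w^\ast=Mu^\ast$. The dual feasibility inclusion $0\in\partial f(u^\ast)+M^\ast z^\ast$ gives $-M^\ast z^\ast\in\partial f(u^\ast)$, hence $u^\ast\in\partial f^\ast(-M^\ast z^\ast)$ by (a); applying the containment half of (c) then yields $-Mu^\ast\in-M\,\partial f^\ast(-M^\ast z^\ast)\subseteq\partial h_1(z^\ast)$, that is, $-w^\ast\in\partial h_1(z^\ast)$. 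The symmetric computation starting from $0\in\partial g(v^\ast)+C^\ast z^\ast$, together with the extra summand $d$ supplied by (b), gives $d-Cv^\ast\in\partial h_2(z^\ast)$, i.e. $w^\ast\in\partial h_2(z^\ast)$. Thus $(z^\ast,w^\ast)\in\exset{\partial h_1}{\partial h_2}$, and this direction uses only the inclusion in (c), so A.2 and A.3 are not needed here.

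For the converse I would invoke A.2 and A.3, which upgrade the composition rule (c) to the equalities $\partial h_1(z)=-M\,\partial f^\ast(-M^\ast z)$ and $\partial(g^\ast\circ(-C^\ast))(z)=-C\,\partial g^\ast(-C^\ast z)$. Taking an arbitrary $(z^\ast,w^\ast)\in\exset{\partial h_1}{\partial h_2}$, the relation $-w^\ast\in\partial h_1(z^\ast)$ becomes $w^\ast\in M\,\partial f^\ast(-M^\ast z^\ast)$, so there exists $u^\ast\in\partial f^\ast(-M^\ast z^\ast)$ with $w^\ast=Mu^\ast$; by (a) this $u^\ast$ satisfies $0\in\partial f(u^\ast)+M^\ast z^\ast$. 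Likewise $w^\ast\in\partial h_2(z^\ast)$ reads $d-w^\ast\in C\,\partial g^\ast(-C^\ast z^\ast)$, producing $v^\ast$ with $w^\ast=d-Cv^\ast$ and $0\in\partial g(v^\ast)+C^\ast z^\ast$. Equating the two expressions $Mu^\ast=w^\ast=d-Cv^\ast$ recovers the feasibility equation $Mu^\ast+Cv^\ast-d=0$, so all three relations in \eqref{Kuhn Tucker} hold, and the equivalence between the KKT conditions and saddle points recorded in Section \ref{sec:preliminaries} completes the argument.

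I expect the only genuine subtlety to be the role of A.2 and A.3. The first part of the lemma holds unconditionally because it needs only the containment $M^\ast\partial f(Mx)\subseteq\partial(f\circ M)(x)$, whereas the converse must \emph{extract} feasible $u^\ast,v^\ast$ from the abstract membership in $\exset{\partial h_1}{\partial h_2}$ and therefore relies on the reverse containment; this is exactly what the constraint qualifications A.2 and A.3 guarantee through the equality case of Proposition \ref{Prop:subdifferential}(c).
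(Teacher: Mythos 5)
Your proof is correct and follows essentially the same route as the paper's: both directions pass through the KKT conditions \eqref{Kuhn Tucker} via Proposition \ref{Prop:subdifferential}(a)--(c), with A.2 and A.3 invoked exactly where the equality case of (c) is needed to extract $u^\ast$ and $v^\ast$ from the memberships in $S_e(\partial h_1,\partial h_2)$. The only cosmetic difference is that you verify $-w^\ast\in\partial h_1(z^\ast)$ and $w^\ast\in\partial h_2(z^\ast)$ separately in the forward direction, whereas the paper first adds the two inclusions and then applies the feasibility equality; the mathematical content is identical.
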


\begin{proof}
If $(u^\ast,v^\ast,z^\ast)$ is a saddle point of the Lagrangian function, then the KKT optimality conditions hold, and the inclusions in \eqref{Kuhn Tucker}, together with
Proposition \ref{Prop:subdifferential}(a), imply that
\begin{equation*}
 u^\ast\in\partial f^\ast(-M^\ast z^\ast) \qquad\text{and}\qquad v^\ast\in\partial g^\ast(-C^\ast z^\ast).
\end{equation*}
Thus, we have
\begin{align}
\label{eq:f conj}
 &-Mu^\ast\in-M\partial f^\ast(-M^\ast z^\ast)\subseteq\partial(f^\ast\circ-M^\ast)(z^\ast) = \partial h_1(z^\ast)
 \end{align}
 and
 \begin{align}
 \label{eq:g conj}
 &-Cv^\ast\in-C\partial g^\ast(-C^\ast z^\ast)\subseteq\partial(g^\ast\circ-C^\ast)(z^\ast);
\end{align}
where the second inclusions in \eqref{eq:f conj} and \eqref{eq:g conj} follow from Proposition \ref{Prop:subdifferential}(c). Adding $d$ to both sides of \eqref{eq:g conj} and
using the definition of $h_2$ and Proposition \ref{Prop:subdifferential}(b) we have $d-Cv^\ast\in\partial h_2(z^\ast)$. Now, adding this last inclusion to \eqref{eq:f conj} we conclude that 
$$-Mu^\ast+d-Cv^\ast\in\partial h_1(z^\ast) + \partial h_2(z^\ast).$$
The first assertion of the lemma follows combining the relation above with the equality in \eqref{Kuhn Tucker} and the
definition of $\exset{\partial h_1}{\partial h_2}$.

By \eqref{eq:ext sol set} we have that if $(z^\ast,w^\ast)\in S_e(\partial h_1,\partial h_2)$ then $w^\ast\in\partial h_2(z^\ast)=-C\partial g^\ast(-C^\ast z^\ast) + d$, where the equality follows from condition A.3 and Proposition \ref{Prop:subdifferential}(b),(c). Thus, there exists $v^\ast\in\partial g^\ast(-C^\ast z^\ast)$ such that $w^\ast=-Cv^\ast+d$, and applying Proposition \ref{Prop:subdifferential}(a) we obtain that $-C^\ast z^\ast\in\partial g(v^\ast)$.

Equivalently, using $-w^\ast\in\partial h_1(z^\ast)$, hypothesis A.2 and Proposition \ref{Prop:subdifferential}(a),(c), we deduce that there is a $u^\ast$ such that  $-w^\ast=-Mu^\ast$ and $-M^\ast z^\ast\in\partial f(u^\ast)$. All these conditions put together imply that $(u^\ast,v^\ast,z^\ast)$ is a saddle point of $L$.
\end{proof}

According to Lemma \ref{Lem:saddle}, we can attempt to find a saddle point of the Lagrangian function \eqref{eq:lagrangian}, by seeking a point in the extended solution set $S_e(\partial h_1,\partial h_2)$.

In order to solve the feasibility problem defined by $S_e(\partial h_1,\partial h_2)$, by successive orthogonal projection 
methods, the authors of \cite{eck_sv_08} used the resolvent mappings associated with the operators to construct affine 
separating hyperplanes.

In our setting the family of algorithms in \cite{eck_sv_08} follows the set of recursions
\begin{align}
\label{eq:project-split-alg}
&\lambda_kb_k + x_k = z_{k-1} + \lambda_kw_{k-1},&& b_k\in\partial h_2(x_k);\\
\label{eq:project-split-alg 1}
& \mu_ka_k + y_k = (1-\alpha_k)z_{k-1} + \alpha_kx_k - \mu_kw_{k-1},&& a_k \in\partial h_1(y_k);\\
\label{eq:project-split-alg 2}
& \gamma_{k}=\frac{\inpr{z_{k-1}-x_{k}}{b_{k}-w_{k-1}} + \inpr{z_{k-1}-y_{k}}{a_{k}+w_{k-1}}}{\norm{a_{k}+b_{k}}^{2} + \norm{x_{k}-y_{k}}^{2}},\\
\label{eq:project-split-alg 3}
& z_{k} = z_{k-1} - \rho_{k}\gamma_{k}(a_{k}+b_{k}),\\
\label{eq:project-split-alg 4}
& w_{k} = w_{k-1} - \rho_{k}\gamma_{k}(x_{k}-y_{k}),
\end{align}
where $\lambda_k$, $\mu_k>0$ and $\alpha_k\in\R$ are such that $(\mu_{k}/\lambda_{k}-(\alpha_{k}/2)^{2})>0$, and $\rho_k\in(0,2)$.

We observe that relations in \eqref{eq:project-split-alg} and the definition of the resolvent mapping yield that $x_k=(I+\lambda_k\partial h_2)^{-1}(z_{k-1} + \lambda_kw_{k-1})$ and $b_k=\frac{1}{\lambda_k}(z_{k-1}-x_k)+w_{k-1}$. Similarly, \eqref{eq:project-split-alg 1} implies that $y_k=(I+\mu_k\partial h_1)^{-1}((1-\alpha_k)z_{k-1} + \alpha_kx_k - \mu_kw_{k-1}))$ and $a_k=\frac{1}{\mu_k}((1-\alpha_k)z_{k-1}+\alpha_k x_k - y_k) - w_{k-1}$. Hence, steps \eqref{eq:project-split-alg} and \eqref{eq:project-split-alg 1} are evaluations of the proximal mappings.

With the view to see that iterations \eqref{eq:project-split-alg}-\eqref{eq:project-split-alg 4} truly are successive (relaxed) projection methods for the convex feasibility problem of finding a point in $S_e(\partial h_1,\partial h_2)$, we define, for all integer $k\geq1$, the affine function  $\phi_k(z,w):\R^n\times\R^n\to\R$ as
\begin{equation}
\label{eq:phi_k}
\phi_k(z,w) = \inpr{z-x_{k}}{b_{k}-w} + \inpr{z-y_{k}}{a_{k}+w},
\end{equation}
and its non-positive level set
  \begin{equation*}
  %  \label{eq:non-positive set}
      H_{\phi_k}=\set{(z,w)\;:\;\phi_k(z,w) \leq 0}.
  \end{equation*}
Thus, by the monotonicity of the subdifferential mappings we have that $S_e(\partial h_1,\partial h_2) \subseteq H_{\phi_k}$ and it is also easy to verify that the following relations hold 
\begin{align}
\label{eq:gradient ph_k}
&\nabla\phi_k = (a_k+b_k,x_k-y_k),\\
\label{eq:gamma_k}
&\gamma_k=\frac{\phi_k(z_{k-1},w_{k-1})}{\norm{\nabla\phi_k}^2}\quad\text{ and }\quad\gamma_k\geq0,
\end{align}
for all integer $k\geq1$. Therefore, we conclude that if $\rho_k=1$ the point $(z_k,w_k)$, calculated by the update rule given by \eqref{eq:project-split-alg 3}-\eqref{eq:project-split-alg 4}, is the orthogonal projection of $(z_{k-1},w_{k-1})$ onto $H_{\phi_k}$. Besides, if $\rho_k\neq1$ we have that $(z_k,w_k)$ is an under relaxed projection of $(z_{k-1},w_{k-1})$.

As was observed in the paragraph after \eqref{eq:project-split-alg 4}, in order to apply algorithm \eqref{eq:project-split-alg}-\eqref{eq:project-split-alg 4} it is necessary to calculate the resolvent mappings associated with $\partial h_1$ and $\partial h_2$. The next result shows how we can invert operators $I+\lambda\partial h_1$ and $I+\lambda\partial h_2$ for any $\lambda>0$.

\begin{lemma}
\label{Lem:resolvent h_1 h_2}
Consider $c\in\R^n$, $\theta:\R^m\to(-\infty,\infty]$ a proper closed convex function and $A:\R^m\to\R^n$ a linear operator such that $\dom{\theta^\ast}\cap\range{A^\ast}\neq\emptyset$. Let $z\in\R^n$ and $\lambda>0$. Then, if $\tilde{\nu}\in\R^{m}$ is a solution of problem
\begin{equation}
\label{sub problem g}
\min_{\nu\in\R^{m}} \theta(\nu)+\inpr{z}{A\nu-c}+\frac{\lambda}{2}\norm{A\nu-c}^{2}
\end{equation}
it holds that $c-A\tilde{\nu}\in\partial h(\hat{z})$ where $h(\cdot)=(\theta^\ast\circ-A^\ast)(\cdot)+\inpr{c}{\cdot}$ and $\hat{z}=z+\lambda(A\tilde{\nu}-c)$. Hence, $\hat{z}=(I+\lambda\partial h)^{-1}(z)$.
Furthermore, the set of optimal solutions of (\ref{sub problem g}) is nonempty.
\end{lemma}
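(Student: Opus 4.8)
The plan is to read the first-order optimality condition of \eqref{sub problem g} through the conjugacy and chain-rule calculus of Proposition \ref{Prop:subdifferential}, and to get the resolvent identity and the attainment from Minty's theorem. First I would establish the inclusion $c-A\tilde\nu\in\partial h(\hat z)$. Writing the objective of \eqref{sub problem g} as $F(\nu)=\theta(\nu)+\inpr{A^\ast z}{\nu}-\inpr{z}{c}+\frac{\lambda}{2}\norm{A\nu-c}^2$, the last two terms are smooth with gradient $A^\ast z+\lambda A^\ast(A\nu-c)$, which at $\nu=\tilde\nu$ equals $A^\ast\hat z$ since $\hat z=z+\lambda(A\tilde\nu-c)$. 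Hence, by Proposition \ref{Prop:subdifferential}(b), the stationarity $0\in\partial F(\tilde\nu)$ reads $-A^\ast\hat z\in\partial\theta(\tilde\nu)$. Proposition \ref{Prop:subdifferential}(a) turns this into $\tilde\nu\in\partial\theta^\ast(-A^\ast\hat z)$; applying the linear map $-A$ and the inclusion half of Proposition \ref{Prop:subdifferential}(c) (which needs no qualification) gives $-A\tilde\nu\in\partial(\theta^\ast\circ-A^\ast)(\hat z)$, and adding $c$ through Proposition \ref{Prop:subdifferential}(b) yields $c-A\tilde\nu\in\partial h(\hat z)$, the first claim.

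For the resolvent identity, note that $\hat z=z+\lambda(A\tilde\nu-c)$ gives $\frac{1}{\lambda}(z-\hat z)=c-A\tilde\nu$, so the inclusion just obtained says $z-\hat z\in\lambda\partial h(\hat z)$, i.e. $z\in(I+\lambda\partial h)(\hat z)$. Since the hypothesis $\dom{\theta^\ast}\cap\range{A^\ast}\neq\emptyset$ provides a $z_0$ with $-A^\ast z_0\in\dom{\theta^\ast}$, the function $h$ is proper (and it is closed and convex); hence, by Minty's theorem recalled in Section \ref{sec:preliminaries}, the resolvent $(I+\lambda\partial h)^{-1}$ is an everywhere single-valued map, and therefore $\hat z=(I+\lambda\partial h)^{-1}(z)$.

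The delicate part, and the one I expect to be the main obstacle, is the nonemptiness of the solution set of \eqref{sub problem g}. The clean route is to run the previous computation in reverse: by Minty the point $\hat z:=(I+\lambda\partial h)^{-1}(z)$ exists and satisfies $\frac{1}{\lambda}(z-\hat z)\in\partial h(\hat z)=\partial(\theta^\ast\circ-A^\ast)(\hat z)+c$. To extract a candidate minimizer I would use the \emph{equality} in Proposition \ref{Prop:subdifferential}(c), namely $\partial(\theta^\ast\circ-A^\ast)(\hat z)=-A\,\partial\theta^\ast(-A^\ast\hat z)$, pick $\tilde\nu\in\partial\theta^\ast(-A^\ast\hat z)$ with $-A\tilde\nu=\frac{1}{\lambda}(z-\hat z)-c$, rewrite this as $\hat z=z+\lambda(A\tilde\nu-c)$, and read the stationarity backwards (Proposition \ref{Prop:subdifferential}(a) gives $-A^\ast\hat z\in\partial\theta(\tilde\nu)$, hence $0\in\partial F(\tilde\nu)$) to conclude that $\tilde\nu$ solves \eqref{sub problem g}.

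The crux is that the equality in Proposition \ref{Prop:subdifferential}(c) requires the relative-interior form of the qualification, $\ri{\dom{\theta^\ast}}\cap\range{A^\ast}\neq\emptyset$ (the form supplied by conditions (A.2) and (A.3) in the applications); the bare condition $\dom{\theta^\ast}\cap\range{A^\ast}\neq\emptyset$ does not by itself force attainment, since $F$ is flat along $\ker A$. A self-contained way to see attainment under the relative-interior qualification is through recession functions: $F^\infty(\nu)=+\infty$ when $A\nu\neq0$, while for $\nu\in\ker A$ one has $F^\infty(\nu)=\sigma_{\dom{\theta^\ast}}(\nu)$, the support function of (the closure of) $\dom{\theta^\ast}$. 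Choosing $\bar w=A^\ast\bar\xi\in\ri{\dom{\theta^\ast}}$ one has $\inpr{\bar w}{\nu}=\inpr{\bar\xi}{A\nu}=0$ on $\ker A$, and a standard relative-interior argument then forces $\inpr{w}{\nu}=0$ for every $w\in\dom{\theta^\ast}$ on the recession cone, so that both $\sigma_{\dom{\theta^\ast}}(\nu)$ and $\sigma_{\dom{\theta^\ast}}(-\nu)$ vanish there. Thus the recession cone of $F$ is a subspace of directions along which $F$ is constant, and Rockafellar's attainment criterion produces a minimizer, giving the final assertion.
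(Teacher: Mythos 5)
Your proof is, in its main body, the same as the paper's: the forward inclusion via the optimality condition $0\in\partial\theta(\tilde{\nu})+A^\ast\hat{z}$, Proposition \ref{Prop:subdifferential}(a), the inclusion half of (c) and then (b); and attainment by running Minty's theorem on $\partial h$ and pulling a minimizer back through the identity $\partial h(\tilde{z})=-A\partial\theta^\ast(-A^\ast\tilde{z})+c$. So for the statement itself there is nothing to add.

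What is genuinely valuable in your write-up is the flag you raise in the last paragraph, and you should know that it applies to the paper's own proof as well: the paper derives the equality $\partial h(z)=-A\partial\theta^\ast(-A^\ast z)+c$ citing only the hypothesis $\dom{\theta^\ast}\cap\range{A^\ast}\neq\emptyset$, whereas its own Proposition \ref{Prop:subdifferential}(c) guarantees equality only under $\ri{\dom{\theta^\ast}}\cap\range{A^\ast}\neq\emptyset$. The bare intersection does suffice for $h$ to be proper (hence for $\partial h$ to be maximal monotone and for the resolvent identity, as you note), but not for the attainment claim: with $m=2$, $n=1$, $A(\nu_1,\nu_2)=\nu_1$ and $\theta(\nu_1,\nu_2)=e^{\nu_2}$ one has $\dom{\theta^\ast}=\{0\}\times[0,\infty)$, so $\dom{\theta^\ast}\cap\range{A^\ast}=\{(0,0)\}\neq\emptyset$, yet \eqref{sub problem g} has no minimizer (the infimum over $\nu_2$ of $e^{\nu_2}$ is not attained), and indeed $\ri{\dom{\theta^\ast}}\cap\range{A^\ast}=\emptyset$ there. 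So the lemma's hypothesis should be read in its relative-interior form; this costs nothing downstream, since Lemma \ref{Lem:resolvent h_1 h_2} is only invoked in Proposition \ref{Prop:conv} after conditions A.2 and A.3 (which are exactly the relative-interior qualifications for $\theta=f,g$) have been assumed. Your recession-function argument is a correct, self-contained alternative to the Minty-based attainment proof under that corrected hypothesis, though once $\ri{\dom{\theta^\ast}}\cap\range{A^\ast}\neq\emptyset$ is assumed the paper's shorter route through Minty and the exact chain rule works verbatim.
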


\begin{proof}
If $\tilde{\nu}\in\R^{m}$ is a solution of (\ref{sub problem g}), deriving the optimality condition of this minimization problem, we have
\begin{equation*}
\begin{split}
0\in \partial \theta(\tilde{\nu}) + A^\ast z + \lambda A^\ast (A\tilde{\nu}-c) = \partial \theta(\tilde{\nu}) + A^\ast(z+\lambda (A\tilde{\nu} - c)).
\end{split}
\end{equation*}
From the definition of $\hat{z}$ and the identity above it follows that 
\begin{equation*}
%\label{eq:inclusion f general}
0\in \partial \theta(\tilde{\nu}) + A^\ast\hat{z}.
\end{equation*}
Now, by equation above and Proposition \ref{Prop:subdifferential}(a),(c) we have
\begin{equation}
\label{eq:subd-g}
-A\tilde{\nu}\in\partial(\theta^\ast\circ-A^\ast)(\hat{z}).
\end{equation}
 
Since we are assuming that $\dom{\theta^\ast}\cap\range{A^\ast}\neq\emptyset$, the definition of $h$ and Proposition \ref{Prop:subdifferential}(b),(c) yield
\begin{equation}
\label{subdf h_2}
\partial h(z) = \partial(\theta^\ast\circ-A^\ast)(z) + c =-A\partial \theta^\ast(-A^\ast z) +c, \qquad\quad\forall z\in\R^n.
\end{equation}
Therefore, adding $c$ to both sides of \eqref{eq:subd-g} and combining with the equation above we deduce that $c-A\tilde{\nu}\in\partial h(\hat{z})$. 
The assertion that $\hat{z}=(I+\lambda\partial h)^{-1}(z)$ is a direct consequence of this last inclusion and the definition of $\hat{z}$.

Next, we notice that, since $\partial h$ is maximal monotone, Minty's theorem \cite{minty} asserts that for all $z\in\R^n$ and $\lambda>0$ there exist $\tilde{z}$, $w\in\R^n$ such that 
\begin{equation}
\label{proxy h_1}
\begin{cases}
&w\in\partial h(\tilde{z}),\\
&\lambda w+\tilde{z}=z.
\end{cases}
\end{equation}
Therefore, the inclusion above, together with equation \eqref{subdf h_2}, implies that there exits $\overline{\nu}\in\partial \theta^\ast(-A^\ast\tilde{z})$ such that $w=-A\overline{\nu}+c$. This last inclusion yields $-A^\ast\tilde{z}\in\partial \theta(\overline{\nu})$, from which we deduce that
\begin{equation*}
\begin{split}
0\in \partial \theta(\overline{\nu}) + A^\ast\tilde{z} = \partial \theta(\overline{\nu}) + A^\ast(z-\lambda w),
\end{split}
\end{equation*}
where the equality above follows from the equality in \eqref{proxy h_1}.
Finally, replacing $w$ by $c-A\overline{\nu}$ in the equation above, we obtain 
\begin{equation*}
%\label{eq:inclusion f general 1}
0\in \partial \theta(\overline{\nu}) + A^\ast(z+\lambda(A\overline{\nu}-c)),
\end{equation*}
from which follows that $\overline{\nu}$ is an optimal solution of problem (\ref{sub problem g}).
\end{proof}

In what follows we assume that conditions A.2 and A.3 are satisfied. We can now introduce the Projective Method of Multipliers.

\begin{algorithm}[\textbf{PMM}]
\label{algh:pmp}
Let $(z_0,w_0)\in\R^n\times\R^n$, $\lambda>0$ and $\overline{\rho}\in[0,1)$ be given. For $k=1,2,\ldots$.
\begin{itemize}
\item[1.] Compute $v_k\in\R^{m_2}$ as
\begin{equation}
\label{eq:problem g_k}
v_k=\arg \min_{v\in\R^{m_2}} g(v)+\inpr{z_{k-1}+\lambda w_{k-1}}{Cv-d}+\frac{\lambda}{2}\norm{Cv-d}^{2},
\end{equation}
and $u_k\in\R^{m_1}$ as
\begin{equation}
\label{eq:problem f_k}
u_k=\arg \min_{u\in\R^{m_1}} f(u)+\inpr{z_{k-1}+\lambda(Cv_k-d)}{Mu}+\frac{\lambda}{2}\norm{Mu}^{2}.
\end{equation}
\item[2.] If $\norm{Mu_k+Cv_k-d}+\norm{Mu_k-w_{k-1}}=0$ stop. Otherwise, set
\begin{equation*}
\gamma_k = \frac{\lambda\norm{Cv_k-d+w_{k-1}}^2+\lambda\inpr{d-Cv_k-Mu_k}{w_{k-1}-Mu_k}}{\norm{Mu_k+Cv_k-d}^2+\lambda^2\norm{Mu_k-w_{k-1}}^2}.
\end{equation*}
\item[3.] Choose $\rho_k\in[1-\overline{\rho},1+\overline{\rho}]$ and set
\begin{align*}
& z_{k} = z_{k-1} + \rho_k\gamma_k(Mu_k+Cv_k-d),\\
& w_{k} = w_{k-1} - \rho_k\gamma_k\lambda(w_{k-1}-Mu_k).
\end{align*}
\end{itemize}
\end{algorithm}

\begin{proposition}
\label{Prop:conv}
The PMM is a special instance of algorithm \eqref{eq:project-split-alg}-\eqref{eq:project-split-alg 4} where
\begin{equation}
\label{eq:definition lambda mu}
\lambda_k=\mu_k=\lambda,\qquad\qquad \alpha_k=1,
\end{equation}
and
\begin{equation}
\label{def abxy conv}
\begin{aligned}
& x_k=z_{k-1}+\lambda w_{k-1} + \lambda(Cv_k-d), &&\qquad\quad b_k=d-Cv_k\in\partial h_2(x_k),\\
& y_k=x_k-\lambda(w_{k-1}-Mu_k), &&\qquad\quad a_k=-Mu_k\in\partial h_1(y_k),
\end{aligned}
\end{equation}
for every integer $k\geq1$.
\end{proposition}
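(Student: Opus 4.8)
The goal is to verify that the PMM, as stated in Algorithm PMM, is literally the recursion \eqref{eq:project-split-alg}--\eqref{eq:project-split-alg 4} under the parameter choices \eqref{eq:definition lambda mu} and the identifications \eqref{def abxy conv}. The plan is to proceed step by step through the five lines of the abstract algorithm, substituting $\lambda_k=\mu_k=\lambda$ and $\alpha_k=1$, and checking that each reduces to the corresponding PMM step. The main conceptual work is to confirm that the two minimization subproblems \eqref{eq:problem g_k} and \eqref{eq:problem f_k} compute exactly the resolvents required in \eqref{eq:project-split-alg} and \eqref{eq:project-split-alg 1}; for this I will invoke Lemma \ref{Lem:resolvent h_1 h_2} twice, once with $(\theta,A,c)=(g,C,d)$ so that $h=h_2$, and once with $(\theta,A,c)=(f,-M,0)$ so that $h=h_1$.

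First I would handle step \eqref{eq:project-split-alg}. Applying Lemma \ref{Lem:resolvent h_1 h_2} with $\theta=g$, $A=C$, $c=d$, and with the base point $z$ in the lemma taken to be $z_{k-1}+\lambda w_{k-1}$, the subproblem \eqref{sub problem g} becomes exactly \eqref{eq:problem g_k}. The lemma then gives $b_k=d-Cv_k\in\partial h_2(x_k)$ with $x_k=(z_{k-1}+\lambda w_{k-1})+\lambda(Cv_k-d)$, which is precisely the definition of $x_k$ and $b_k$ in \eqref{def abxy conv}. It remains to check algebraically that these satisfy \eqref{eq:project-split-alg}, i.e. that $\lambda b_k+x_k=z_{k-1}+\lambda w_{k-1}$; substituting the two expressions, $\lambda(d-Cv_k)+z_{k-1}+\lambda w_{k-1}+\lambda(Cv_k-d)=z_{k-1}+\lambda w_{k-1}$, which holds. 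Next, for step \eqref{eq:project-split-alg 1} with $\alpha_k=1$, the right-hand side $(1-\alpha_k)z_{k-1}+\alpha_k x_k-\mu_k w_{k-1}$ collapses to $x_k-\lambda w_{k-1}$. Applying Lemma \ref{Lem:resolvent h_1 h_2} with $\theta=f$, $A=-M$, $c=0$ (so $A^\ast=-M^\ast$ and $h=h_1$) and base point $x_k-\lambda w_{k-1}$, the subproblem \eqref{sub problem g} becomes $\min_u f(u)+\inpr{x_k-\lambda w_{k-1}}{-Mu}+\tfrac{\lambda}{2}\norm{Mu}^2$; here I must check that this matches \eqref{eq:problem f_k}, which requires verifying that the linear term $\inpr{x_k-\lambda w_{k-1}}{-Mu}$ equals $\inpr{z_{k-1}+\lambda(Cv_k-d)}{Mu}$. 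Since $x_k-\lambda w_{k-1}=z_{k-1}+\lambda(Cv_k-d)$ by the already-established form of $x_k$, and $\inpr{x_k-\lambda w_{k-1}}{-Mu}=-\inpr{x_k-\lambda w_{k-1}}{Mu}$, one sees the sign works out precisely because $A=-M$ feeds a $-M^\ast$ into the conjugate, giving $h_1=f^\ast\circ(-M^\ast)$ as desired. The lemma yields $a_k=-Mu_k\in\partial h_1(y_k)$ with $y_k=(x_k-\lambda w_{k-1})+\lambda(-Mu_k\cdot(-1))$; I would simplify this to confirm $y_k=x_k-\lambda(w_{k-1}-Mu_k)$, matching \eqref{def abxy conv}.

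Having matched the two resolvent steps, the remaining three lines are direct substitution. For \eqref{eq:project-split-alg 2} I would substitute $a_k=-Mu_k$, $b_k=d-Cv_k$, and $w_{k-1}$ into the numerator and denominator. The numerator $\inpr{z_{k-1}-x_k}{b_k-w_{k-1}}+\inpr{z_{k-1}-y_k}{a_k+w_{k-1}}$ should, using $z_{k-1}-x_k=-\lambda w_{k-1}-\lambda(Cv_k-d)=-\lambda(w_{k-1}+Cv_k-d)$ and $z_{k-1}-y_k=z_{k-1}-x_k+\lambda(w_{k-1}-Mu_k)$, reduce after collecting terms to $\lambda\norm{Cv_k-d+w_{k-1}}^2+\lambda\inpr{d-Cv_k-Mu_k}{w_{k-1}-Mu_k}$; the denominator $\norm{a_k+b_k}^2+\norm{x_k-y_k}^2$ becomes $\norm{d-Cv_k-Mu_k}^2+\lambda^2\norm{w_{k-1}-Mu_k}^2=\norm{Mu_k+Cv_k-d}^2+\lambda^2\norm{Mu_k-w_{k-1}}^2$, exactly the $\gamma_k$ of the algorithm. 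Finally \eqref{eq:project-split-alg 3}--\eqref{eq:project-split-alg 4} give $z_k=z_{k-1}-\rho_k\gamma_k(a_k+b_k)=z_{k-1}-\rho_k\gamma_k(d-Cv_k-Mu_k)=z_{k-1}+\rho_k\gamma_k(Mu_k+Cv_k-d)$ and $w_k=w_{k-1}-\rho_k\gamma_k(x_k-y_k)=w_{k-1}-\rho_k\gamma_k\lambda(w_{k-1}-Mu_k)$, which are the step 3 updates. I also need $\rho_k\in(0,2)$, which follows from $\rho_k\in[1-\overline\rho,1+\overline\rho]\subseteq(0,2)$ since $\overline\rho\in[0,1)$, and the admissibility $\mu_k/\lambda_k-(\alpha_k/2)^2=1-1/4>0$.

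The main obstacle I anticipate is purely bookkeeping: correctly tracking the sign conventions forced by the choice $A=-M$ in the second application of Lemma \ref{Lem:resolvent h_1 h_2}, and ensuring the numerator of $\gamma_k$ simplifies to the stated closed form rather than an equivalent-but-different-looking expression. Both are routine but error-prone expansions; no genuinely hard estimate or structural argument is involved, since the entire content is that the PMM is a transcription of the Eckstein--Svaiter recursion under the stated specialization.
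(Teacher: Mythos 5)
Your overall strategy is exactly the paper's: apply Lemma \ref{Lem:resolvent h_1 h_2} twice to identify the two PMM subproblems with the resolvent steps \eqref{eq:project-split-alg} and \eqref{eq:project-split-alg 1}, then verify \eqref{eq:project-split-alg 2}--\eqref{eq:project-split-alg 4} by direct substitution; your first application (with $\theta=g$, $A=C$, $c=d$) and all of the closing algebra for $\gamma_k$ and the update rules are correct. However, the second application contains a genuine sign error. You instantiate the lemma with $A=-M$, but the lemma defines $h=(\theta^\ast\circ-A^\ast)(\cdot)+\inpr{c}{\cdot}$, i.e.\ the minus sign is already built into the lemma. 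With $A=-M$ you get $-A^\ast=M^\ast$, hence $h=f^\ast\circ M^\ast$, which is \emph{not} $h_1=f^\ast\circ(-M^\ast)$. The error propagates through every conclusion you draw from that application: the lemma's subproblem \eqref{sub problem g} becomes $\min_u f(u)+\inpr{x_k-\lambda w_{k-1}}{-Mu}+\frac{\lambda}{2}\norm{Mu}^2$, whose linear term is $-\inpr{z_{k-1}+\lambda(Cv_k-d)}{Mu}$, the opposite sign of \eqref{eq:problem f_k}, so $u_k$ does not solve the instantiated subproblem; your assertion that the sign works out ``because $A=-M$ feeds a $-M^\ast$ into the conjugate'' misreads the lemma, since $A=-M$ feeds $+M^\ast$ into it. Likewise the lemma would deliver $c-A\tilde{\nu}=+Mu_k\in\partial h(\hat{z})$ rather than $a_k=-Mu_k$, and $\hat{z}=z+\lambda(A\tilde{\nu}-c)=x_k-\lambda(w_{k-1}+Mu_k)\neq y_k$; the unexplained factor ``$(-1)$'' you insert when writing $y_k=(x_k-\lambda w_{k-1})+\lambda(-Mu_k\cdot(-1))$ is precisely where the mistake is papered over rather than resolved.

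The fix is the paper's instantiation: $\theta=f$, $A=M$, $c=0$, $z=x_k-\lambda w_{k-1}$, $\tilde{\nu}=u_k$. Then $h=f^\ast\circ(-M^\ast)=h_1$, the subproblem \eqref{sub problem g} is literally \eqref{eq:problem f_k} (using $x_k-\lambda w_{k-1}=z_{k-1}+\lambda(Cv_k-d)$, which you correctly established), and the lemma yields $a_k=c-Mu_k=-Mu_k\in\partial h_1(y_k)$ with $y_k=x_k-\lambda w_{k-1}+\lambda Mu_k=x_k-\lambda(w_{k-1}-Mu_k)$, matching \eqref{def abxy conv}. (The lemma's hypothesis $\dom{f^\ast}\cap\range{M^\ast}\neq\emptyset$ is assumption A.2 under either choice of sign, since $\range{(-M)^\ast}=\range{M^\ast}$, so that is not where the difficulty lies.) With that one substitution corrected, the remainder of your argument --- the collapse of \eqref{eq:project-split-alg 1} under $\alpha_k=1$, the expansion of the numerator and denominator of $\gamma_k$ via $z_{k-1}-x_k=-\lambda(w_{k-1}+Cv_k-d)$ and $z_{k-1}-y_k=\lambda(d-Mu_k-Cv_k)$, the update rules, and the admissibility checks $\mu_k/\lambda_k-(\alpha_k/2)^2=3/4>0$ and $\rho_k\in[1-\overline{\rho},1+\overline{\rho}]\subseteq(0,2)$ --- coincides with the paper's proof.
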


\begin{proof}
First we notice that \eqref{eq:definition lambda mu} implies
\begin{equation}
\label{eq:convergence condition}
\frac{\lambda_k}{\mu_k}-\left(\frac{\alpha_k}{2}\right)^2=\frac{\lambda}{\lambda}-\left(\frac{1}{2}\right)^2=\dfrac34,
\end{equation}
for all integer $k\geq1$. Next, applying Lemma \ref{Lem:resolvent h_1 h_2} with $\theta=g$, $A=C$, $c=d$, $z=z_{k-1}+\lambda w_{k-1}$ and $\tilde{\nu}=v_k$ we have that $x_k$ and $b_k$, defined as in \eqref{def abxy conv}, satisfy $b_k\in\partial h_2(x_k)$ and $x_k=(I+\lambda\partial h_2)^{-1}(z_{k-1}+\lambda w_{k-1})$. Therefore, the pair $(x_k,b_k)$ satisfies the relations in \eqref{eq:project-split-alg} with $\lambda_k=\lambda$. 

Similarly, applying Lemma \ref{Lem:resolvent h_1 h_2} with $\theta=f$, $A=M$, $c=0$, $z=x_k-\lambda w_{k-1}$ and $\tilde{\nu}=u_k$ we have that the points $y_k$ and $a_k$, given in \eqref{def abxy conv}, satisfy \eqref{eq:project-split-alg 1} with $\mu_k=\lambda$, $\alpha_k=1$ and $x_k$ defined in \eqref{def abxy conv}. 

Moreover, identities in (\ref{def abxy conv}) yield
\begin{equation}
\label{gradient phi conv}
b_k + a_k = d-Cv_k -Mu_k,\qquad\quad x_k-y_k = \lambda(w_{k-1}-Mu_k),
\end{equation}
and
\begin{equation}
\label{eq:z-y}
z_{k-1}-y_k= \lambda(d-Mu_k-Cv_k).
\end{equation}
Using \eqref{gradient phi conv}, \eqref{eq:z-y} and the definitions of $x_k$, $b_k$, $y_k$ and $a_k$ in \eqref{def abxy conv}, we can rewrite $\gamma_k$ in step 2 of the PMM as
\begin{equation*}
\gamma_{k}=\frac{\inpr{z_{k-1}-x_k}{b_k-w_{k-1}}+\inpr{z_{k-1}-y_k}{a_k+w_{k-1}}}{\norm{a_k+b_k}^2+\norm{x_k-y_k}^2},
\end{equation*}
which is exactly equation \eqref{eq:project-split-alg 2}. Finally, (\ref{gradient phi conv}) and the update rule in step 3 of the PMM imply that
\begin{equation*}
\begin{split}
& z_{k} = z_{k-1} - \rho_k\gamma_k(a_k+b_k),\\
& w_{k} = w_{k-1} - \rho_k\gamma_k(x_k-y_k).
\end{split}
\end{equation*}
Thus, the proposition is proven.
\end{proof}

From Proposition \ref{Prop:conv} and equalities in \eqref{gradient phi conv} it follows that if for some $k$ the stopping criterion in step 2 of the PMM holds, then 
\begin{equation}
\label{eq:stopping step 2-1}
Mu_k+Cv_k-d=0 \qquad\text{and}\qquad x_k-y_k = 0.
\end{equation}
Furthermore, by the definitions of $x_k$ and $y_k$ in \eqref{def abxy conv}, and the optimality conditions of problems
\eqref{eq:problem g_k} and \eqref{eq:problem f_k}, we have
\begin{equation}
\label{eq:stopping step 2-2}
0\in\partial g(v_k) + C^\ast x_k \qquad\text{and}\qquad 0\in\partial f(u_k) + M^\ast y_k,
\end{equation}
for all integer $k\geq1$. Combining \eqref{eq:stopping step 2-1} with \eqref{eq:stopping step 2-2} we may conclude that if the PMM stops in step 2, then $(u_k,v_k,x_k)$ satisfies the KKT conditions, and consequently it is a saddle point of $L$.

Otherwise, if the PMM generates an infinite sequence, in view of Proposition \ref{Prop:conv}, we are able to establish its global convergence using the convergence results presented in \cite{eck_sv_08}.

\begin{theorem}
Consider the sequences $\{(u_k,v_k)\}$, $\{(z_k,w_k)\}$, $\{\gamma_k\}$  and $\{\rho_k\}$ generated by the PMM. Consider also the sequences $\{x_k\}$, $\{b_k\}$, $\{y_k\}$ and $\{a_k\}$ defined in (\ref{def abxy conv}). Then,  the following statements hold.
\begin{itemize}
\item[(a)] There exist $z^\ast$ a solution of the dual problem \eqref{eq:dual problem} and $w^\ast\in\R^n$ such that $-w^\ast\in\partial h_1(z^\ast)$, $w^\ast\in\partial h_2(z^\ast)$, and $(x_k,b_k)\to(z^\ast,w^\ast)$, $(y_k,-a_k)\to(z^\ast,w^\ast)$ and $(z_k,w_k)\to(z^\ast,w^\ast)$.
\item[(b)] $Mu_k+Cv_k-d\to0$ and $x_k-y_k\to0$.
\item[(c)] $\lim\limits_{k\to\infty}f(u_k)+g(v_k)=p^\ast$.
\end{itemize}
\end{theorem}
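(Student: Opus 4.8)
The plan is to lean on Proposition \ref{Prop:conv}, which exhibits the PMM as a particular instance of the splitting--projective recursion \eqref{eq:project-split-alg}--\eqref{eq:project-split-alg 4}. Since the stepsize condition \eqref{eq:convergence condition} holds uniformly and the relaxation parameters $\rho_k$ stay in a compact subinterval of $(0,2)$, the hypotheses of the global convergence theorem of Eckstein and Svaiter in \cite{eck_sv_08} are met. That theorem asserts that the outer sequence and both resolvent sequences converge to a common limit $(z^\ast,w^\ast)$ lying in the extended solution set. Invoking it yields part (a) at once: $(z_k,w_k)\to(z^\ast,w^\ast)$, $(x_k,b_k)\to(z^\ast,w^\ast)$ and $(y_k,-a_k)\to(z^\ast,w^\ast)$ with $(z^\ast,w^\ast)\in\exset{\partial h_1}{\partial h_2}$. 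By the definition \eqref{eq:ext sol set} this gives $-w^\ast\in\partial h_1(z^\ast)$ and $w^\ast\in\partial h_2(z^\ast)$, hence $0\in\partial h_1(z^\ast)+\partial h_2(z^\ast)\subseteq\partial(h_1+h_2)(z^\ast)$, and by \eqref{eq:problem h1_h2} the point $z^\ast$ solves the dual problem \eqref{eq:dual problem}.

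Part (b) then drops out of the identities \eqref{gradient phi conv} recorded in the proof of Proposition \ref{Prop:conv}. From $a_k+b_k=d-Cv_k-Mu_k=-(Mu_k+Cv_k-d)$ and the limits $a_k\to-w^\ast$, $b_k\to w^\ast$ of part (a) we get $Mu_k+Cv_k-d\to0$; and $x_k-y_k\to0$ follows either directly from $x_k,y_k\to z^\ast$ or from $x_k-y_k=\lambda(w_{k-1}-Mu_k)$ together with $w_{k-1}\to w^\ast$ and $Mu_k=-a_k\to w^\ast$.

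The substantive part is (c). First I would apply Lemma \ref{Lem:saddle} to the limit point $(z^\ast,w^\ast)\in\exset{\partial h_1}{\partial h_2}$ to produce saddle-point coordinates $u^\ast,v^\ast$ satisfying $Mu^\ast=w^\ast$ and $Cv^\ast=d-w^\ast$; combined with part (a) this gives the crucial limits $Mu^\ast-Mu_k\to0$ and $Cv^\ast-Cv_k\to0$, not merely asymptotic feasibility. Next I would sandwich $f(u_k)+g(v_k)$. For the upper bound I use the exact optimality conditions \eqref{eq:stopping step 2-2}, namely $-M^\ast y_k\in\partial f(u_k)$ and $-C^\ast x_k\in\partial g(v_k)$, in the subgradient inequality evaluated at $u^\ast$ and $v^\ast$, obtaining
\begin{equation*}
f(u_k)+g(v_k)\leq p^\ast+\inpr{y_k}{Mu^\ast-Mu_k}+\inpr{x_k}{Cv^\ast-Cv_k}.
\end{equation*}
Since $x_k,y_k\to z^\ast$ are bounded and the two linear residuals tend to $0$, the right-hand side has $\limsup\leq p^\ast$. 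For the lower bound I use the KKT inclusions \eqref{Kuhn Tucker}, that is $-M^\ast z^\ast\in\partial f(u^\ast)$ and $-C^\ast z^\ast\in\partial g(v^\ast)$, in the subgradient inequality evaluated at $u_k$ and $v_k$, which after using $Mu^\ast+Cv^\ast=d$ collapses to
\begin{equation*}
f(u_k)+g(v_k)\geq p^\ast-\inpr{z^\ast}{Mu_k+Cv_k-d}.
\end{equation*}
By part (b) the right-hand side has $\liminf\geq p^\ast$, and the two bounds together give $\lim_k\,[f(u_k)+g(v_k)]=p^\ast$.

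The main obstacle is precisely the bookkeeping in (c): part (b) only delivers asymptotic primal feasibility $Mu_k+Cv_k-d\to0$, which by itself controls the lower bound but not the cross terms $\inpr{y_k}{Mu^\ast-Mu_k}+\inpr{x_k}{Cv^\ast-Cv_k}$ in the upper bound. The point that makes the upper bound work is that one must select the saddle point \emph{matched to the limit} through Lemma \ref{Lem:saddle} --- so that $Mu^\ast=w^\ast$ and $Cv^\ast=d-w^\ast$ --- rather than the a~priori saddle point of hypothesis A.1; only then do $Mu_k\to Mu^\ast$ and $Cv_k\to Cv^\ast$ follow from the convergences $-a_k\to w^\ast$ and $b_k\to w^\ast$, forcing the cross terms to vanish.
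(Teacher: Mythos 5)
Your proof is correct, and in parts (a) and (b) it coincides with the paper's argument: Proposition \ref{Prop:conv} plus Proposition 3 of \cite{eck_sv_08} for the limits, then the identities \eqref{gradient phi conv} for the residuals. Where you genuinely diverge is the upper bound in part (c). The paper takes an \emph{arbitrary} saddle point $(u^\ast,v^\ast,z^\ast)$ supplied by hypothesis A.1 and, after adding the two subgradient inequalities coming from \eqref{eq:stopping step 2-2}, disposes of the cross terms through the algebraic rearrangement
\begin{equation*}
\inpr{x_k}{Cv^\ast-Cv_k}+\inpr{y_k}{Mu^\ast-Mu_k}=\inpr{x_k-y_k}{Cv^\ast-Cv_k}+\inpr{y_k}{d-Mu_k-Cv_k},
\end{equation*}
which is valid because $Mu^\ast+Cv^\ast=d$; this needs only boundedness of $\{y_k\}$ and of $\{Cv_k\}$ (the latter from convergence of $b_k=d-Cv_k$), together with the two vanishing residuals of part (b). You instead invoke the second half of Lemma \ref{Lem:saddle} to select a saddle point \emph{matched} to the limit $(z^\ast,w^\ast)$, so that $Mu^\ast=w^\ast$, $Cv^\ast=d-w^\ast$, hence $Mu_k=-a_k\to Mu^\ast$ and $Cv_k=d-b_k\to Cv^\ast$, and the cross terms die against the bounded sequences $\{x_k\},\{y_k\}$. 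Both routes are valid in the paper's setting, since A.2--A.3 are standing assumptions in the section where the PMM is defined, and your route even yields the slightly stronger conclusion $Mu_k\to w^\ast$, $Cv_k\to d-w^\ast$. Two caveats, though. First, your closing claim that the matched selection is what \emph{must} be done (``only then do the cross terms vanish'') is overstated: the paper's rearrangement shows that asymptotic feasibility and boundedness already suffice for any saddle point, matched or not. Second, and correspondingly, the paper's proof of (c) consumes only A.1, whereas yours genuinely uses the relative-interior conditions A.2--A.3 through Lemma \ref{Lem:saddle}; your argument would not survive if those conditions were dropped, while the paper's would. Your lower bound is the paper's in different clothing: the subgradient inequalities at the KKT point are exactly the saddle-point inequality $L(u^\ast,v^\ast,z^\ast)\le L(u_k,v_k,z^\ast)$ that the paper uses.
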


\begin{proof}
\item[(a)] According to Proposition \ref{Prop:conv} the PMM is an instance of the algorithms in \cite{eck_sv_08} applied to the subdifferential 
operators $\partial h_1$ and $\partial h_2$, and with generated sequences $\{(z_k,w_k)\}$, calculated by step 3 of the PMM, and $\{(x_k,b_k)\}$, $\{(y_k,a_k)\}$, which are defined in \eqref{def abxy conv}. From assumption A.1 and equation \eqref{eq:convergence condition} it follows that the hypotheses of \cite[Proposition 3]{eck_sv_08} are satisfied. Thus, invoking this 
proposition we have that 
there exists $(z^\ast,w^\ast)\in S_e(\partial h_1,\partial h_2)$ such that
\begin{equation}
\label{eq:convergence limit}
(z_k,w_k)\to(z^\ast,w^\ast),\qquad(x_k,b_k)\to(z^\ast,w^\ast)\qquad\text{and}\qquad (y_k,-a_k)\to(z^\ast,w^\ast).
\end{equation}
Moreover, since $(z^\ast,w^\ast)\in S_e(\partial h_1,\partial h_2)$ we have that $-w^\ast\in\partial h_1(z^\ast)$, $w^\ast\in\partial h_2(z^\ast)$ and $z^\ast$ is a solution of the dual problem \eqref{eq:dual problem}. 

\item[(b)] By \eqref{eq:convergence limit} it trivially follows that $x_k-y_k\to0$ and $a_k+b_k\to0$. Hence, using the definition of $a_k$ and $b_k$ we deduce that $Mu_k+Cv_k-d\to0$.

\item[(c)] Let $(u^\ast,v^\ast,z^\ast)$ be a KKT point of $L$, which exists from hypothesis A.1, then from the first equality in \eqref{saddle point} we have 
\begin{equation*}
L(u^\ast,v^\ast,z^\ast) \leq L(u_k,v_k,z^\ast), \qquad\quad\text{for }\, k=1,2,\ldots.
\end{equation*}
From equation above, the definition of the Lagrangian function in \eqref{eq:lagrangian} and the KKT conditions \eqref{Kuhn Tucker} it follows that 
\begin{equation*}
f(u^\ast) + g(v^\ast) \leq f(u_k) + g(v_k) + \inpr{Mu_k+Cv_k-d}{z^\ast}.
\end{equation*}
Since $p^\ast=f(u^\ast)+g(v^\ast)$, combining inequality above with item (b) we deduce that
\begin{equation}
\label{eq:liminf-f+g}
p^\ast\leq\liminf_{k\to\infty} f(u_k) + g(v_k).
\end{equation}

Now, we observe that the first inclusion in \eqref{eq:stopping step 2-2}, together with Definition \ref{def:subgrad}, implies
\begin{equation*}
g(v^\ast) \geq g(v_k) - \inpr{C^\ast x_k}{v^\ast-v_k}.
\end{equation*}
Equivalently, from the second inclusion in \eqref{eq:stopping step 2-2} and Definition \ref{def:subgrad} it follows that
\begin{equation*}
f(u^\ast) \geq f(u_k) - \inpr{M^\ast y_k}{u^\ast-u_k}.
\end{equation*}
Adding the two equations above we obtain
\begin{equation*}
\begin{split}
p^\ast \geq &\, f(u_k) + g(v_k) - \inpr{C^\ast x_k}{v^\ast-v_k} - \inpr{M^\ast y_k}{u^\ast-u_ k}\\
= &\, f(u_k) + g(v_k) - \inpr{x_k}{Cv^\ast-Cv_k} - \inpr{y_k}{Mu^\ast-Mu_k}\\
= &\, f(u_k) + g(v_k) - \inpr{x_k-y_k}{Cv^\ast-Cv_k} - \inpr{y_k}{d-Mu_k-Cv_k},
\end{split}
\end{equation*}
where the last equality follows from a simple manipulation and the equality in \eqref{Kuhn Tucker}. Since $\{b_k=d-Cv_k\}$ and $\{y_k\}$ are convergent sequences, therefore bounded sequences, equation above, together with item (b), yields
\begin{equation*}
p^\ast \geq \limsup_{k\to\infty} f(u_k) + g(v_k).
\end{equation*}
Combining inequality above with \eqref{eq:liminf-f+g} we conclude the proof.
\end{proof}

\section{Complexity results}
\label{sec:complexity}

Our goal in this section is to study the iteration complexity of the PMM for solving problem \eqref{eq:convex opt prob}. In order to develop global convergence bounds for the method we will examine how well its iterates satisfy the KKT conditions. 
Observe that the inclusions in \eqref{eq:stopping step 2-2} indicate that the quantities $\norm{Mu_k+Cv_k-d}$ and $\norm{x_k-y_k}$ can be used to measure the accuracy of an iterate $(u_k,v_k,x_k)$ to a saddle point of the Lagrangian function. More specifically, if we define the primal and dual residuals, associated with $(u_k,v_k,x_k)$, by
\begin{equation*}
\begin{split}
& r^p_k = Mu_k+Cv_k-d,\\
& r^d_k = x_k-y_k;
\end{split}
\end{equation*}
then, from the inclusions in \eqref{eq:stopping step 2-2} and the KKT conditions it follows that when $\norm{r_k^p}=\norm{r_k^d}=0$, the triplet $(u_k,v_k,x_k)$ is a saddle point of $L$. 
Therefore, the size of these residuals indicates how far the iterates are from a saddle point, and it can be viewed as an error measurement of the PMM. It is thus reasonable to seek upper bounds for these quantities for the purpose of investigating the convergence rate of the PMM.

The \,theorem \,below estimates the quality of the best iterate among $(u_1,v_1,x_1),\dots,(u_k,v_k,x_k)$, in terms of the error measurement given by the primal and dual residuals. We refer to these estimates as \emph{pointwise} complexity bounds for the PMM.

\begin{theorem}
\label{Teo:complexity-point}
Consider the sequences $\{(u_k,v_k)\}$, $\{(z_k,w_k)\}$, $\{\gamma_k\}$  and $\{\rho_k\}$ generated by the PMM. Consider also the sequences $\{x_k\}$, $\{b_k\}$, $\{y_k\}$ and $\{a_k\}$
defined in (\ref{def abxy conv}). If $d_{0}$ is the distance of $(z_0,w_0)$ to the set $\exset{\partial h_1}{\partial h_2}$, then for all $k=1,2,\dots,$ we have
\begin{equation}
\label{eq:inclusions fg}
0\in\partial g(v_k) + C^\ast x_k,\qquad\qquad 0\in\partial f(u_k)+M^\ast y_k,
\end{equation}
and there exists and index $1\leq i\leq k$ such that
\begin{equation}
\label{estima it conv}
\norm{Mu_i+Cv_i-d}\leq \frac{2d_0}{(1-\overline{\rho})\tau\sqrt{k}},\qquad\qquad\quad \norm{x_i-y_i}\leq\frac{2d_0}{(1-\overline{\rho})\tau\sqrt{k}};
\end{equation}
where $\tau=\min\left\lbrace\lambda,\dfrac{1}{\lambda}\right\rbrace$. 
\end{theorem}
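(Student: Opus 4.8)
The two inclusions \eqref{eq:inclusions fg} are not new: they are exactly \eqref{eq:stopping step 2-2}, obtained by writing the first-order optimality conditions of the subproblems \eqref{eq:problem g_k} and \eqref{eq:problem f_k} and substituting the definitions of $x_k$ and $y_k$ from \eqref{def abxy conv}. Since these hold for every $k\ge1$, I would simply recall that derivation and move on to the quantitative estimate.

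For \eqref{estima it conv} the plan is to exploit a Fej\'er-type contraction inequality for the relaxed projections produced in step 3 of the PMM. Let $(z^\ast,w^\ast)$ be the orthogonal projection of $(z_0,w_0)$ onto the closed convex set $\exset{\partial h_1}{\partial h_2}$ (nonempty by A.1 and Lemma \ref{Lem:saddle}), so that $\norm{(z_0,w_0)-(z^\ast,w^\ast)}=d_0$. Writing $p_k=(z_k,w_k)$, steps \eqref{eq:project-split-alg 3}--\eqref{eq:project-split-alg 4} together with \eqref{eq:gradient ph_k} give $p_k=p_{k-1}-\rho_k\gamma_k\nabla\phi_k$. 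Since $\phi_k$ is affine with $\phi_k(p_{k-1})=\gamma_k\norm{\nabla\phi_k}^2$ by \eqref{eq:gamma_k}, and $\exset{\partial h_1}{\partial h_2}\subseteq H_{\phi_k}$ forces $\phi_k(z^\ast,w^\ast)\le0$, expanding the square and using $\inpr{\nabla\phi_k}{p_{k-1}-(z^\ast,w^\ast)}=\phi_k(p_{k-1})-\phi_k(z^\ast,w^\ast)\ge\gamma_k\norm{\nabla\phi_k}^2$ yields
\begin{equation*}
\norm{p_k-(z^\ast,w^\ast)}^2\le\norm{p_{k-1}-(z^\ast,w^\ast)}^2-\rho_k(2-\rho_k)\gamma_k^2\norm{\nabla\phi_k}^2.
\end{equation*}
Summing this telescoping inequality over $k=1,\dots,N$ and discarding the final nonnegative term gives $\sum_{k=1}^N\rho_k(2-\rho_k)\gamma_k^2\norm{\nabla\phi_k}^2\le d_0^2$.

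It then remains to bound each summand from below. Since $\rho_k\in[1-\overline{\rho},1+\overline{\rho}]$ one has $\rho_k(2-\rho_k)\ge(1-\overline{\rho})^2$. The decisive step, which I expect to be the main obstacle, is the lower bound $\gamma_k\ge\tau/2$. For this I would rewrite the numerator of $\gamma_k$ (equivalently $\phi_k(z_{k-1},w_{k-1})$) in terms of the residuals $Mu_k+Cv_k-d$ and $w_{k-1}-Mu_k$; using the identities \eqref{gradient phi conv} a direct computation gives
\begin{equation*}
\phi_k(z_{k-1},w_{k-1})=\lambda\left(\norm{Mu_k+Cv_k-d}^2+\inpr{Mu_k+Cv_k-d}{w_{k-1}-Mu_k}+\norm{w_{k-1}-Mu_k}^2\right).
\end{equation*}
Bounding the cross term below by $-\tfrac12\big(\norm{Mu_k+Cv_k-d}^2+\norm{w_{k-1}-Mu_k}^2\big)$ and comparing with $\norm{\nabla\phi_k}^2=\norm{Mu_k+Cv_k-d}^2+\lambda^2\norm{w_{k-1}-Mu_k}^2$, a short case analysis according to whether $\lambda\ge1$ or $\lambda<1$ produces $\gamma_k\ge\tau/2$ with $\tau=\min\{\lambda,1/\lambda\}$.

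Combining the two lower bounds with $\norm{\nabla\phi_k}^2=\norm{Mu_k+Cv_k-d}^2+\norm{x_k-y_k}^2$ gives
\begin{equation*}
\frac{(1-\overline{\rho})^2\tau^2}{4}\sum_{k=1}^N\left(\norm{Mu_k+Cv_k-d}^2+\norm{x_k-y_k}^2\right)\le d_0^2.
\end{equation*}
Finally, choosing the index $1\le i\le N$ that minimizes $\norm{Mu_i+Cv_i-d}^2+\norm{x_i-y_i}^2$, this minimum is at most the average $4d_0^2/\big(N(1-\overline{\rho})^2\tau^2\big)$; since each of the two squared residuals is bounded by their sum, taking square roots and setting $N=k$ yields \eqref{estima it conv}. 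Every step after the $\gamma_k\ge\tau/2$ estimate is routine projection bookkeeping, so the algebraic identity for $\phi_k$ together with its case analysis is where I would concentrate the effort.
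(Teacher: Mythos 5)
Your proposal is correct and follows essentially the same route as the paper: the Fej\'er-type inequality for relaxed projections summed telescopically against the projection of $(z_0,w_0)$ onto $\exset{\partial h_1}{\partial h_2}$, the key bound $\gamma_k\geq\tau/2$, and a best-index averaging argument. Even your treatment of the cross term via $\inpr{p}{q}\leq\tfrac12(\norm{p}^2+\norm{q}^2)$ is just the completed-square form the paper uses when it rewrites $\phi_j(z_{j-1},w_{j-1})$ as $\tfrac\lambda2\norm{Cv_j-d+w_{j-1}}^2+\tfrac\lambda2\left(\norm{d-Cv_j-Mu_j}^2+\norm{w_{j-1}-Mu_j}^2\right)$ and drops the first term, so the two arguments coincide step for step.
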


\begin{proof}
Inclusions \eqref{eq:inclusions fg} were established in \eqref{eq:stopping step 2-2}. Therefore, what is left is to show the bounds in \eqref{estima it conv}.
Since for all integer $k\geq1$ the point $(z_k,w_k)$ is a relaxed projection of $(z_{k-1},w_{k-1})$ onto the set $H_{\phi_k}$ and $S_e(\partial h_1,\partial h_2)\subseteq H_{\phi_k}$, we take an arbitrary $(z^\ast,w^\ast)\in S_e(\partial h_1,\partial h_2)$ and use well-known properties of the orthogonal projection to obtain
\begin{equation*}
\begin{split}
\norm{(z_k,w_k)-(z^\ast,w^\ast)}^2 \leq  &\norm{(z_{k-1},w_{k-1})-(z^\ast,w^\ast)}^2 + \left(1-\dfrac{2}{\rho_k}\right)\norm{(z_k,w_k)-(z_{k-1},w_{k-1})}^2\\
 = & \norm{(z_{k-1},w_{k-1})-(z^\ast,w^\ast)}^2\\
& \qquad\qquad - \rho_k(2-\rho_k)\gamma_k^2\norm{(Mu_k+Cv_k-d,\lambda(w_{k-1}-Mu_k))}^2,
\end{split}
\end{equation*}
for $k=1,2,\dots$. Thus, applying the inequality above recursively, we have
\begin{equation}
\label{eq:est z_k}
\begin{split}
\norm{(z_k,w_k)-(z^\ast,w^\ast)}^2 \leq & \norm{(z_0,w_0)-(z^\ast,w^\ast)}^2 \\
&\quad - \sum_{j=1}^k\rho_j(2-\rho_j)\gamma_j^2\norm{(Mu_j+Cv_j-d,\lambda(w_{j-1}-Mu_j))}^2.
\end{split}
\end{equation}
We rearrange terms in the equation above and notice that $\lambda(w_{j-1}-Mu_j)=x_j-y_j$, which yields
\begin{align}
\nonumber
\sum_{j=1}^k\rho_j(2-\rho_j)\gamma_j^2\norm{(Mu_j+Cv_j-d,x_j-y_j)}^2 & \leq \norm{(z_0,w_0)-(z^\ast,w^\ast)}^2 - \norm{(z_k,w_k)-(z^\ast,w^\ast)}^2\\
\label{eq:sum dist}
& \leq \norm{(z_0,w_0)-(z^\ast,w^\ast)}^2.
\end{align}
Taking $(z^\ast,w^\ast)$ to be the orthogonal projection of $(z_0,w_0)$ onto $S_e(\partial h_1,\partial h_2)$ in inequality \eqref{eq:sum dist}, we obtain
\begin{equation}
\label{eq:sum est dist}
\sum_{j=1}^k\rho_j(2-\rho_j)\gamma_j^2\norm{(Mu_j+Cv_j-d,x_j-y_j)}^2  \leq d_0^2.
\end{equation}
Now, for $i$ such that 
\begin{equation*}
i\in\arg\min_{j=1,\dots,k}\left(\norm{(Mu_j+Cv_j-d,x_j-y_j)}^2\right),
\end{equation*}
we use inequality \eqref{eq:sum est dist} and the fact that $\rho_j\in[1-\overline{\rho},1+\overline{\rho}]$ to conclude that
\begin{equation}
\label{eq:teo complexity point1}
\norm{Mu_i+Cv_i-d}^2+\norm{x_i-y_i}^2 \leq \frac{d_0^2}{(1-\overline{\rho})^2\sum\limits_{j=1}^k\gamma_j^2}.
\end{equation}

%Next, we notice that $\gamma_j$, in step 2 of the PMM, can be rewritten as
Next, we notice that Proposition \ref{Prop:conv}, together with the equality in \eqref{eq:gamma_k}, implies
\begin{align}
\label{eq:rew-gam}
\gamma_j = \frac{\phi_j(z_{j-1},w_{j-1})}{\norm{\nabla\phi_j}^2},\qquad\qquad\text{for }\,j=1,\dots,k,
%\frac{\lambda\norm{Cv_j-d+w_{j-1}}^2 + \frac{\lambda}{2}\left(\norm{d-Cv_j-Mu_j}^2+\norm{w_{j-1}-Mu_{j}}^2-\norm{d-Cv_j-%w_{j-1}}^2\right)}{\norm{Mu_j+Cv_j-d}^2+\lambda^2\norm{Mu_{j}-w_{j-1}}^2},
\end{align}
where $\phi_j$ is the affine function given in \eqref{eq:phi_k} associated with $x_j$, $y_j$, $b_j$ and $a_j$ defined in \eqref{def abxy conv}. Moreover, combining equations \eqref{eq:phi_k}, \eqref{def abxy conv}, \eqref{gradient phi conv} and \eqref{eq:z-y} we have
\begin{equation*}
\begin{split}
\phi_j(z_{j-1},w_{j-1}) = &\, \lambda\norm{Cv_j-d+w_{j-1}}^2 + \lambda\inpr{d-Cv_j-Mu_j}{w_{j-1}-Mu_j}\\
= &\, \dfrac\lambda2\norm{Cv_j-d+w_{j-1}}^2 + \frac{\lambda}{2}\left(\norm{d-Cv_j-Mu_j}^2+\norm{w_{j-1}-Mu_{j}}^2\right).
\end{split}
\end{equation*}
Hence, we substitute the relation above into \eqref{eq:rew-gam} to obtain
\begin{equation}
\label{eq:rewritten gamma_j}
\begin{split}
\gamma_j & =  \frac{\lambda\norm{Cv_j-d+w_{j-1}}^2}{2\norm{\nabla\phi_j}^2} + \frac{\lambda\norm{d-Cv_j-Mu_j}^2+\lambda\norm{w_{j-1}-Mu_{j}}^2}{2\norm{\nabla\phi_j}^2}\\
& \geq \frac{\lambda\norm{d-Cv_j-Mu_j}^2+\lambda\norm{w_{j-1}-Mu_{j}}^2}{2\norm{\nabla\phi_j}^2}.
\end{split}
\end{equation}
Now, we use the following estimate 
\begin{align*}
\lambda\norm{d-Cv_j-Mu_j}^2+\lambda\norm{w_{j-1}-Mu_{j}}^2 & =\lambda\norm{d-Cv_j-Mu_j}^2+ \frac{1}{\lambda}\lambda^2\norm{w_{j-1}-Mu_{j}}^2\\
& \geq \tau (\norm{d-Cv_j-Mu_j}^2 + \lambda^2\norm{w_{j-1}-Mu_{j}}^2)\\
& = \tau \norm{\nabla\phi_j}^2,
\end{align*}
and the inequality in \eqref{eq:rewritten gamma_j} to deduce that
\begin{equation}
\label{eq:bound gamma_j}
\gamma_j \geq \frac{\tau}{2},\qquad\qquad\text{for }\,j=1,\dots,k.
\end{equation}
This last inequality, together with \eqref{eq:teo complexity point1}, implies
\begin{equation*}
\norm{Mu_i+Cv_i-d}^2+\norm{x_i-y_i}^2 \leq \frac{4d_0^2}{(1-\overline{\rho})^2\tau^2k},
\end{equation*}
from which the theorem follows.
\end{proof}

We now develop alternative complexity bounds for the PMM, which we call \emph{ergodic} complexity bounds. We define a sequence of ergodic iterates as weighted averages of the iterates and derive a convergence rate for the PMM, which as before, is obtained from estimates of the residuals for the KKT conditions associated with these ergodic sequences. 

The idea of considering averages of the iterates in the analysis of the convergence rate for methods for solving problem \eqref{eq:convex opt prob} has been already used in other works. For instance, in \cite{MonSva2013,HeYua2012} it was shown a worst-case $\mathcal{O}(1/k)$ convergence rate for the ADMM in the ergodic sense. 

The sequences of ergodic means $\{\overline{u}_k\}$, $\{\overline{v}_k\}$, $\{\overline{x}_k\}$ and $\{\overline{y}_k\}$ associated with $\{u_k\}$, $\{v_k\}$, $\{x_k\}$ and $\{y_k\}$, respectively, are defined as
\begin{equation}
\label{defi uv erg}
\begin{split}
&\overline{u}_k=\frac{1}{\Gamma_k}\sum_{j=1}^k\rho_j\gamma_ju_j,\qquad\qquad\overline{v}_k=\frac{1}{\Gamma_k}\sum_{j=1}^k\rho_j\gamma_jv_j,\\
&\overline{x}_k =\frac{1}{\Gamma_k} \sum_{j=1}^k\rho_j\gamma_jx_j,\qquad\qquad\overline{y}_k =\frac{1}{\Gamma_k} \sum_{j=1}^k\rho_j\gamma_jy_j,
\end{split}
\qquad\quad\text{where }\,\Gamma_k=\sum_{j=1}^k\rho_j\gamma_j.
\end{equation}

\begin{lemma}\label{lem:inclus erg}
For all integer $k\geq1$ define 
\begin{equation}
\label{eq:def epsilon ergodic}
\begin{split}
& \overline{\epsilon}_k^u = \frac{1}{\Gamma_k}\sum_{j=1}^k\rho_j\gamma_j\inpr{u_j-\overline{u}_k}{-M^\ast y_j},\qquad\qquad \overline{\epsilon}_k^v = \frac{1}{\Gamma_k}\sum_{j=1}^k\rho_j\gamma_j\inpr{v_j-\overline{v}_k}{-C^\ast x_j}.
\end{split}
\end{equation}
Then, \, $\overline{\epsilon}_k^v\geq0$,\, $\overline{\epsilon}_k^u\geq0$\, and 
\begin{equation}
\label{inclus fg erg}
0\in\partial_{\overline{\epsilon}_{k}^v}g(\overline{v}_k) + C^\ast\overline{x}_k,\qquad\qquad 0\in\partial_{\overline{\epsilon}_{k}^u}f(\overline{u}_k) + M^\ast\overline{y}_k.
\end{equation}
\end{lemma}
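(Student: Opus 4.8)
The plan is to apply Proposition \ref{Prop:subdifferential}(d) twice, once to the family of inclusions for $g$ and once to the family for $f$. The essential observation is that the two inclusions in \eqref{eq:inclusions fg}, established in \eqref{eq:stopping step 2-2}, provide exactly the hypotheses of Proposition \ref{Prop:subdifferential}(d) with $\epsilon_j = 0$: for each index $j = 1,\dots,k$ we have $-C^\ast x_j \in \partial g(v_j)$ and $-M^\ast y_j \in \partial f(u_j)$, which are genuine (exact) subgradients. The convex weights will be taken to be $\alpha_j = \rho_j\gamma_j/\Gamma_k$; these are nonnegative because $\gamma_j \geq 0$ by \eqref{eq:gamma_k} and $\rho_j \in [1-\overline{\rho},1+\overline{\rho}] \subseteq (0,2)$, and they sum to $1$ by the definition of $\Gamma_k$ in \eqref{defi uv erg}.

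First I would handle the inclusion for $g$. Setting $x_j^{\text{(d)}} = v_j$ and $v_j^{\text{(d)}} = -C^\ast x_j$ in the notation of Proposition \ref{Prop:subdifferential}(d), the averaged primal point is $\sum_j \alpha_j v_j = \overline{v}_k$ and the averaged subgradient is $\sum_j \alpha_j(-C^\ast x_j) = -C^\ast \overline{x}_k$, using linearity of $C^\ast$ and the definition \eqref{defi uv erg} of $\overline{x}_k$. The resulting error term is
\begin{equation*}
\overline{\epsilon} = \sum_{j=1}^k \alpha_j\bigl(0 + \inpr{v_j-\overline{v}_k}{-C^\ast x_j}\bigr) = \frac{1}{\Gamma_k}\sum_{j=1}^k \rho_j\gamma_j\inpr{v_j-\overline{v}_k}{-C^\ast x_j} = \overline{\epsilon}_k^v,
\end{equation*}
which matches the definition in \eqref{eq:def epsilon ergodic}. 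Proposition \ref{Prop:subdifferential}(d) then yields both $\overline{\epsilon}_k^v \geq 0$ and $-C^\ast\overline{x}_k \in \partial_{\overline{\epsilon}_k^v} g(\overline{v}_k)$, which is the left inclusion in \eqref{inclus fg erg}. The argument for $f$ is entirely symmetric: take $x_j^{\text{(d)}} = u_j$ and $v_j^{\text{(d)}} = -M^\ast y_j$, whence the averages become $\overline{u}_k$ and $-M^\ast \overline{y}_k$ by linearity of $M^\ast$, the error term reduces to $\overline{\epsilon}_k^u$, and Proposition \ref{Prop:subdifferential}(d) delivers $\overline{\epsilon}_k^u \geq 0$ together with the right inclusion in \eqref{inclus fg erg}.

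I do not anticipate a genuine obstacle here; the lemma is essentially a direct packaging of Proposition \ref{Prop:subdifferential}(d). The only points requiring care are bookkeeping ones: verifying that the convex-combination weights are legitimate (nonnegativity of $\gamma_j$ and positivity of $\rho_j$, then normalization by $\Gamma_k$), and confirming that pulling the linear operators $C^\ast$ and $M^\ast$ through the weighted sums produces precisely $C^\ast\overline{x}_k$ and $M^\ast\overline{y}_k$ rather than some other average. The nonnegativity claims $\overline{\epsilon}_k^u,\overline{\epsilon}_k^v\geq0$ come for free from part (d) and need no separate estimate, so the proof is short.
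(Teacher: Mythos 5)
Your proposal is correct and follows essentially the same route as the paper's proof: both deduce the exact inclusions $-C^\ast x_j\in\partial g(v_j)$ and $-M^\ast y_j\in\partial f(u_j)$ from \eqref{eq:inclusions fg} and then apply Proposition \ref{Prop:subdifferential}(d) with weights $\rho_j\gamma_j/\Gamma_k$, using linearity of $C^\ast$ and $M^\ast$ to identify the averaged subgradients with $-C^\ast\overline{x}_k$ and $-M^\ast\overline{y}_k$ and the error terms with $\overline{\epsilon}_k^v$ and $\overline{\epsilon}_k^u$. The only difference is that you spell out the bookkeeping (nonnegativity of the weights via $\gamma_j\geq0$ from \eqref{eq:gamma_k} and $\rho_j>0$, and normalization by $\Gamma_k$) that the paper leaves implicit.
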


\begin{proof}
From inclusions in \eqref{eq:inclusions fg} we have
\begin{equation*}
-C^\ast x_k\in\partial g(v_k)\qquad\text{ and }\qquad -M^\ast y_k\in\partial f(u_k).
\end{equation*}
Thus, the assertion that $\overline{\epsilon}_k^v\geq0$ and the first inclusion in \eqref{inclus fg erg} are a direct consequence of the first inclusion in the equation above, the definitions of $\overline{x}_k$, $\overline{v}_k$ and $\overline{\epsilon}_k^v$, the fact that $C^\ast$ is a linear operator and Proposition \ref{Prop:subdifferential}(d). 

Similarly, the second inclusion in \eqref{inclus fg erg} and the fact that $\overline{\epsilon}_k^u\geq0$ follow from the definitions of $\overline{y}_k$, $\overline{u}_k$ and $\overline{\epsilon}_k^u$, linearity of the $M^\ast$ operator, the second inclusion in relation above and Proposition \ref{Prop:subdifferential}(d).
\end{proof}

According to Lemma \ref{lem:inclus erg}, if  $\norm{\overline{r}^p_k}=\norm{\overline{r}^d_k}=0$ and $\overline{\epsilon}_k^u=\overline{\epsilon}_k^v=0$, where $\overline{r}_k^p = M\overline{u}_k + C\overline{v}_k - d$ and $\overline{r}_k^d = \overline{x}_k - \overline{y}_k$; then it follows that $(\overline{u}_k,\overline{v}_k,\overline{x}_k)$ satisfies the KKT conditions and, consequently, it is a  saddle point of the Lagrangian function. 
Thus, we have computable residuals for the sequence of ergodic means, i.e. the residual vector $(\overline{r}_k^p,\overline{r}^d_k,\overline{\epsilon}_k^u,\overline{\epsilon}_k^v)$, and we can attempt to construct bounds on its size.

For this purpose, we first prove the following technical result.
%that will be useful for deriving the desired convergence rate. 
It establishes an estimate for the quantity $\overline{\epsilon}_k^u+\overline{\epsilon}_k^v$.

\begin{lemma}
Let $\{u_{k}\}$, $\{v_k\}$, $\{z_k\}$, $\{w_k\}$, $\{\gamma_k\}$ and $\{\rho_k\}$ be the sequences generated by the PMM and $\{x_k\}$, $\{y_k\}$ be defined in \eqref{def abxy conv}. Define also the sequences of ergodic iterates $\{\overline{u}_k\}$, $\{\overline{v}_k\}$, $\{\overline{x}_{k}\}$, $\{\overline{y}_{k}\}$, $\{\overline{\epsilon}_{k}^u\}$ and $\{\overline{\epsilon}_{k}^v\}$ as in \eqref{defi uv erg} and \eqref{eq:def epsilon ergodic}. Then, for every integer $k\geq1$, we have
\begin{equation}
\label{compl_erg_ep}
\overline{\epsilon}_{k}^u + \overline{\epsilon}_{k}^v \leq \frac{1}{\Gamma_{k}}\left[\frac{1}{\Gamma_{k}}\sum_{j=1}^{k}\rho_{j}\gamma_{j}\left(\lambda^2\norm{Mu_j+Cv_j-d}^{2}+\norm{d-Cv_j-w_{j-1}}^2\right)+4d_0^2\right].
\end{equation}
\end{lemma}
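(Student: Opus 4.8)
The plan is to reduce the asserted bound to a statement about weighted covariances of the resolvent data and then control those via the projection estimates already available.

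First I would put $\Gamma_k(\overline{\epsilon}_k^u + \overline{\epsilon}_k^v)$ into closed form. Using the exact inclusions $-C^\ast x_j \in \partial g(v_j)$ and $-M^\ast y_j \in \partial f(u_j)$ from \eqref{eq:inclusions fg}, together with the identities $b_j = d - Cv_j$ and $a_j = -Mu_j$ of \eqref{def abxy conv} (so that $\overline{b}_k = d - C\overline{v}_k$ and $\overline{a}_k = -M\overline{u}_k$), the inner products defining $\overline{\epsilon}_k^v$ and $\overline{\epsilon}_k^u$ turn into $\inpr{b_j - \overline{b}_k}{x_j}$ and $\inpr{a_j - \overline{a}_k}{y_j}$. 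After centering (subtracting the terms $\sum_j \rho_j\gamma_j(b_j - \overline{b}_k) = 0$, and likewise for $a$) this yields the covariance identities
\[
\Gamma_k\overline{\epsilon}_k^v = \sum_{j=1}^k \rho_j\gamma_j\inpr{x_j - \overline{x}_k}{b_j - \overline{b}_k}, \qquad \Gamma_k\overline{\epsilon}_k^u = \sum_{j=1}^k \rho_j\gamma_j\inpr{y_j - \overline{y}_k}{a_j - \overline{a}_k}.
\]

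Next I would exploit the shifted covariance identity $\sum_j \rho_j\gamma_j\inpr{p_j - \overline{p}_k}{q_j - \overline{q}_k} = \sum_j \rho_j\gamma_j\inpr{p_j - \hat{p}}{q_j - \hat{q}} - \Gamma_k\inpr{\overline{p}_k - \hat{p}}{\overline{q}_k - \hat{q}}$, valid for any fixed $(\hat{p},\hat{q})$, shifting at a point $(z^\ast,w^\ast)\in\exset{\partial h_1}{\partial h_2}$ chosen as the orthogonal projection of $(z_0,w_0)$ onto the extended solution set, so that $\norm{(z_0,w_0)-(z^\ast,w^\ast)} = d_0$. Recalling $\phi_j(z,w) = \inpr{z - x_j}{b_j - w} + \inpr{z - y_j}{a_j + w}$, one checks that the two per-term brackets combine to $-\phi_j(z^\ast,w^\ast)$ while the two subtracted products combine to $-\psi_k(z^\ast,w^\ast)$, where $\psi_k$ is the affine function built from the ergodic data $\overline{x}_k,\overline{y}_k,\overline{a}_k,\overline{b}_k$. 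This gives the master identity
\[
\Gamma_k(\overline{\epsilon}_k^u + \overline{\epsilon}_k^v) = -\sum_{j=1}^k \rho_j\gamma_j\,\phi_j(z^\ast,w^\ast) + \Gamma_k\,\psi_k(z^\ast,w^\ast).
\]

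Two bounds then remain. The first term I would control by the refined Fej\'er recursion: expanding $\norm{(z_j,w_j)-(z^\ast,w^\ast)}^2$, using $\phi_j(z_{j-1},w_{j-1}) = \gamma_j\norm{\nabla\phi_j}^2$ with $\rho_j\in(0,2)$ and $\phi_j(z^\ast,w^\ast)\le 0$ (since $\exset{\partial h_1}{\partial h_2}\subseteq H_{\phi_j}$), and summing the telescope exactly as in the derivation of \eqref{eq:sum est dist}, to obtain $-\sum_j \rho_j\gamma_j\phi_j(z^\ast,w^\ast)\le d_0^2/2$. For the second term I would use the resolvent relations $x_j + \lambda b_j = z_{j-1}+\lambda w_{j-1}$ and $z^\ast + \lambda w^\ast$ (valid because $w^\ast\in\partial h_2(z^\ast)$) to rewrite $\inpr{\overline{x}_k - z^\ast}{\overline{b}_k - w^\ast}$, and its $a,y$ analogue, as a squared-norm term minus a cross term, then apply Young's inequality calibrated to $\lambda$; the residuals $\norm{d - Cv_j - w_{j-1}}^2 = \norm{b_j - w_{j-1}}^2$ and $\lambda^2\norm{Mu_j + Cv_j - d}^2 = \lambda^2\norm{a_j + b_j}^2$ appear precisely here, and the leftover inner products are controlled by the gradient telescope $\sum_j \rho_j\gamma_j\nabla\phi_j = (z_0 - z_k,\, w_0 - w_k)$ together with the Fej\'er bound $\norm{(z_j,w_j)-(z^\ast,w^\ast)}\le d_0$.

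The main obstacle is exactly this last estimate for $\Gamma_k\psi_k(z^\ast,w^\ast)$. A naive Cauchy--Schwarz bound on the ergodic products only gives $O(\Gamma_k d_0^2)$, which is off by a factor of $\Gamma_k$; the quantity is in fact $O(d_0^2/\Gamma_k)$ owing to a cancellation forced by monotonicity of $\partial h_1$ and $\partial h_2$. The delicate point is therefore to split the products through the resolvent identity so that only the genuinely small combinations survive, to choose the Young parameter so that the produced squared norms match the residuals of the statement with the right coefficients, and to arrange the remaining distance terms so that they telescope to a $k$-independent constant --- which is what yields the additive $4d_0^2$ rather than a bound growing with $k$.
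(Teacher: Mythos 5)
Your opening reductions are sound: the covariance identities $\Gamma_k\overline{\epsilon}_k^v=\sum_{j=1}^k\rho_j\gamma_j\inpr{x_j-\overline{x}_k}{b_j-\overline{b}_k}$ and $\Gamma_k\overline{\epsilon}_k^u=\sum_{j=1}^k\rho_j\gamma_j\inpr{y_j-\overline{y}_k}{a_j-\overline{a}_k}$ are correct, the shift to $(z^\ast,w^\ast)$ does produce your master identity, and the telescoping argument does give $-\sum_j\rho_j\gamma_j\phi_j(z^\ast,w^\ast)\le d_0^2/2$. But the proof stops exactly where it must begin: the estimate of $\Gamma_k\psi_k(z^\ast,w^\ast)$ is only announced as a plan, and you yourself flag it as the main obstacle. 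That term is genuinely circular as posed: since $\overline{b}_k\in\partial_{\overline{\epsilon}_k^v}h_2(\overline{x}_k)$ and $\overline{a}_k\in\partial_{\overline{\epsilon}_k^u}h_1(\overline{y}_k)$, monotonicity yields only $\psi_k(z^\ast,w^\ast)\le\overline{\epsilon}_k^u+\overline{\epsilon}_k^v$, which fed back into your identity is vacuous, so everything rests on the quantitative bound you leave open. Worse, the natural execution of your sketch does not reproduce the statement: the gradient telescope gives $\Gamma_k\psi_k(z^\ast,w^\ast)=\inpr{z^\ast-\overline{x}_k}{z_0-z_k}+\inpr{w_0-w_k}{\overline{a}_k+w^\ast}$, and after splitting via $x_j-z_{j-1}=\lambda(w_{j-1}-b_j)$ and $a_j+w^\ast=(a_j+b_j)+(w^\ast-w_{j-1})+(w_{j-1}-b_j)$ with the Fej\'er bounds, the Young step that manufactures the weighted residuals $\lambda^2\norm{a_j+b_j}^2$ and $\norm{b_j-w_{j-1}}^2$ of \eqref{compl_erg_ep} leaves behind constants of order $\lambda^2d_0^2$ and $d_0^2/\lambda^2$ --- not the $\lambda$-free $4d_0^2$. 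So even granting the plan, it delivers a weaker, $\lambda$-dependent variant; as written it proves nothing beyond the first term's bound.

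The repair is to not shift at a fixed point at all. Your telescoping identity is valid for \emph{every} $(z,w)$, not only for points of $\exset{\partial h_1}{\partial h_2}$: expanding $\norm{(z,w)-(z_{j-1},w_{j-1})}^2$ along $(z_j,w_j)=(z_{j-1},w_{j-1})-\rho_j\gamma_j\nabla\phi_j$ and using \eqref{eq:gamma_k} gives
\begin{equation*}
-\sum_{j=1}^k\rho_j\gamma_j\phi_j(z,w)\;\le\;\frac{1}{2}\norm{(z,w)-(z_0,w_0)}^2\qquad\forall(z,w)\in\R^n\times\R^n.
\end{equation*}
Meanwhile your \emph{un-shifted} covariance identity says precisely that $\Gamma_k\bigl(\overline{\epsilon}_k^u+\overline{\epsilon}_k^v\bigr)=-\sum_{j=1}^k\rho_j\gamma_j\phi_j\left(\overline{y}_k,d-C\overline{v}_k\right)$, i.e.\ the whole quantity is the $\phi$-sum evaluated at the \emph{moving ergodic point} --- which is how the paper proceeds. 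Applying the universal inequality at $(z,w)=(\overline{y}_k,d-C\overline{v}_k)$, then convexity of $\norm{\cdot}^2$ to pass inside the ergodic average, $\norm{p+q}^2\le2\norm{p}^2+2\norm{q}^2$ to split off $(z_{j-1},w_{j-1})$, and finally $\norm{(z_{j-1},w_{j-1})-(z_0,w_0)}\le2d_0$ from \eqref{eq:est z_kw_k}, yields \eqref{compl_erg_ep} exactly, since $(y_j,d-Cv_j)-(z_{j-1},w_{j-1})=\bigl(\lambda(Mu_j+Cv_j-d),\,d-Cv_j-w_{j-1}\bigr)$ and $\sum_{j=1}^k\rho_j\gamma_j/\Gamma_k=1$; no $\psi_k$ term, no Young calibration, and the constant $4d_0^2$ comes out clean.
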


\begin{proof}
We first show that 
\begin{equation}
\label{eq:rewriten epsilon}
\overline{\epsilon}_{k}^u + \overline{\epsilon}_{k}^v = -\frac{1}{\Gamma_k}\sum_{j=1}^k\rho_j\gamma_j\phi_j(\overline{y}_k,d-C\overline{v}_k).
\end{equation}
By the definitions of $\phi_j$, $b_j$ and $a_j$, we have
\begin{align}
\nonumber
\phi_j(\overline{y}_k,d-C\overline{v}_k) & = \inpr{\overline{y}_k-x_j}{C\overline{v}_k-Cv_j}+\inpr{\overline{y}_k-y_j}{d-C\overline{v}_k-Mu_j}\\
\label{eq:phi_j 1}
& = -\inpr{\overline{y}_k}{Cv_j}-\inpr{x_j}{C\overline{v}_k-Cv_j}+\inpr{\overline{y}_k-y_j}{d} +\inpr{y_j}{C\overline{v}_k}-\inpr{\overline{y}_k-y_j}{Mu_j}.
\end{align}
We use the definitions of $\overline{y}_k$, $\overline{v}_k$, $\Gamma_k$ and the fact that $C$ is a linear map, to obtain
\begin{equation*}
\frac{1}{\Gamma_k}\sum_{j=1}^k\rho_j\gamma_j\left(\inpr{\overline{y}_k-y_j}{d}-\inpr{\overline{y}_k}{Cv_j}+\inpr{y_j}{C\overline{v}_k}\right) = \inpr{\overline{y}_k-\overline{y}_k}{d} -\inpr{\overline{y}_k}{C\overline{v}_k} + \inpr{\overline{y}_k}{C\overline{v}_k} = 0.
\end{equation*}
Now, multiplying \eqref{eq:phi_j 1} by $\rho_j\gamma_j/\Gamma_k$, adding from $j=1$ to $k$ and combining with the relation above, we conclude that 
\begin{equation}
\label{eq:sum phi_k}
\frac{1}{\Gamma_k}\sum_{j=1}^k\rho_j\gamma_j\phi_j(\overline{y}_k,d-C\overline{v}_k) = - \frac{1}{\Gamma_k}\sum_{j=1}^k\rho_j\gamma_j\left(\inpr{x_j}{C\overline{v}_k-Cv_j}+\inpr{\overline{y}_k-y_j}{Mu_j}\right).
\end{equation}
Next, we observe that 
\begin{equation*}
\begin{split}
\overline{\epsilon}_{k}^u + \overline{\epsilon}_{k}^v & = \frac{1}{\Gamma_k}\sum_{j=1}^k\rho_j\gamma_j\left(\inpr{M\overline{u}_k-Mu_j}{y_j}+\inpr{C\overline{v}_k-Cv_j}{x_j}\right)\\
& = \frac{1}{\Gamma_k}\sum_{j=1}^k\rho_j\gamma_j\left(\inpr{Mu_j}{\overline{y}_k-y_j} + \inpr{C\overline{v}_k-Cv_j}{x_j}\right),
\end{split}
\end{equation*}
where the last equality above is a consequence of the definitions of $\overline{y}_k$ and $M\overline{u}_k$. We deduce formula \eqref{eq:rewriten epsilon} combining the equation above with \eqref{eq:sum phi_k}.

For an arbitrary $(z,w)\in\R^n\times\R^n$ and all integer $j\geq1$ we have
\begin{align}
\nonumber
\dfrac{1}{2}\norm{(z,w)-(z_{j-1},w_{j-1})}^2 = & \,\dfrac{1}{2}\norm{(z,w)-(z_j,w_j)}^2 + \inpr{(z,w)-(z_j,w_j)}{(z_j,w_j)-(z_{j-1},w_{j-1})}\\
\nonumber
& + \dfrac{1}{2}\norm{(z_j,w_j)-(z_{j-1},w_{j-1})}^2\\
\label{eq:eq 1}
= & \, \dfrac{1}{2}\norm{(z,w)-(z_j,w_j)}^2-\rho_j\gamma_j\inpr{(z,w)-(z_j,w_j)}{\nabla\phi_j} + \dfrac{1}{2}\rho_j^2\gamma_j^2\norm{\nabla\phi_j}^2,
\end{align}
where the second equality follows from the identity $(z_j,w_j)=(z_{j-1},w_{j-1})-\rho_j\gamma_j\nabla\phi_j$, which is a consequence of step 3 in the PMM, \eqref{gradient phi conv} and \eqref{eq:gradient ph_k}. Now, we notice that
\begin{equation*}
\begin{split}
\inpr{(z,w)-(z_j,w_j)}{\nabla\phi_j} & = \inpr{(z,w)-(y_j,d-Cv_j)}{\nabla\phi_j}+\inpr{(y_j,d-Cv_j)-(z_j,w_j)}{\nabla\phi_j}\\
& = \phi_j(z,w) - \phi_j(z_j,w_j)\\
& = \phi_j(z,w) - \phi_j((z_{j-1},w_{j-1})-\rho_j\phi_j\nabla\phi_j)\\
& = \phi_j(z,w) - \phi_j(z_{j-1},w_{j-1}) + \rho_j\phi_j\norm{\nabla\phi_j}^2,
\end{split}
\end{equation*}
where the second and forth equalities are due to \eqref{eq:phi_k}, \eqref{eq:gradient ph_k} and \eqref{def abxy conv}. Substituting the equation above into \eqref{eq:eq 1} yields
\begin{equation*}
\begin{split}
\dfrac{1}{2}\norm{(z,w)-(z_{j-1},w_{j-1})}^2  = & \,\dfrac{1}{2}\norm{(z,w)-(z_j,w_j)}^2 - \rho_j\gamma_j\phi_j(z,w)  + \rho_j\gamma_j\phi_j(z_{j-1},w_{j-1}) \\
& - \rho_j^2\gamma_j^2\norm{\nabla\phi_j}^2 + \dfrac{1}{2}\rho_j^2\gamma_j^2\norm{\nabla\phi_j}^2\\
 = & \, \dfrac{1}{2}\norm{(z,w)-(z_j,w_j)}^2 - \rho_j\gamma_j\phi_j(z,w) + \rho_j\gamma_j^2\norm{\nabla\phi_j}^2 - \dfrac{1}{2}\rho_j^2\gamma_j^2\norm{\nabla\phi_j}^2,
% = & \dfrac{1}{2}\norm{(z,w)-(z_j,w_j)}^2 - \rho_j\gamma_j\phi_j(z,w)\\ + \dfrac{1}{2}\rho_j(2-\rho_j)\gamma_j^2\norm{\nabla\phi_j}^2.
\end{split}
\end{equation*}
where formula \eqref{eq:gamma_k} is used for obtaining the last equality. Rearranging terms in the equation above and adding from $j=1$ to $k$, we obtain
\begin{equation*}
-\sum_{j=1}^k\rho_j\gamma_j\phi_j(z,w) = \dfrac{1}{2}\norm{(z,w)-(z_{0},w_{0})}^2 - \dfrac{1}{2}\norm{(z,w)-(z_k,w_k)}^2 - \sum_{j=1}^k\dfrac{1}{2}\rho_j(2-\rho_j)\gamma_j^2\norm{\nabla\phi_j}^2.
\end{equation*}
Consequently, we have
\begin{equation*}
-\sum_{j=1}^k\rho_j\gamma_j\phi_j(z,w) \leq \dfrac{1}{2}\norm{(z,w)-(z_{0},w_{0})}^2,\qquad\qquad\forall(z,w)\in\R^n\times\R^n.
\end{equation*}
Now we use inequality above with $(z,w)=(\overline{y}_k,d-C\overline{v}_k)$, and combine with \eqref{eq:rewriten epsilon}, to obtain
\begin{equation}
\label{eq:12}
\begin{split}
\overline{\epsilon}_k^u+\overline{\epsilon}_k^v \leq & \dfrac{1}{2\Gamma_k}\norm{(\overline{y}_k,d-C\overline{v}_k)-(z_{0},w_{0})}^2\\
\leq & \dfrac{1}{2\Gamma_k}\sum_{j=1}^k\dfrac{\rho_j\gamma_j}{\Gamma_k}\norm{(y_{j},d-Cv_j)-(z_0,w_0)}^2\\
\leq & \dfrac{1}{\Gamma_k}\sum_{j=1}^k\dfrac{\rho_j\gamma_j}{\Gamma_k}\left(\norm{(y_{j},d-Cv_j)-(z_{j-1},w_{j-1})}^2 + \norm{(z_{j-1},w_{j-1})-(z_0,w_0)}^2\right),
\end{split}
\end{equation}
where the second inequality above is due to the definitions of $\overline{y}_k$, $\overline{v}_k$, the fact that $C$ is a linear operator and the convexity of $\norm{\cdot}^2$. Further, the third inequality in equation above is obtained using the triangle inequality for norms.
Next, we notice that inequality \eqref{eq:est z_k} implies
\begin{equation*}
\norm{(z_j,w_j)-(z^\ast,w^\ast)} \leq \norm{(z^\ast,w^\ast)-(z_0,w_0)},
\end{equation*}
for all integers $j\geq0$ and all $(z^\ast,w^\ast)\in S_e(\partial h_1,\partial h_2)$. Taking $(z^\ast,w^\ast)$ to be the orthogonal projection of $(z_0,w_0)$ onto $S_e(\partial h_1,\partial h_2)$ in the relation above and using the triangle inequality, we deduce that
\begin{equation}
\label{eq:est z_kw_k}
\norm{(z_j,w_j)-(z_0,w_0)} \leq \norm{(z_j,w_j)-(z^\ast,w^\ast)} + \norm{(z^\ast,w^\ast)-(z_0,w_0)} \leq 2d_0.
\end{equation}
Combining \eqref{eq:est z_kw_k} with \eqref{eq:12} we have
%\begin{equation*}
%-\Phi_k(y_j,d-Cv_j) \leq \norm{(y_j,d-Cv_j)-(z_{j-1},w_{j-1})}^2 + 4d_0^2
%\end{equation*}
%Finally, multiplying the relation above by $(\rho_j\gamma_j)/\Gamma_k$, adding from $j=1$ to $k$ and using \eqref{eq:rewriten epsilon 1}, we conclude that
\begin{equation*}
\overline{\epsilon}_{k}^u + \overline{\epsilon}_{k}^v \leq \frac{1}{\Gamma_k}\left[\frac{1}{\Gamma_k}\sum_{j=1}^k\rho_j\gamma_j\norm{(y_j,d-Cv_j)-(z_{j-1},w_{j-1})}^2 + 4d_0^2\right].
\end{equation*}
To end the proof we substitute the identity $y_j-z_{j-1}=\lambda(Mu_j+Cv_j-d)$, which follows from the definition of $y_j$ in \eqref{def abxy conv}, into the above inequality.
\end{proof}

The following theorem provides estimates for the quality of the measure of the ergodic means $\overline{u}_k$, $\overline{v}_k$, $\overline{x}_k$ and $\overline{y}_k$. More specifically, we show that the residuals associated with the ergodic sequences are $\mathcal{O}(1/k)$.
\begin{theorem}
Assume the hypotheses of Theorem \ref{Teo:complexity-point}. Consider also the sequences $\{\overline{u}_k\}$, $\{\overline{v}_k\}$, $\{\overline{x}_k\}$ and $\{\overline{y}_k\}$ given in \eqref{defi uv erg}, and $\{\overline{\epsilon}_k^u\}$, $\{\overline{\epsilon}_k^v\}$ defined in \eqref{eq:def epsilon ergodic}. Then, for all integer $k\geq1$, we have
\begin{equation}
\label{eq:inclusion fg ergodic}
0\in\partial_{\overline{\epsilon}_{k}^v}g(\overline{v}_k) + C^\ast\overline{x}_k,\qquad\qquad 0\in\partial_{\overline{\epsilon}_{k}^u}f(\overline{u}_k) + M^\ast\overline{y}_k,
\end{equation}
and
\begin{align}
\label{estima erg conv}
&\norm{M\overline{u}_k+C\overline{v}_k-d}\leq \frac{4d_0}{k(1-\overline{\rho})\tau},\qquad\qquad\quad \norm{\overline{x}_k-\overline{y}_k}\leq\frac{4d_0}{k(1-\overline{\rho})\tau},\\
\label{eq:est eps erg conv}
& \overline{\epsilon}^u_{k}+\overline{\epsilon}^v_{k}\leq\frac{8d_0^2\vartheta}{k(1-\overline{\rho})\tau};
\end{align}
where $\vartheta=\dfrac{1}{\tau^2(1-\overline{\rho})^2}+1$.
\end{theorem}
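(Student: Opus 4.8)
The plan is to treat the three claims in turn, observing first that the inclusions \eqref{eq:inclusion fg ergodic} are nothing but the conclusion \eqref{inclus fg erg} of Lemma \ref{lem:inclus erg}, so no new argument is needed there. The genuine content is the two $\mathcal{O}(1/k)$ estimates, and the unifying tool for both will be the lower bound $\Gamma_k \geq k(1-\overline{\rho})\tau/2$, which follows at once from $\rho_j \geq 1-\overline{\rho}$ together with the bound $\gamma_j \geq \tau/2$ established in \eqref{eq:bound gamma_j}.

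For the residual bounds \eqref{estima erg conv} I would avoid estimating $\norm{\nabla\phi_j}$ termwise and instead exploit a telescoping identity. Step 3 of the PMM gives $z_j - z_{j-1} = \rho_j\gamma_j(Mu_j+Cv_j-d)$ and, using $x_j-y_j = \lambda(w_{j-1}-Mu_j)$ from \eqref{gradient phi conv}, also $w_j - w_{j-1} = -\rho_j\gamma_j(x_j-y_j)$. Summing over $j=1,\dots,k$ and dividing by $\Gamma_k$, the definitions \eqref{defi uv erg} yield the clean identities
\begin{equation*}
M\overline{u}_k + C\overline{v}_k - d = \frac{z_k - z_0}{\Gamma_k}, \qquad\qquad \overline{x}_k - \overline{y}_k = \frac{w_0 - w_k}{\Gamma_k}.
\end{equation*}
Since $\norm{z_k-z_0}$ and $\norm{w_k-w_0}$ are each at most $\norm{(z_k,w_k)-(z_0,w_0)} \leq 2d_0$ by \eqref{eq:est z_kw_k}, dividing by $\Gamma_k \geq k(1-\overline{\rho})\tau/2$ gives both bounds in \eqref{estima erg conv} immediately.

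For the $\epsilon$-bound \eqref{eq:est eps erg conv} I would start from \eqref{compl_erg_ep} and control its two pieces separately. The additive $4d_0^2$ term, after the outer factor $1/\Gamma_k$, is at most $\tfrac{8d_0^2}{k(1-\overline{\rho})\tau}$, which accounts for the ``$+1$'' in $\vartheta$. For the weighted sum, the key is a termwise estimate: writing $P_j = \norm{Mu_j+Cv_j-d}^2$ and $R_j = \norm{d-Cv_j-w_{j-1}}^2$, the explicit formula for $\phi_j(z_{j-1},w_{j-1})$ derived in the proof of the preceding lemma shows $\phi_j(z_{j-1},w_{j-1}) \geq \tfrac{\lambda}{2}(P_j + R_j)$, after discarding the nonnegative term $\tfrac{\lambda}{2}\norm{w_{j-1}-Mu_j}^2$. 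Combined with the elementary identity $\lambda/\tau = \max\{1,\lambda^2\}$, this yields $\lambda^2 P_j + R_j \leq \tfrac{\lambda}{\tau}(P_j+R_j) \leq \tfrac{2}{\tau}\phi_j(z_{j-1},w_{j-1}) = \tfrac{2}{\tau}\gamma_j\norm{\nabla\phi_j}^2$, the last equality being \eqref{eq:gamma_k}. Hence the sum is dominated by $\tfrac{2}{\tau}\sum_j\rho_j\gamma_j^2\norm{\nabla\phi_j}^2$, and since $2-\rho_j \geq 1-\overline{\rho}$ converts \eqref{eq:sum est dist} into $\sum_j\rho_j\gamma_j^2\norm{\nabla\phi_j}^2 \leq d_0^2/(1-\overline{\rho})$, dividing by $\Gamma_k^2$ produces $\tfrac{8d_0^2}{k^2(1-\overline{\rho})^3\tau^3} \leq \tfrac{8d_0^2}{k(1-\overline{\rho})^3\tau^3}$, exactly the $\tfrac{1}{\tau^2(1-\overline{\rho})^2}$ part of $\vartheta$. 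Adding the two contributions gives \eqref{eq:est eps erg conv}.

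I expect the main obstacle to be the termwise inequality $\lambda^2 P_j + R_j \leq \tfrac{2}{\tau}\phi_j(z_{j-1},w_{j-1})$: it requires resurrecting the explicit expression for $\phi_j(z_{j-1},w_{j-1})$ from the previous proof, discarding precisely the right nonnegative term, and matching the $\lambda$-weighting against $\tau = \min\{\lambda,1/\lambda\}$ through $\lambda/\tau = \max\{1,\lambda^2\}$. Arranging the constants so that the two pieces assemble exactly into $\vartheta = \tau^{-2}(1-\overline{\rho})^{-2} + 1$ is the only delicate bookkeeping; everything else reduces to the telescoping identity and to reusing \eqref{eq:sum est dist} and \eqref{eq:bound gamma_j}.
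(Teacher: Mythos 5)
Your proof is correct and takes essentially the same route as the paper's: your telescoping identities are precisely the paper's \eqref{eq:revritten erg residual}, combined with the same ingredients $\norm{(z_k,w_k)-(z_0,w_0)}\leq 2d_0$ from \eqref{eq:est z_kw_k} and $\Gamma_k\geq k(1-\overline{\rho})\tau/2$ from \eqref{eq:bound gamma_j}, and your termwise estimate $\lambda^2\norm{Mu_j+Cv_j-d}^2+\norm{d-Cv_j-w_{j-1}}^2\leq\frac{2}{\tau}\gamma_j\norm{\nabla\phi_j}^2$ together with \eqref{eq:sum est dist} and $2-\rho_j\geq1-\overline{\rho}$ is exactly how the paper processes \eqref{compl_erg_ep}. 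The only (immaterial) cosmetic difference is that you package the weight-matching as the identity $\lambda/\tau=\max\{1,\lambda^2\}$, where the paper instead applies $\lambda\geq\tau$ and $1/\lambda\geq\tau$ to the two terms separately.
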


\begin{proof}
The inclusions in \eqref{eq:inclusion fg ergodic} were proven in Lemma \ref{lem:inclus erg}. 
To prove the estimates in \eqref{estima erg conv} we first observe that, since
\begin{equation*}
x_k - y_k =\lambda(w_{k-1}-Mu_k)\qquad\qquad\text{ for }\,k=1,2,\dots,
\end{equation*}
by the update rule in step 3 of the PMM we have
\begin{align}
\nonumber
(z_k,w_k) & = (z_{k-1},w_{k-1}) - \rho_k\gamma_k(d-Cv_k-Mu_k,x_k-y_k)\\
\nonumber
& = (z_0,w_0) - \sum_{j=1}^k\rho_j\gamma_j(d-Cv_j-Mu_j,x_j-y_j)\\
\label{eq:revritten erg residual}
& = (z_0,w_0) - \Gamma_k(d-C\overline{v}_k-M\overline{u}_k,\overline{x}_k-\overline{y}_k),
\end{align}
where the last equality above follows from the definitions of $\Gamma_k$, $\overline{v}_k$, $\overline{u}_k$, $\overline{x}_k$ and $\overline{y}_k$ in \eqref{defi uv erg}, and the fact that $M$ and $C$ are linear operators. Therefore, from \eqref{eq:revritten erg residual} we deduce that
\begin{equation*}
\norm{(d-C\overline{v}_k-M\overline{u}_k,\overline{x}_k-\overline{y}_k)} = \frac{1}{\Gamma_k}\norm{(z_0,w_0) - (z_k,w_k)},
\end{equation*}
and combining the identity above with estimate \eqref{eq:est z_kw_k} we obtain
\begin{equation}
\label{eq:bound ergodic residual 1}
\norm{(d-C\overline{v}_k-M\overline{u}_k,\overline{x}_k-\overline{y}_k)} \leq \frac{2d_0}{\Gamma_k}.
\end{equation}
Next, we notice that equation \eqref{eq:bound gamma_j} and the fact that $\rho_j\in[1-\overline{\rho},1+\overline{\rho}]$ imply
\begin{equation}
\label{eq:bound Gamma_k}
\Gamma_k=\sum_{j=1}^k\rho_j\gamma_j\geq\sum_{j=1}^k(1-\overline{\rho})\frac{\tau}{2} = (1-\overline{\rho})\frac{\tau}{2}k.
\end{equation}
The inequality above, together with \eqref{eq:bound ergodic residual 1}, yields
\begin{equation*}
\norm{(d-C\overline{v}_k-M\overline{u}_k,\overline{x}_k-\overline{y}_k)} \leq \frac{4d_0}{(1-\overline{\rho})\tau k},
\end{equation*}
from which the bounds in \eqref{estima erg conv} follow directly.

Now, using the equality in \eqref{eq:rewritten gamma_j} we have
\begin{equation*}
\gamma_j\geq \frac{\lambda\norm{Cv_j-d+w_{j-1}}^2}{2\norm{\nabla\phi_j}^2}+\frac{\lambda\norm{d-Cv_j-Mu_j}^2}{2\norm{\nabla\phi_j}^2},
\end{equation*}
and as a consequence we obtain
\begin{equation*}
\norm{\nabla\phi_j}^2\gamma_j \geq \frac{\tau}{2}\left(\norm{Cv_j-d+w_{j-1}}^2+\lambda^2\norm{d-Cv_j-Mu_j}^2\right),\qquad\text{for } j=1,\dots,k.
\end{equation*}
Multiplying the inequality above by $\rho_j\gamma_j2/\tau$, adding from $j=1$ to $k$ and using \eqref{compl_erg_ep}, we have
\begin{equation*}
\begin{split}
\overline{\epsilon}_k^u+\overline{\epsilon}_k^v & \leq \frac{1}{\Gamma_k}\left[\frac{2}{\tau\Gamma_k}\sum_{j=1}^k\rho_j\gamma_j^2\norm{\nabla\phi_j}^2+4d_0^2\right]\\
& = \frac{1}{\Gamma_k}\left[\frac{2}{\tau\Gamma_k}\sum_{j=1}^k\frac{1}{2-\rho
_j}\rho_j(2-\rho_j)\gamma_j^2\norm{\nabla\phi_j}^2+4d_0^2\right].
\end{split}
\end{equation*}
Finally, relation above, together with \eqref{eq:sum est dist} and the fact that $\rho_j\in[1-\overline{\rho},1+\overline{\rho}]$, yields
\begin{equation*}
\begin{split}
\overline{\epsilon}_k^u+\overline{\epsilon}_k^v \leq \frac{1}{\Gamma_k}\left[\frac{2}{\tau\Gamma_k(1-\overline{\rho})}d_0^2+4d_0^2\right].
\end{split}
\end{equation*}
The bound in \eqref{eq:est eps erg conv} is achieved using this last inequality and \eqref{eq:bound Gamma_k}.
\end{proof}

\section{Applications}
\label{sec:application}
In this section we discuss the specialization of the PMM to two common test problems. First, we consider the total variation model for image denoising (TV denoising). Then, we consider a compressed sensing problem for Magnetic Resonance Imaging. We also exhibit some preliminary numerical experiments to illustrate the performance of the PMM when solving these problems.

\subsection{TV denoising}
\label{subs:tv}
Total variation (TV) or ROF model is a common image model developed by Rudin, Osher and Fatemi \cite{rudin_tv} for the problem of removing noise from an image.
If $b\in\R^{m\times n}$ is an observed noisy image, the TV problem for image denoising estimates the unknown original image $u\in\R^{m\times n}$ by solving the minimization problem
\begin{equation}
\label{eq:tv problem}
\min_{u\in\R^{m\times n}}\zeta TV(u) + \frac{1}{2}\norm{u-b}^2_F,
\end{equation}
where $TV$ is the total variation norm defined as
\begin{equation}
\label{eq:tv-norm}
TV(u) = \norm{\nabla_1u}_1 + \norm{\nabla_2u}_1.
\end{equation}
Here $\nabla_1:\R^{m\times n}\to\R^{m\times n}$ and $\nabla_2:\R^{m\times n}\to\R^{m\times n}$ are the discrete forward gradients in the first and second direction, respectively, given by 
\begin{equation*}
\begin{aligned}
(\nabla_1 u)_{ij} = u_{i+1,j}-u_{i,j},\qquad
(\nabla_2u)_{ij} = u_{i,j+1}-u_{i,j},
\end{aligned}
\qquad i=1,\ldots,m,\,\,j=1,\ldots,n,\,\, u\in\R^{m\times n};
\end{equation*}
and we assume standard reflexive boundary conditions 
\begin{equation*}
%\label{eq:boundary conditions}
u_{m+1,j} - u_{m,j} = 0,\quad j=1,\dots,n\qquad\text{and}\qquad u_{i,n+1} - u_{i,n} = 0,\quad i=1,\dots,m.
\end{equation*}
The regularization parameter $\zeta>0$ controls the tradeoff between fidelity to measurements and the smoothness term given by the total variation. 

To solve the TV problem using the PMM we first have to sate it in the form of a linearly constrained minimization problem \eqref{eq:convex opt prob}. If we define $\Omega:=\R^{m\times n}\times\R^{m\times n}$, and the linear map $\nabla:\R^{m\times n}\to\Omega$ by 
$$\nabla u=(\nabla_1u,\nabla_2u);$$
then, taking $v=\nabla u\in\Omega$, we have that \eqref{eq:tv problem} is equivalent to the optimization problem
\begin{equation}
\label{tv linear constrain}
\min_{(u,v)\in\R^{m\times n}\times\Omega}\set{\zeta\norm{v}_1+\frac{1}{2}\norm{u-b}^{2}_F\,:\,\nabla u-v=0}.
\end{equation}
Now, we solve \eqref{tv linear constrain} by applying the PMM with $f(u)=\dfrac12\norm{u-b}_F^2$, $g(v)=\zeta\norm{v}_1$, $M=\nabla$, $C=-I$ and $d=0$.

Given ${z}_{k-1},\,{w}_{k-1}\in\Omega$, the PMM requires the solution of problems,
\begin{align}
\label{subproblem l1}
&v_{k}=\arg\,\min_{v\in\Omega}\zeta\norm{v}_1-\inpr{z_{k-1}+\lambda w_{k-1}}{v}+\dfrac\lambda2\norm{v}^{2}_F,
\end{align}
and
\begin{align}
\label{subproblem l2}
&u_k = \arg\min_{u\in\R^{m\times n}}\dfrac12\norm{u-b}^2_F + \inpr{z_{k-1}-\lambda v_k}{\nabla u} + \dfrac\lambda2\norm{\nabla u}^2_F.
\end{align}
The optimality condition of problem \eqref{subproblem l1}, yields
\begin{equation*}
 0\in\zeta\partial \norm{\cdot}_1(v_k) -  (z_{k-1}+\lambda w_{k-1}) + \lambda v_k;
\end{equation*}
hence, 
\begin{equation*}
 v_k=\left(I+\dfrac\zeta\lambda\partial \norm{\cdot}_1\right)^{-1}\left(\dfrac1\lambda z_{k-1}+w_{k-1}\right).
\end{equation*}
Therefore, the solution of problem \eqref{subproblem l1} can be computed explicitly as
\begin{equation*}
v_k = \textbf{shrink}\left( \dfrac1\lambda z_{k-1} +  w_{k-1},\dfrac\zeta\lambda\right),
\end{equation*}
where the \textbf{shrink} operator is defined in \eqref{eq:shrink}. Deriving the optimality condition for problem \eqref{subproblem l2} we have that
\begin{equation*}
 0=u_k - b + \nabla^\ast(z_{k-1}-\lambda v_k) + \lambda\nabla^\ast\nabla u_k,
\end{equation*}
from which it follows that $u_k$ has to be the solution of the system of linear equations
\begin{equation*}
(I+\lambda\nabla^\ast\nabla) u_k = b-\nabla^\ast(z_{k-1}-\lambda v_k).
\end{equation*}

Thus, the PMM applied to problem \eqref{tv linear constrain} produces the iteration:
\begin{align}
\label{formula v^k}
 &v_k = \textbf{shrink}\left(\dfrac1\lambda  z_{k-1} + w_{k-1},\dfrac\zeta\lambda\right),\\
 \label{u problem}
 &(I+\lambda\nabla^\ast\nabla) u_k = b-\nabla^\ast(z_{k-1}-\lambda v_k),\\
 &\gamma_k = \frac{\lambda\norm{w_{k-1}-v_k}^2+\lambda\inpr{v_k-\nabla u_k}{w_{k-1}-\nabla u_k}}{\norm{\nabla u_k-v_k}^2+\lambda^2\norm{\nabla u_k-w_{k-1}}^2},\\
 & z_{k} = z_{k-1} + \rho_k\gamma_k(\nabla u_k - v_k),\\
 & w_{k} = w_{k-1} - \rho_k\gamma_k\lambda(w_{k-1}-\nabla u_k).
\end{align}

We used three images to test the PMM in our experiments: the first was ``Lena'' image of size $512\times512$, the second was ``Baboon'' image of size $512\times512$, and the third was ``Man'' image of size $768\times768$, see Figure \ref{fig:test images}. All images were contaminated with Gaussian noise using the Matlab function ``\texttt{imnoise}'' with variance $\sigma=0.02$ and $\sigma=0.06$. The PMM was implemented in Matlab code and it was chosen $\lambda=1$ in all tests, since we have found that choosing this valued for $\lambda$ was effective for all the experiments. Images were denoised with $\zeta=20$ and $\zeta=50$.

\begin{figure}[h]
\centering
\subfigure{\includegraphics[height=48.0mm,width=48.0mm]{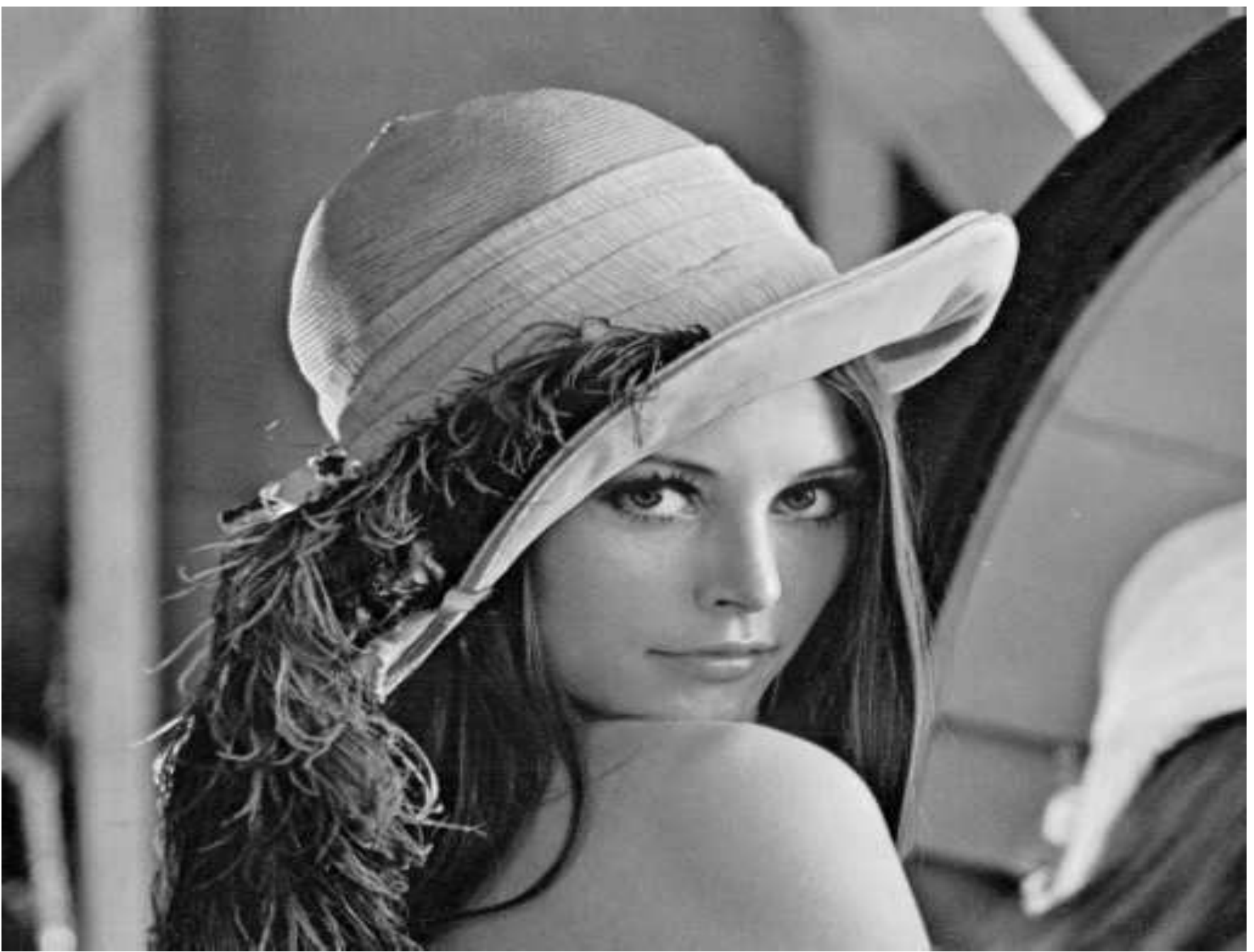}}
\subfigure{\includegraphics[height=48.0mm,width=48.0mm]{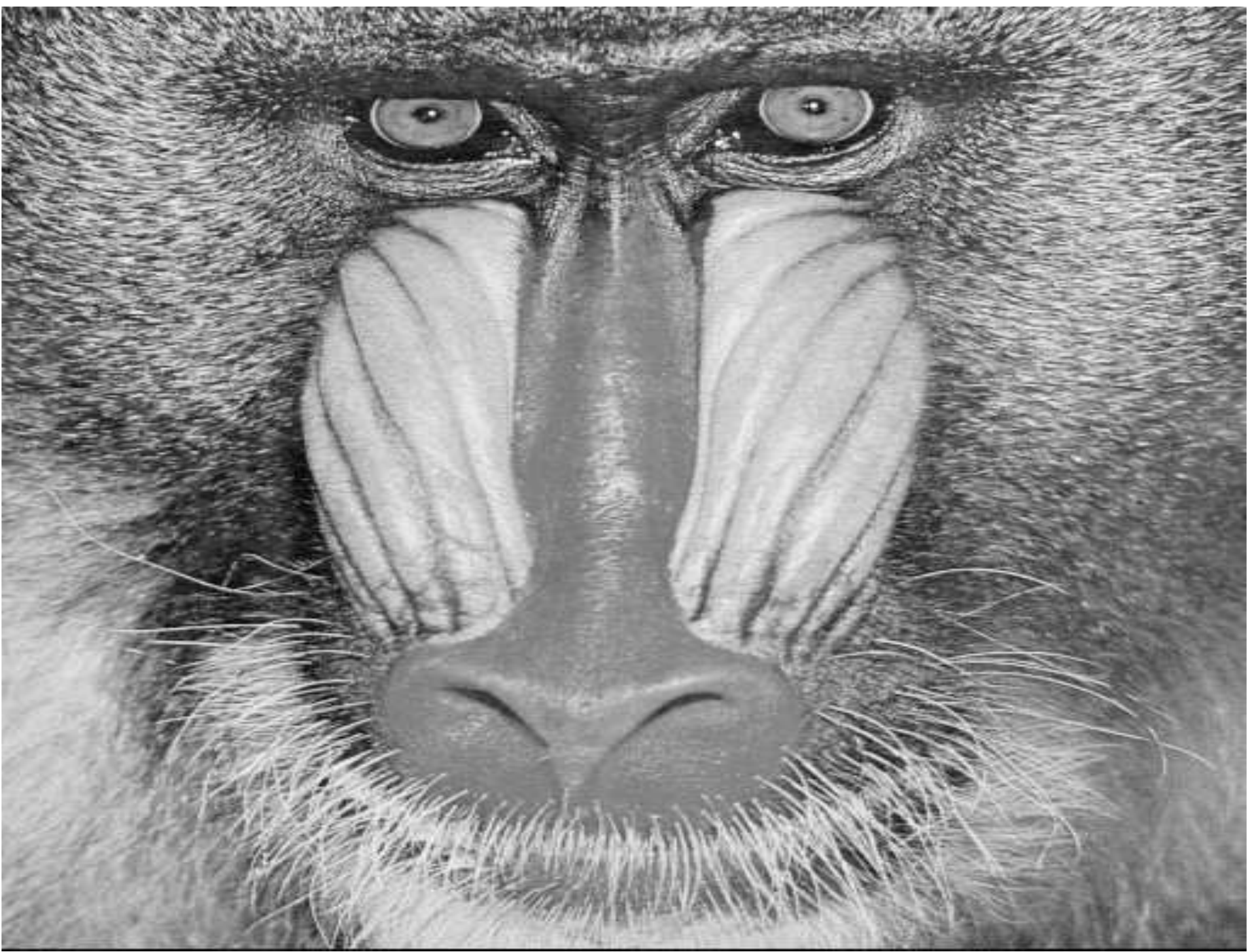}}
\subfigure{\includegraphics[height=48.0mm,width=48.0mm]{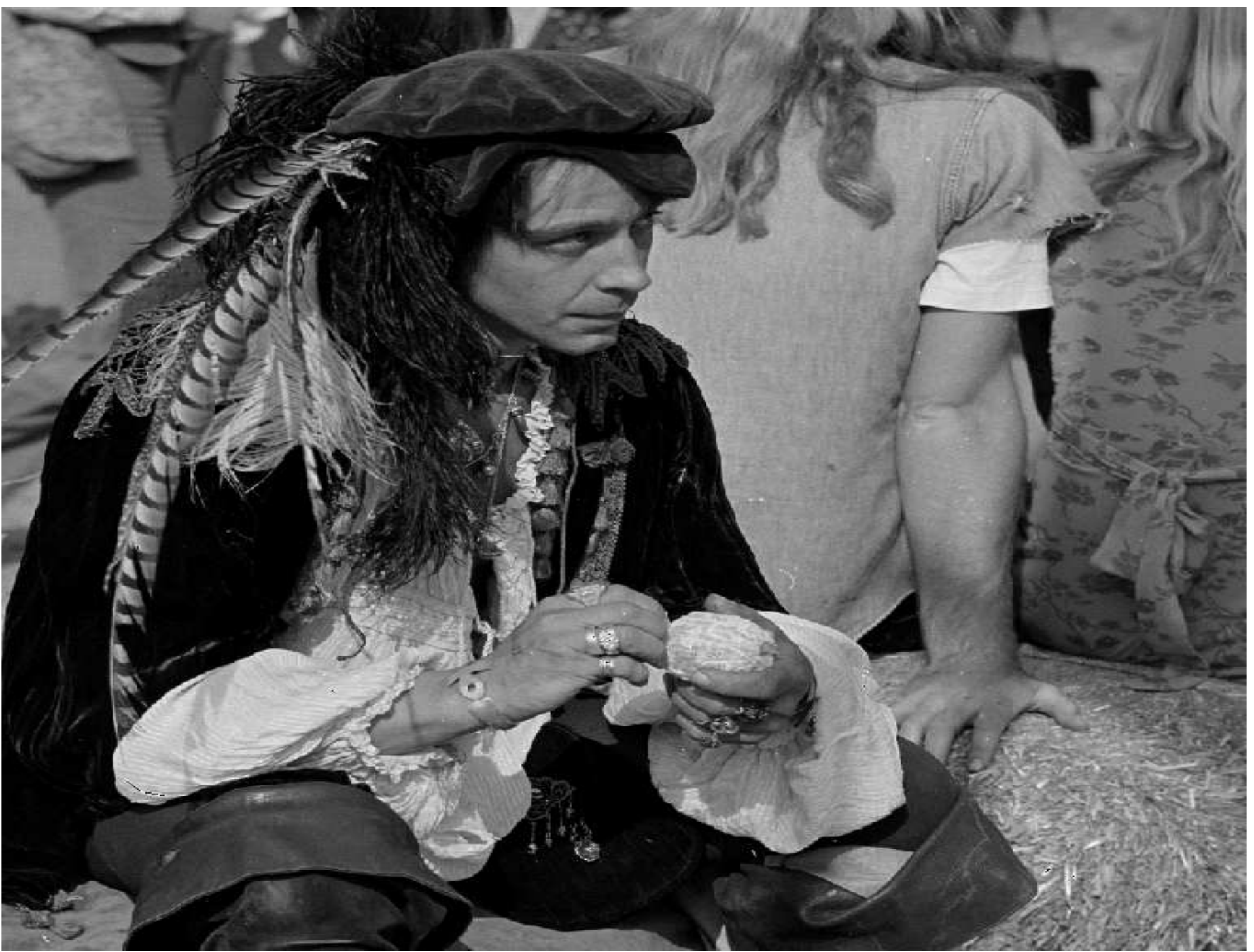}}
\caption{Test images: Lena (left), Baboon (center) and Man (right).}
\label{fig:test images}
\end{figure}

As a way to provide a reference, we also report the results obtained with ADMM, which is actually equivalent to the Split Bregman (SB) 
method \cite{goldstein_bregman,esser2009} for TV regularized problems. For a fair comparison, we implemented the 
generalized ADMM \cite{eck_ber_dr} with over and under relaxation factors, see also \cite{eckstein2012augmented}. In the 
numerical tests we used $\rho_k=1$ or $\rho_k=1.5$ for all integer $k\geq1$, in both methods. In Figure \ref{fig:denoising results} we present some denoising results. It shows the noise contaminated images and the reconstructed images with the PMM.
 As in \cite{goldstein_bregman} iterations were terminated when condition $\norm{u_k-u_{k-1}}/\norm{u_k}\leq10^{-3}$ was met; since this stopping criterion is satisfied faster than the stopping condition given by the KKT residuals, while yielding good denoised images.

Additionally, in Figure \ref{fig:residuals lena} we report the primal and dual residuals for the KKT optimality conditions for problem \eqref{tv linear constrain} for both methods, in some specific tests. The primal and dual residuals for the PMM were defined in section \ref{sec:complexity}. For the ADMM the primal residual is also defined as $\nabla u_k-v_k$, i.e. it is the residual for the equality constraint at iteration $k$. The dual residual for the ADMM is defined as the residual for the dual feasibility condition (see equation \eqref{Kuhn Tucker} and the comments below). Since the exact solution of the problems are known we also plotted in Figure \ref{fig:residuals lena} the  error $\norm{u_k-u^\ast}$ vs iteration, where $u^\ast$ is the exact solution. In these experiments both methods were stopped at iteration $50$. It can be observed in Figure \ref{fig:residuals lena} that the speed of the PMM and ADMM measured by the residuals curves are very similar; however the residuals for the PMM decay faster, and this difference is more evident in the dual residual curve.

In Table \ref{tab:comparison pmm admm} we present a more detailed comparison between the methods. It reports the iteration counts and total time, in seconds, required for the PMM and ADMM in the experiments. We observe that in the tests the PMM executed fewer iterations than ADMM, and the PMM was generally faster. We also observe that both methods accelerate when $\rho=1.5$. 

\begin{figure}[h]
\centering
\subfigure[$\sigma=0.02$]{\includegraphics[height=48mm,width=48mm]{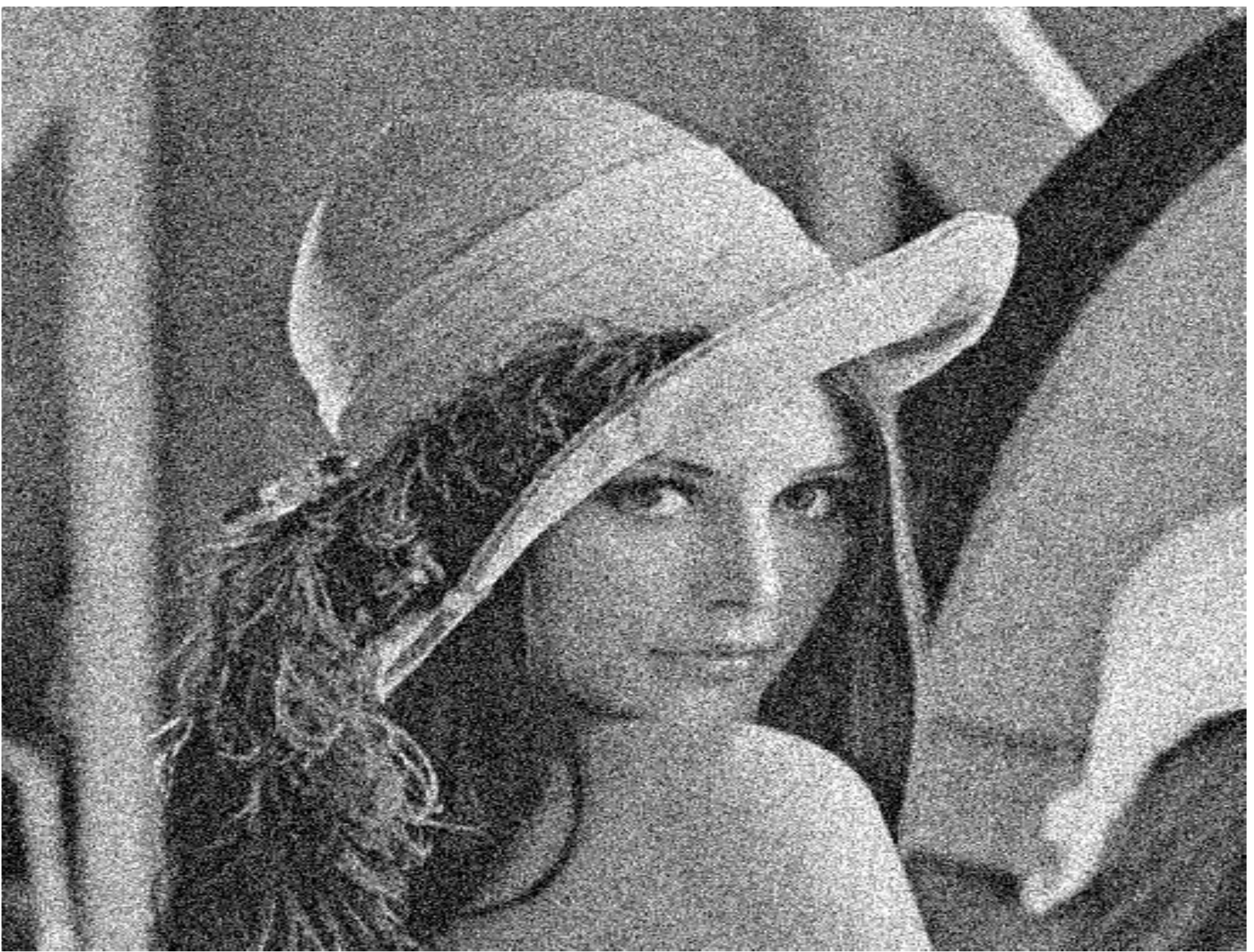}}
\subfigure[12/14, $\rho=1.5$]{\includegraphics[height=48mm,width=48mm]{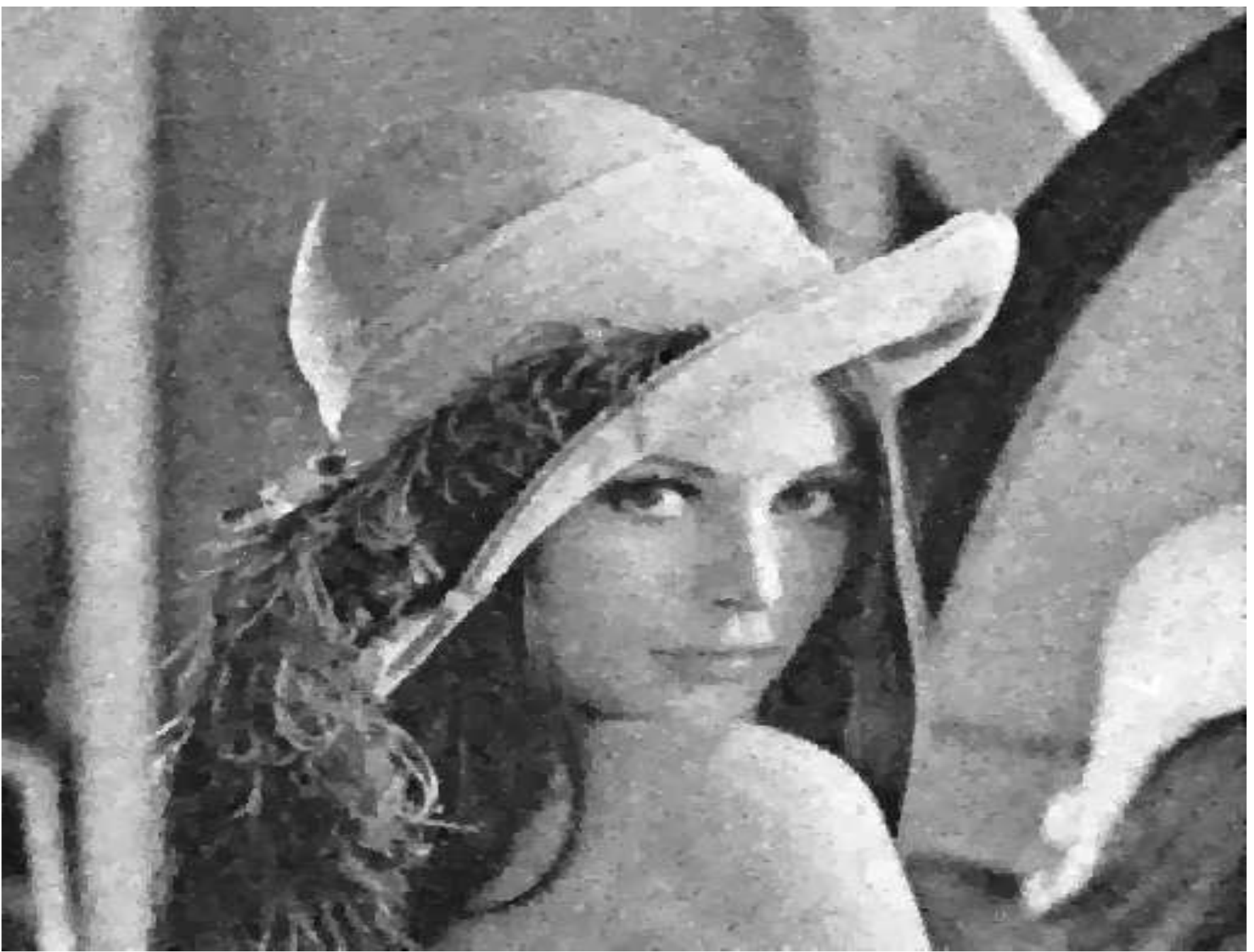}}
\subfigure[20/21, $\rho=1$]{\includegraphics[height=48mm,width=48mm]{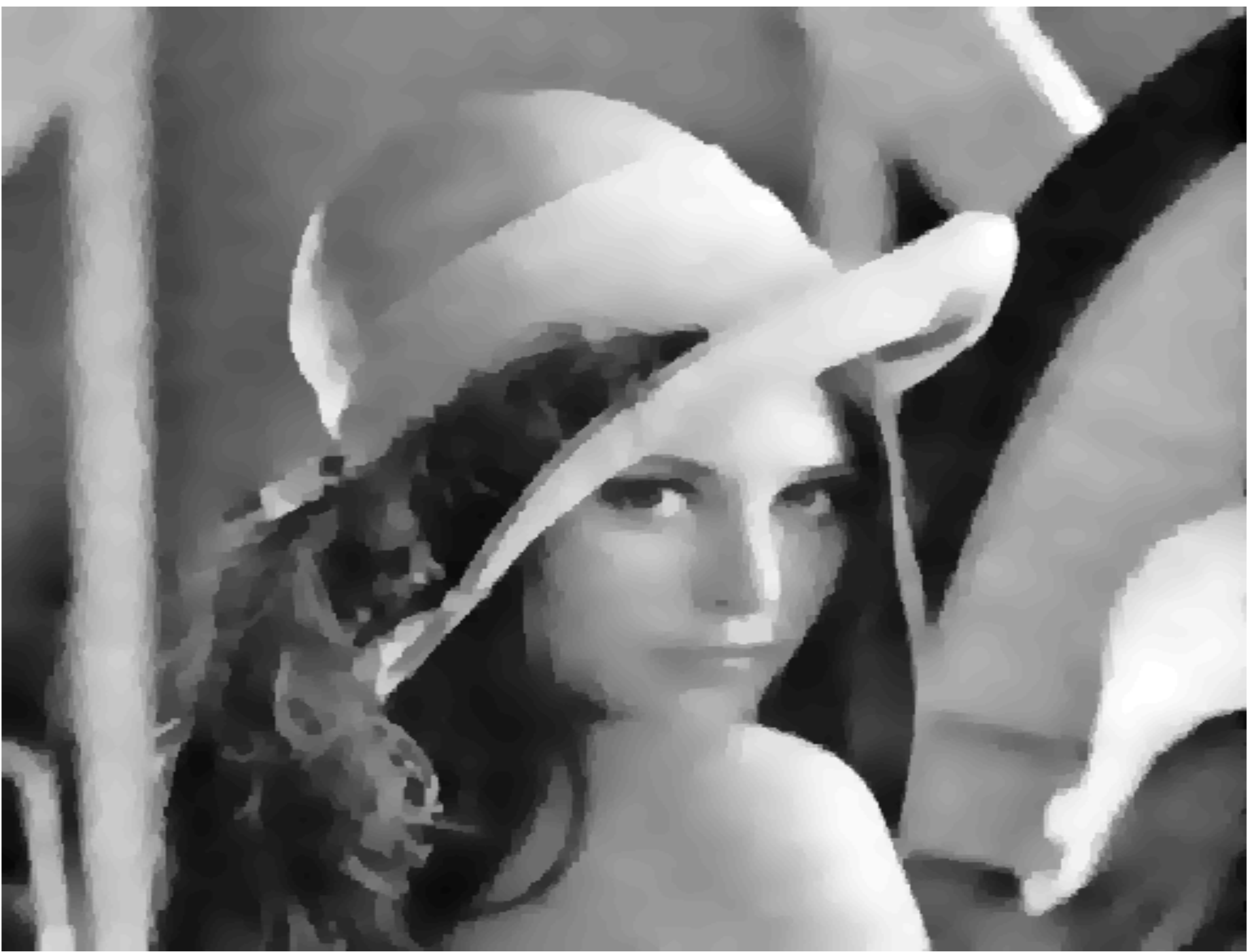}}
\subfigure[$\sigma=0.02$]{\includegraphics[height=48mm,width=48mm]{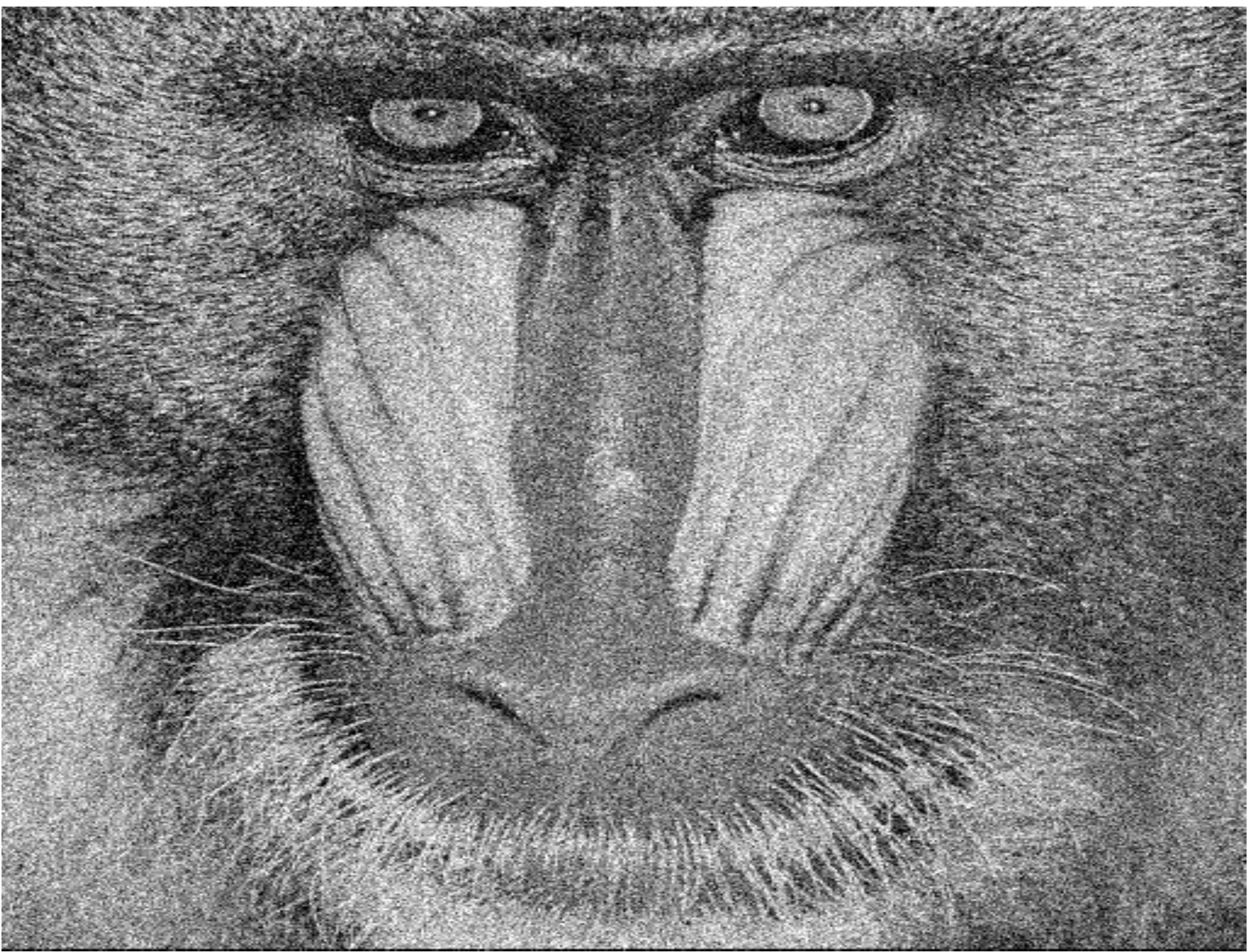}}
\subfigure[13/18, $\rho=1$]{\includegraphics[height=48mm,width=48mm]{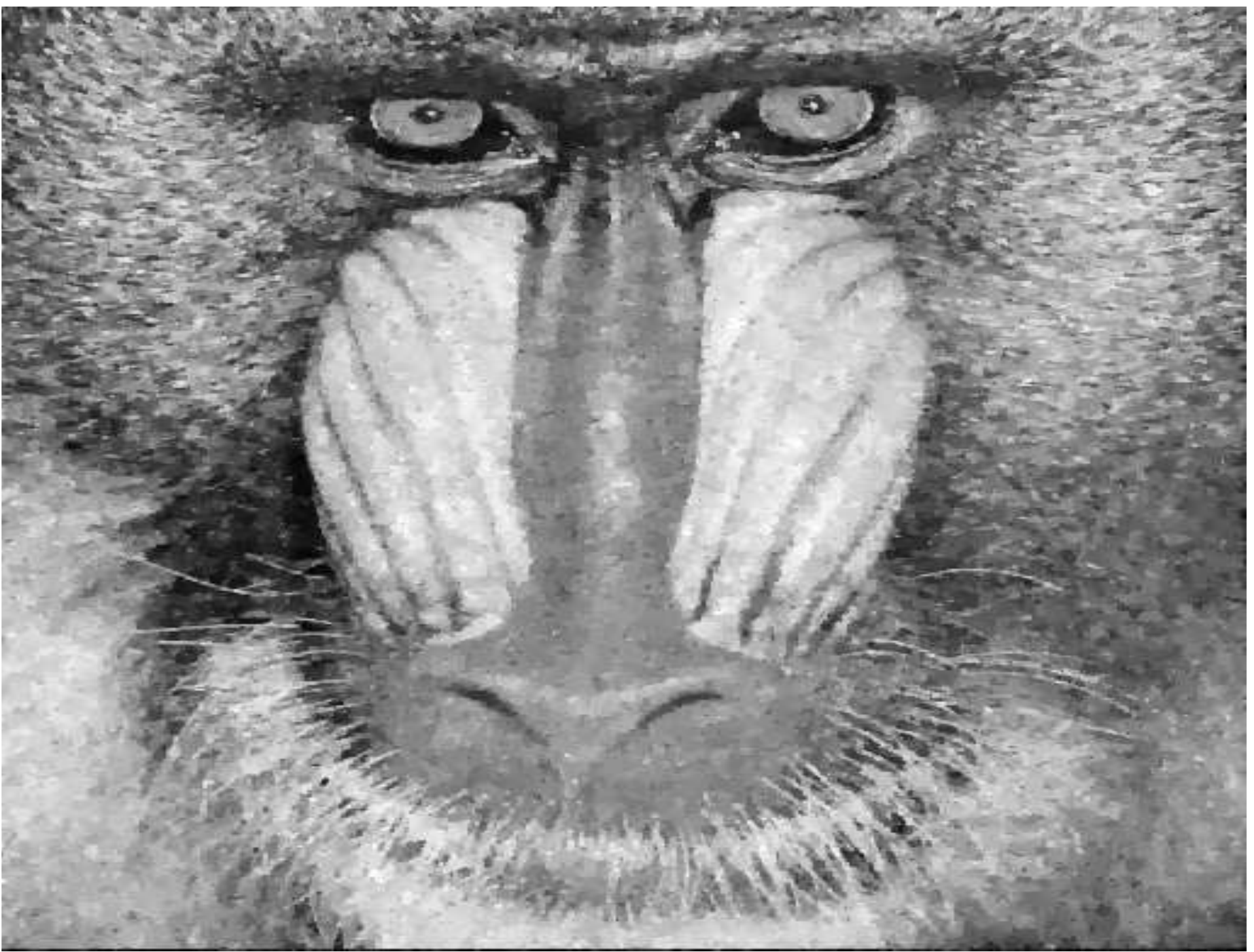}}
\subfigure[20/21, $\rho=1$]{\includegraphics[height=48mm,width=48mm]{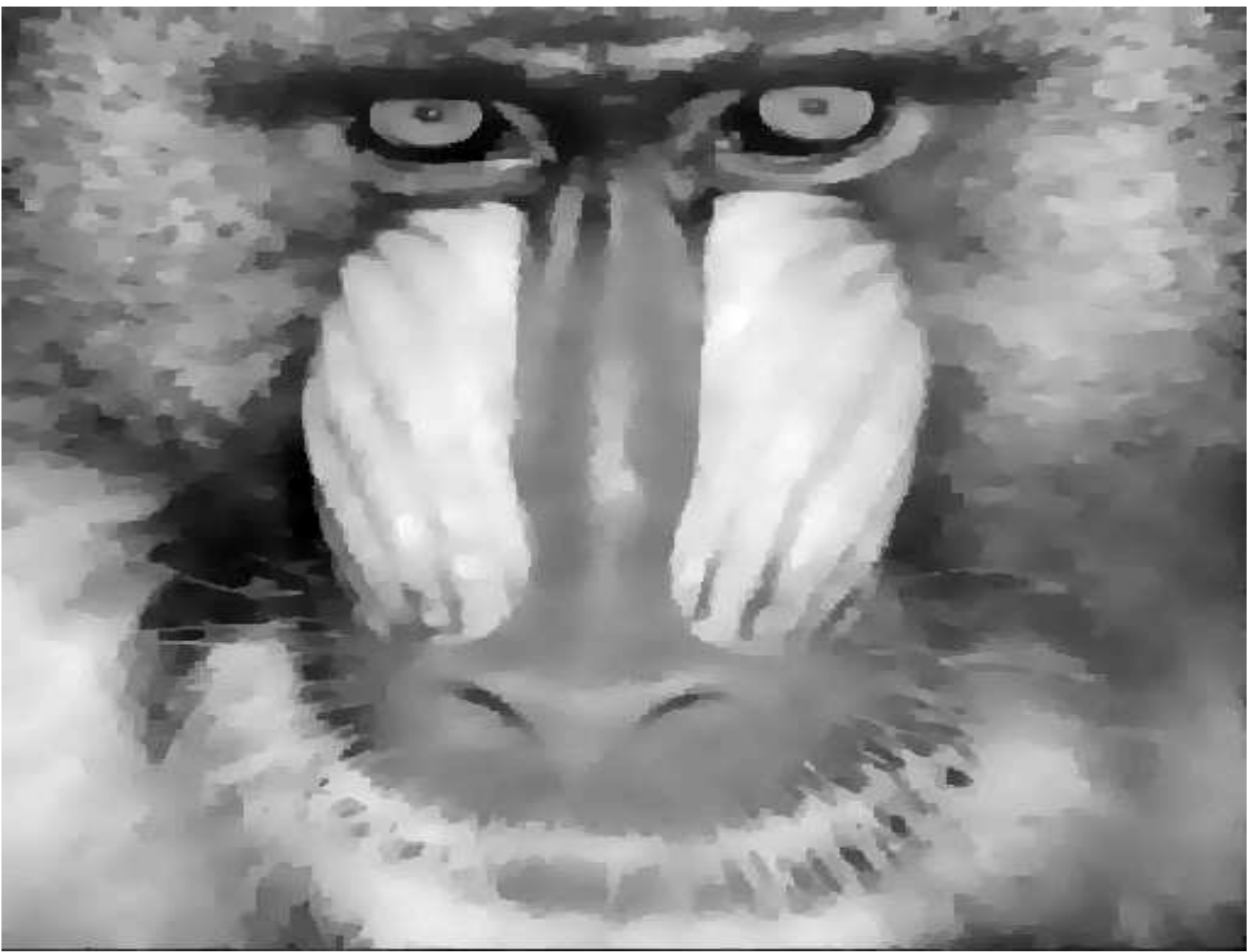}}
\subfigure[$\sigma=0.06$]{\includegraphics[height=48mm,width=48mm]{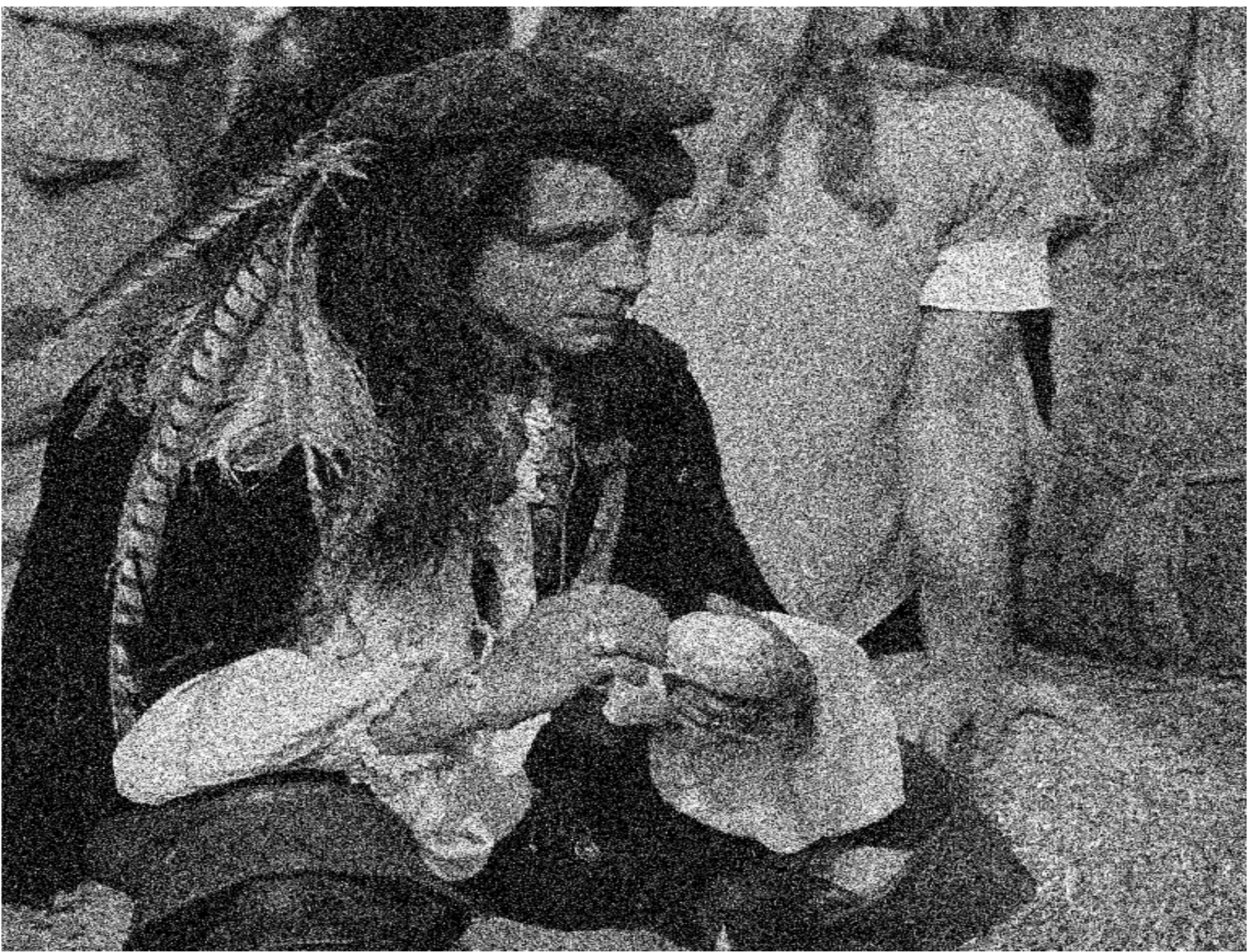}}
\subfigure[13/16, $\rho=1.5$]{\includegraphics[height=48mm,width=48mm]{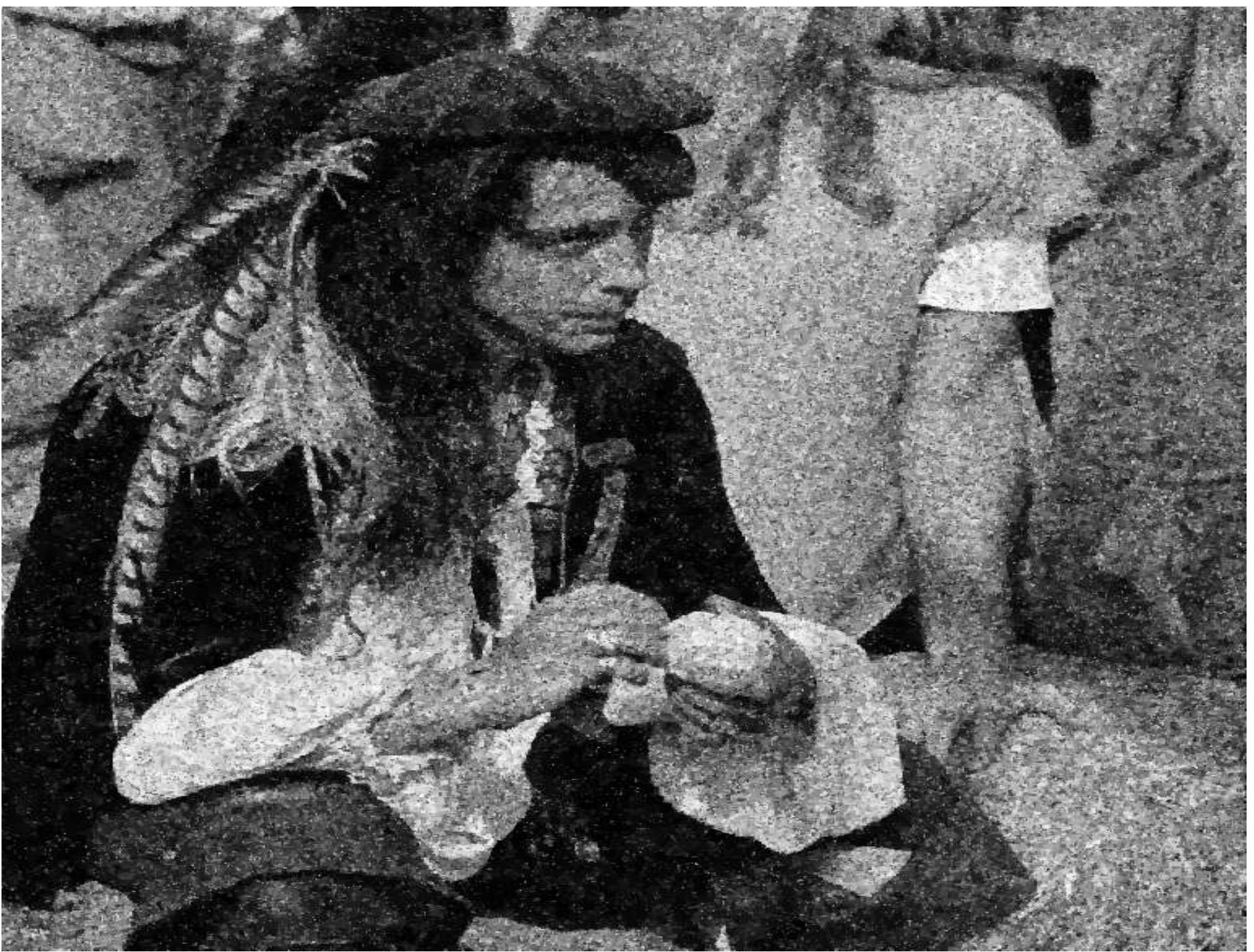}}
\subfigure[19/21, $\rho=1.5$]{\includegraphics[height=48mm,width=48mm]{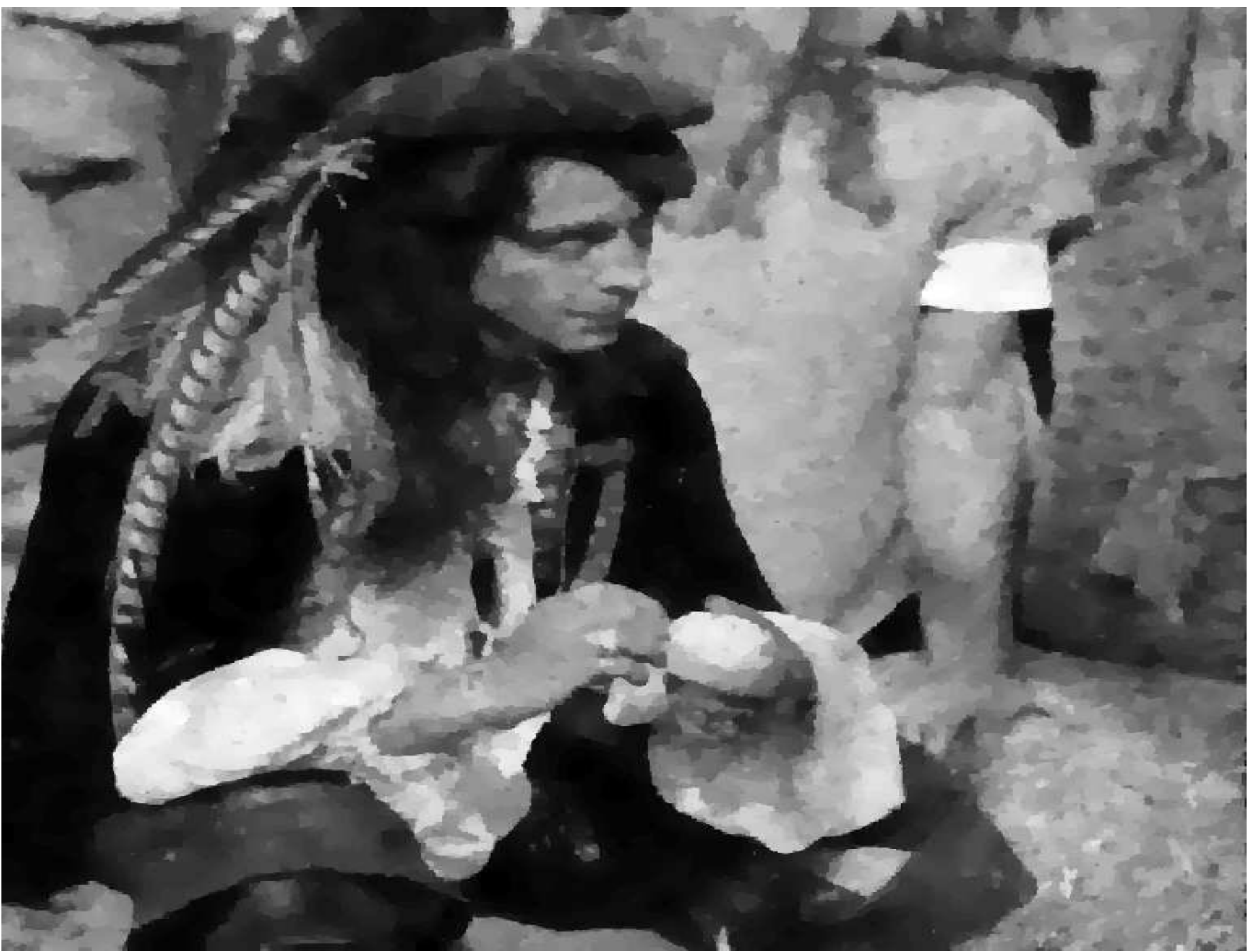}}
\caption{Denoising results. Images (a), (d) and (g) are contaminated with Gaussian noise, the value of variance ($\sigma$) is reported below each image. Images were denoised with $\zeta=20$ (center) and $\zeta=50$ (right). The value of $\rho$ and the number of iterations required to satisfy the stopping criterion, for both the PMM/ADMM, are listed below each image.}
\label{fig:denoising results}
\end{figure}

\begin{figure}[h!]
\centering 
\subfigure{\includegraphics[height=55mm,width=70mm]{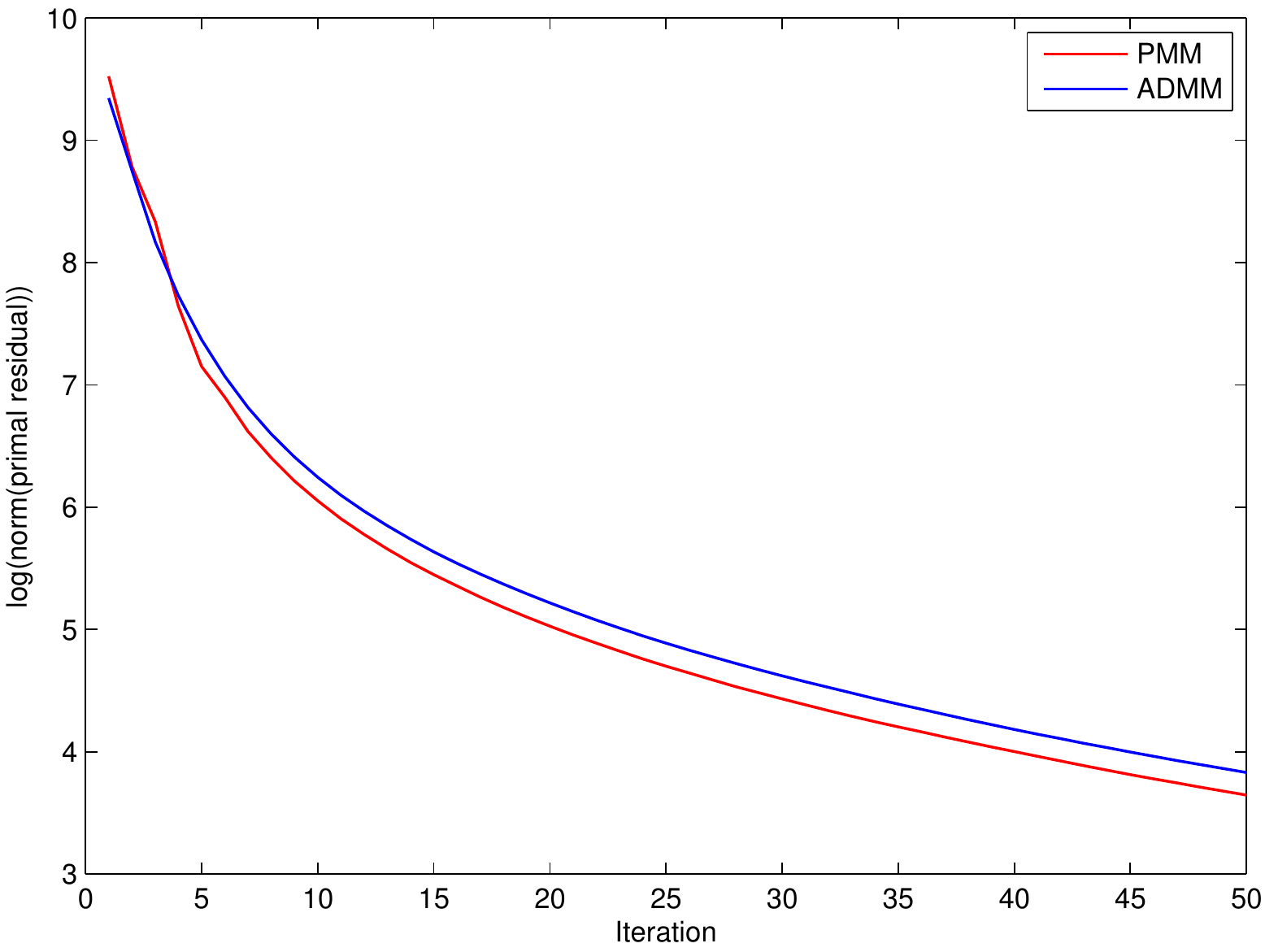}}
\subfigure{\includegraphics[height=55mm,width=70mm]{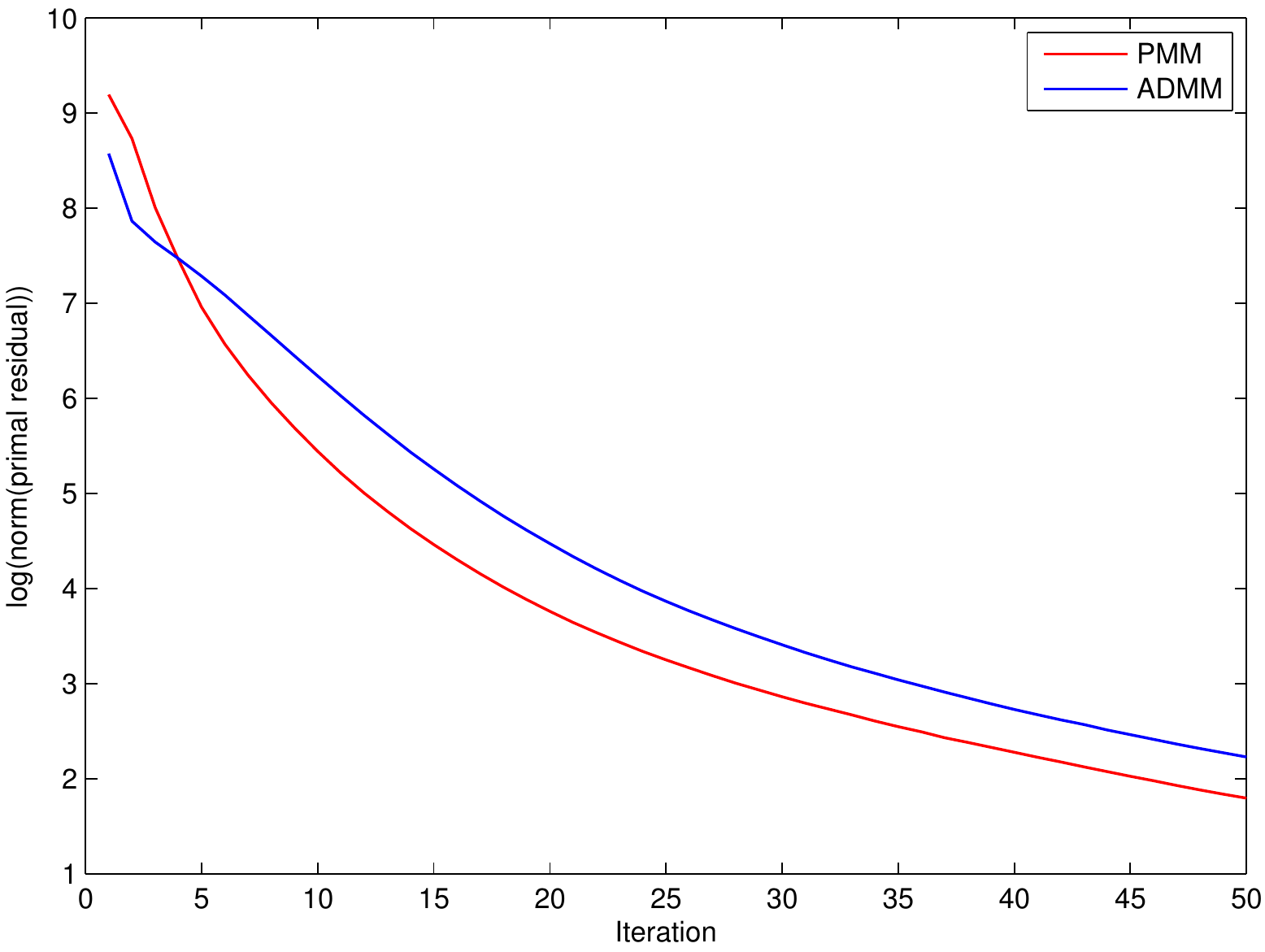}}
\subfigure{\includegraphics[height=55mm,width=70mm]{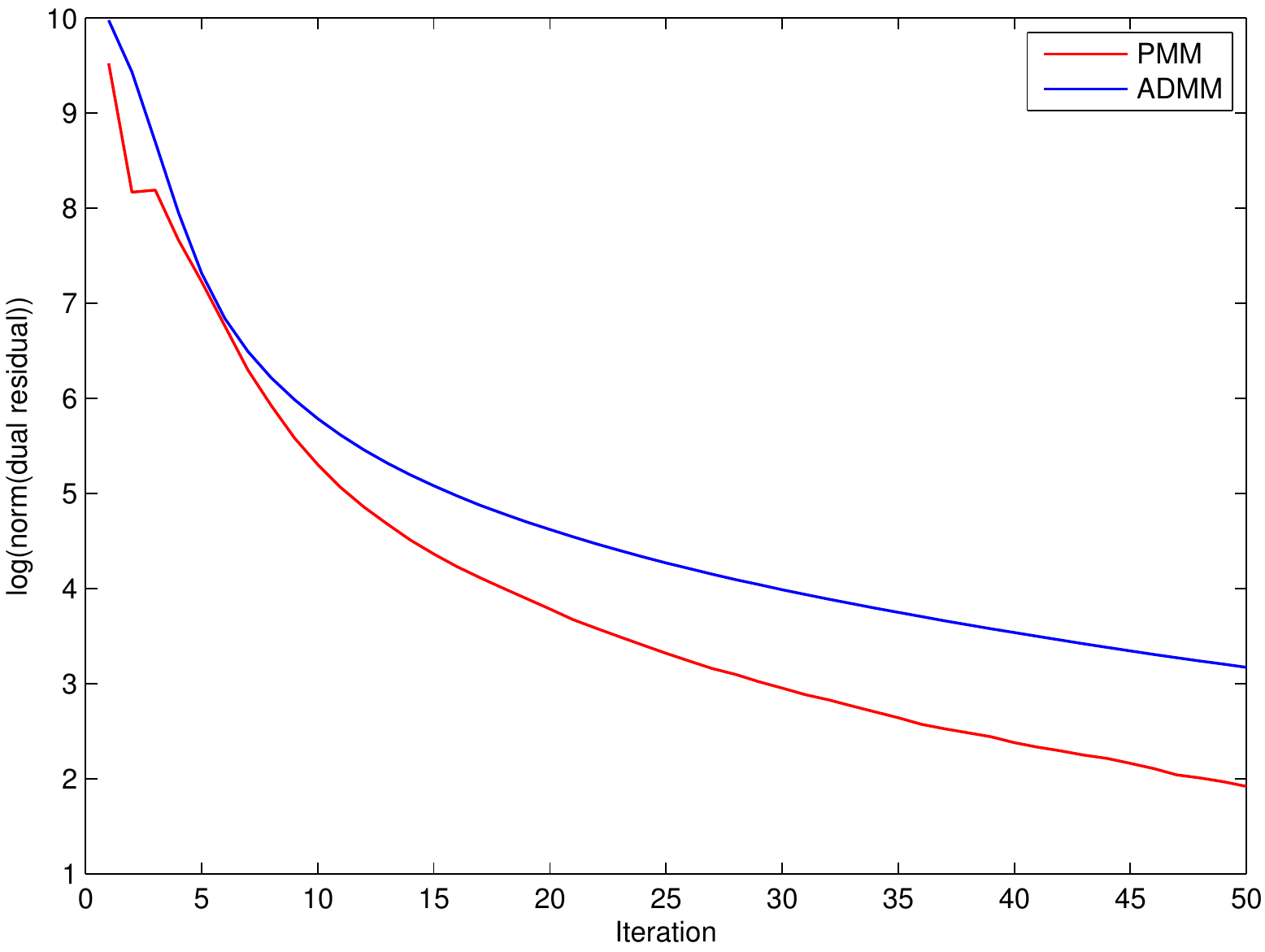}}
\subfigure{\includegraphics[height=55mm,width=70mm]{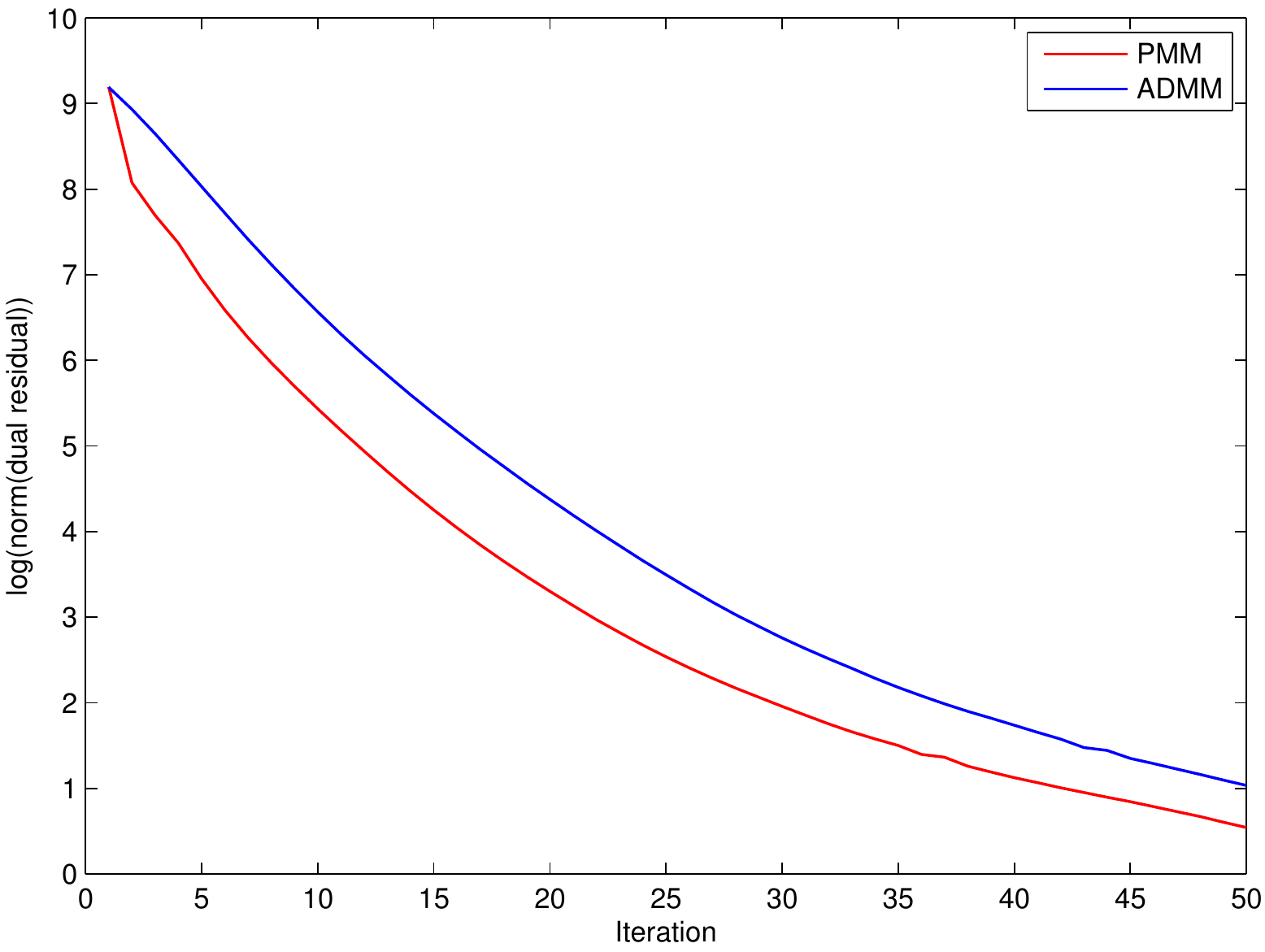}}
\subfigure{\includegraphics[height=55mm,width=70mm]{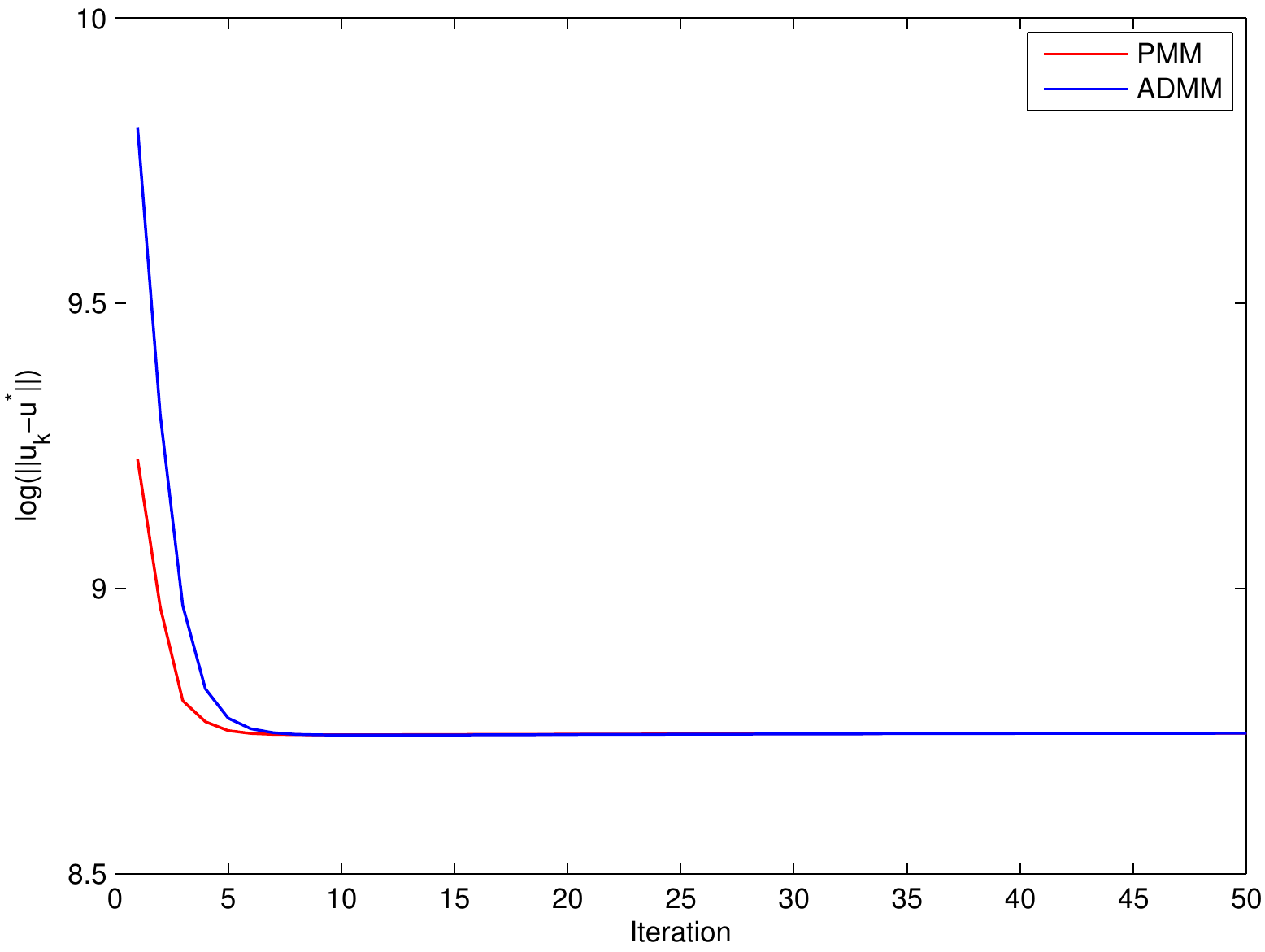}}
\subfigure{\includegraphics[height=55mm,width=70mm]{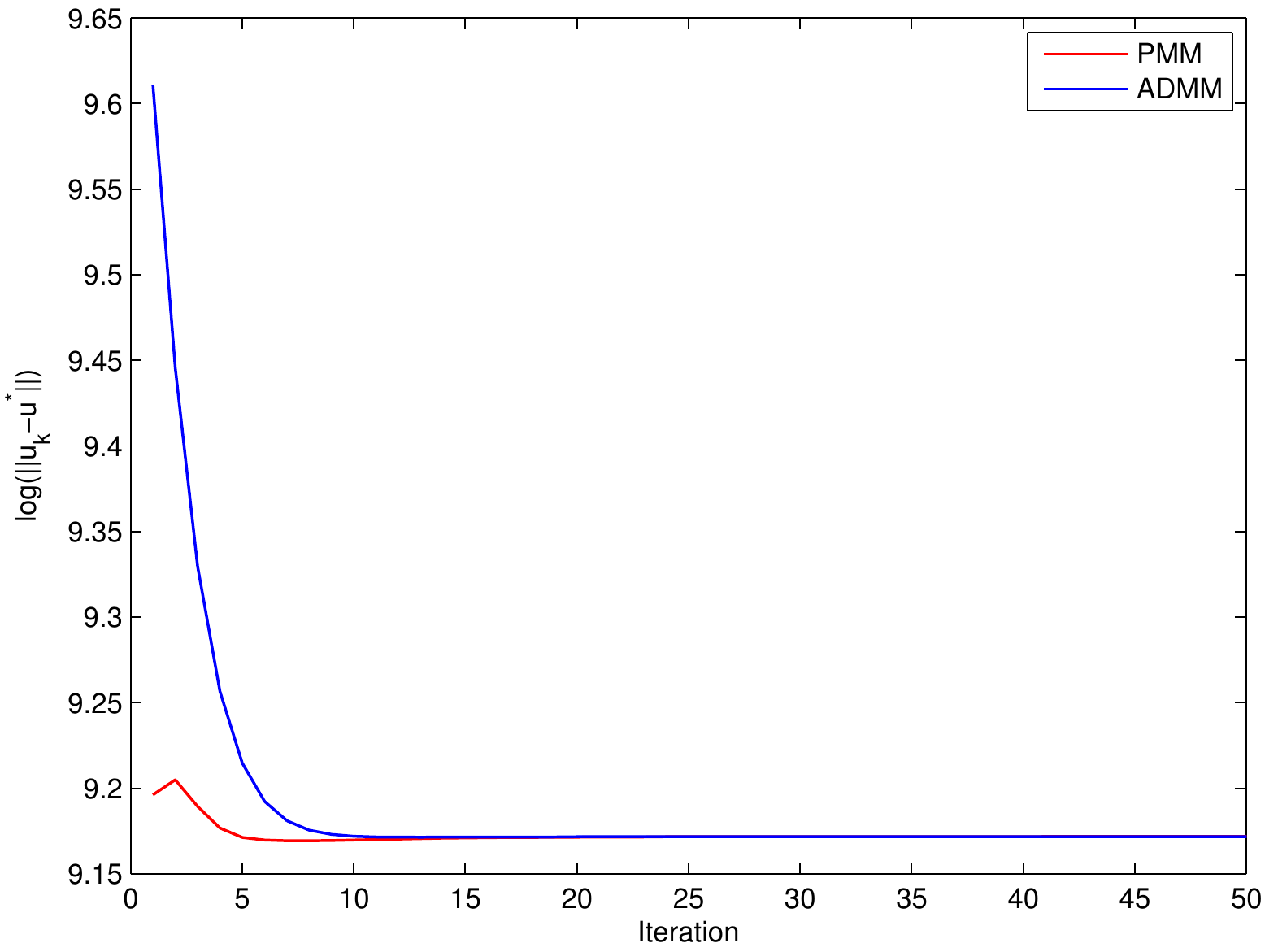}}
\caption{Residual curves of the PMM and ADMM for the TV denoising problems. (top) Primal error $\norm{\nabla u_k - v_k}$ vs iteration number $k$. (center) Dual error $\norm{x_k-y_k}$ vs iteration number $k$. (bottom) Error $\norm{u_k-u^\ast}$ vs iteration number $k$ ($u^\ast$ is the exact solution). (left) Convergence results are for the tested image Lena with $\sigma=0.06$, $\zeta=50$ and $\rho=1.5$. (right) Convergence results are for the tested image Baboon with $\sigma=0.02$, $\zeta=20$ and $\rho=1$.}
\label{fig:residuals lena}
\end{figure}

\begin{table}[h]
 \begin{center}
   \begin{tabular}{cccc|cc}%Gaussian noise 0.05, zeta=50, 20 iteraçoes
   \hline
   Image & $\zeta$ & $\rho$& $\sigma$ & PMM & ADMM\\
   \hline
   Lena & $20$  & $1$ & $0.02$ & $13 (2.789)$ & $17 (3.510)$\\
      Lena & $20$  & $1.5$ & $0.02$ & $12 (1.803)$ & $14 (2.050)$\\
          Lena & $50$  & $1$ & $0.06$ & $19 (2.855)$ & $21 (3.050)$\\
          Lena & $50$  & $1.5$ & $0.06$ & $17 (2.642)$ & $18 (2.656)$\\
          Baboon & $50$  &  $1$ & $0.02$ & $20 (2.862)$ & $21 (2.811)$\\
          Baboon & $50$   &  $1.5$ & $0.02$ & $19 (2.920)$ & $19 (2.789)$\\
          Baboon & $20$  &  $1$ & $0.06$ & $15 (2.311)$ & $21 (2.928)$\\
          Baboon & $20$   &  $1.5$ & $0.06$ & $13 (2.025)$ & $15 (2.336)$\\
        Man & $50$   &  $1$ & $0.02$ & $24 (7.537)$ & $24 (7.364)$\\
          Man & $50$   &  $1.5$ & $0.02$ & $21 (6.700)$ & $22 (6.597)$\\
          Man & $20$   &  $1$ & $0.06$ & $16 (5.322)$ & $21 (6.540)$\\         
          Man & $20$   &  $1.5$ & $0.06$ & $13 (4.395)$ & $16 (5.143)$\\    
          Man & $50$   &  $1$ & $0.06$ & $21 (6.625)$ & $24 (7.371)$\\       
          \hline\\    
\end{tabular}
 \end{center}
 \caption{Iterations and computation times (seconds) in parenthesis required for the TV problem.}
 \label{tab:comparison pmm admm}
\end{table}

The operation of highest computational cost within each iteration of the PMM, and ADMM, for the TV problem, consists in solving problem \eqref{u problem}. In our tests we solved this step for both algorithms using the  Conjugate Gradient (CG) method with tolerance $10^{-5}$. This strategy consistently yielded convergence in fewer iterations when using the PMM. Table \ref{tab:cg comparison} presents the total number of iteration executed by the CG method in each algorithm for some specific experiments. In the tests presented both methods were stopped at iteration 20. 

\begin{table}[h]
\begin{center}
\begin{tabular}{cccc|cc}
\hline
Image & $\zeta$ & $\rho$& $\sigma$ & PMM & ADMM\\
\hline
Lena & $20$  & $1$ & $0.02$ & $108$ & $117$\\
Lena & $20$ & $1.5$ & $0.06$ & $101$ & $110$\\
Baboon & $20$   &  $1.5$ & $0.02$ & $102$ & $110$\\
Baboon & $50$   &  $1$ & $0.02$ & $121$ & $122$\\
Man & $20$   &  $1.5$ & $0.06$ & $104$ & $112$\\    
Man & $50$   &  $1$ & $0.06$ & $124$ & $126$\\
\hline\\
\end{tabular}
\end{center}
\caption{Total number of iteration of CG method. Tests were stopped at iteration 20.}
\label{tab:cg comparison}
\end{table}

However, the authors of \cite{goldstein_bregman} observed that the ADMM (SB method) attained optimal efficiency executing, at each iteration of the algorithm, just a single iteration of an iterative method to solve problem \eqref{u problem}. This inexact minimization can be justified by the convergence theory for the generalized ADMM developed by Eckstein and Bertsekas in \cite{eck_ber_dr}, see also \cite{esser2009}. 

In \cite{eck_sv_09}, Eckstein and Svaiter generalized the projective-splitting algorithm for the sum of $N\geq2$ maximal monotone operators, and they introduced a relative error criterion for approximately evaluating the proximal mappings.
This framework suggests that the PMM can also admit inexact minimization for the subproblems. Indeed, as Figure \ref{fig:denoising one cg-it} below shows, the PMM also yields good denoised images performing a single iteration of the CG method at each step of the algorithm.

\begin{figure}[h]
\centering
\subfigure[$\sigma=0.06$]{\includegraphics[height=48mm,width=48mm]{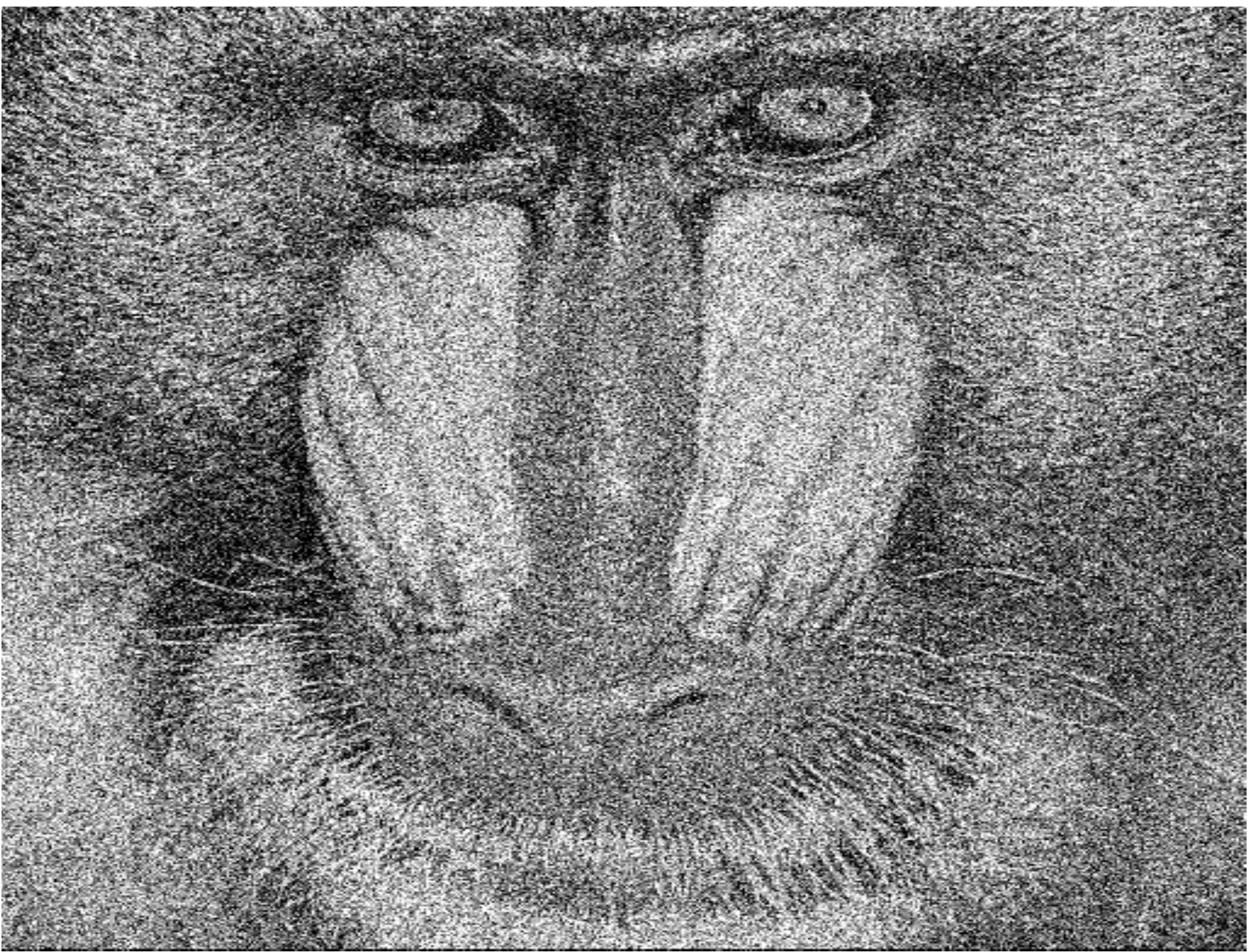}}
\subfigure[20(1.594), $\zeta=50$, $\rho=1$]{\includegraphics[height=48mm,width=48mm]{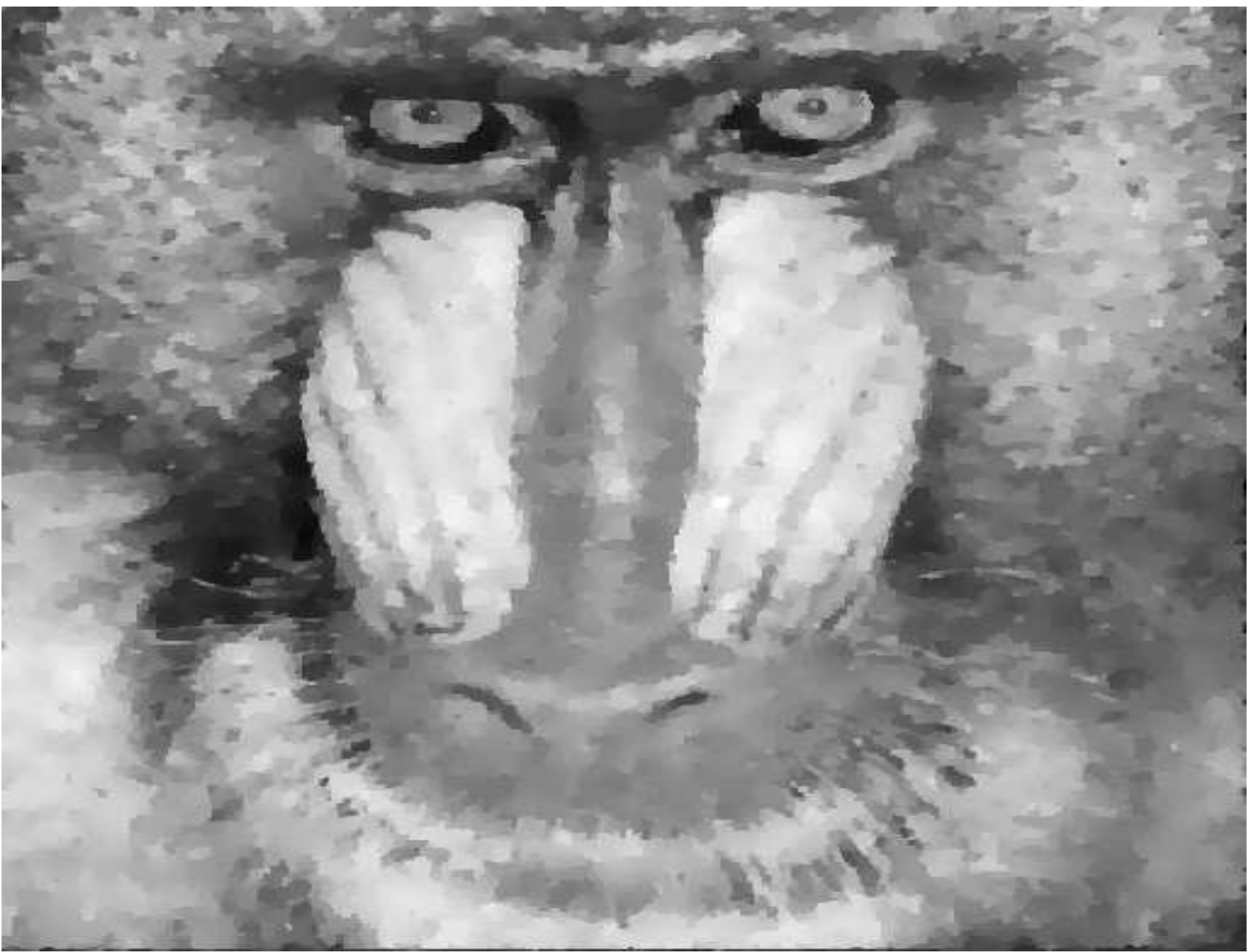}}
\subfigure[22(1.557), $\zeta=50$, $\rho=1$]{\includegraphics[height=48mm,width=48mm]{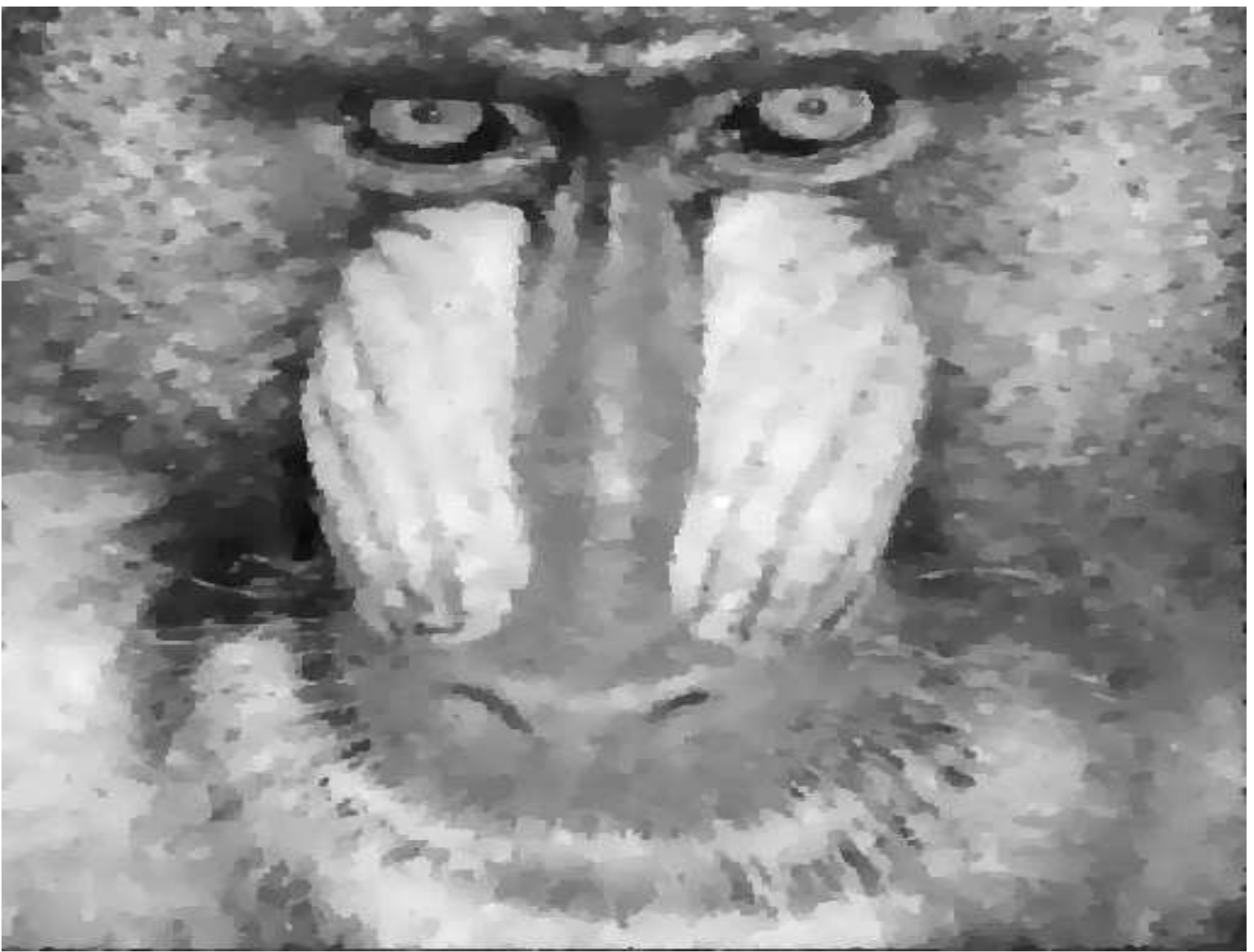}}
\subfigure[$\sigma=0.02$]{\includegraphics[height=48mm,width=48mm]{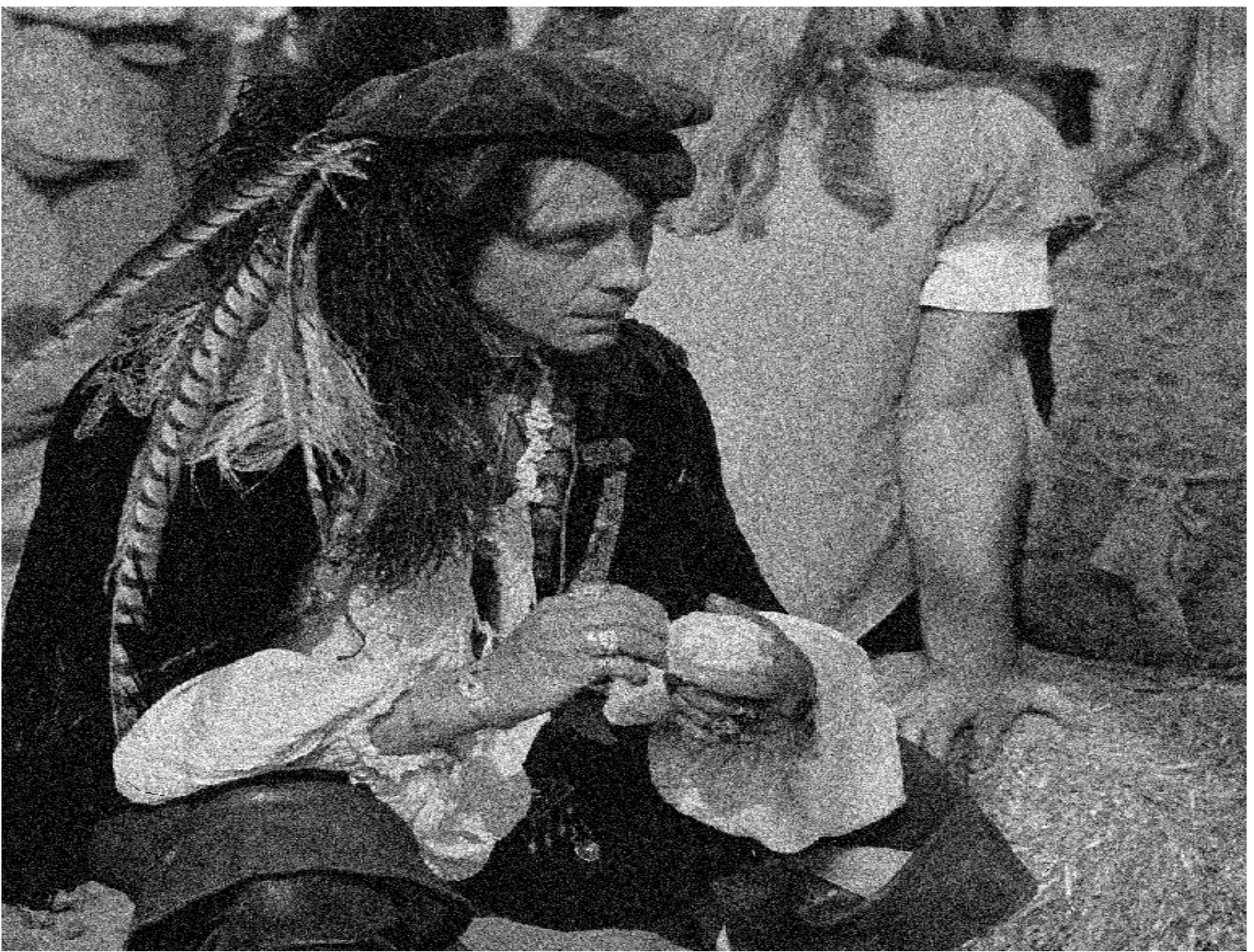}}
\subfigure[14(2.436), $\zeta=20$, $\rho=1.5$]{\includegraphics[height=48mm,width=48mm]{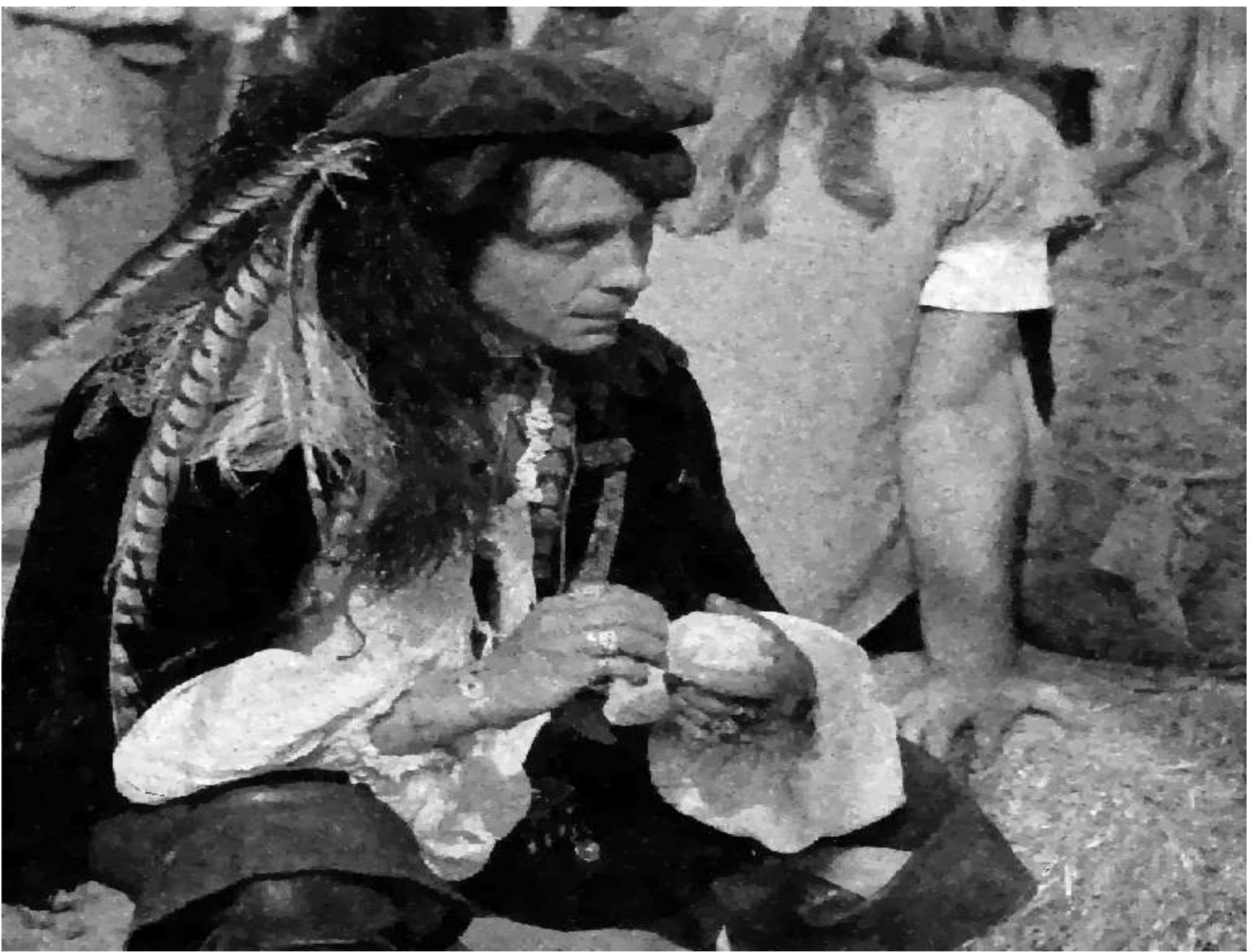}}
\subfigure[16(2.514), $\zeta=20$, $\rho=1.5$]{\includegraphics[height=48mm,width=48mm]{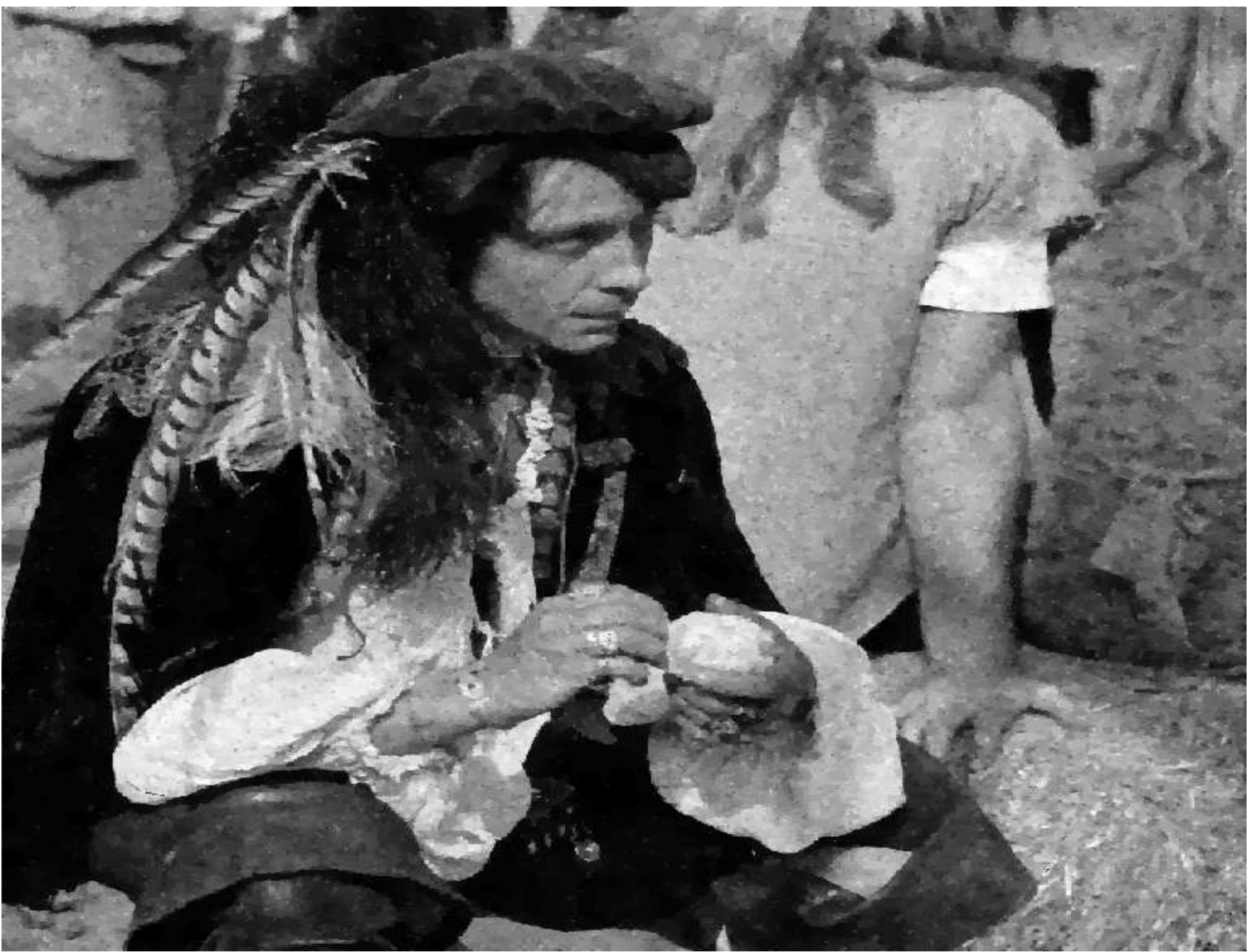}}
\caption{Denoising with one iteration of CG method per iteration. (left) Noisy images, the value of variance is reported below each image. (center) Images denoised with PMM. (right) Images denoised with ADMM. The number of iterations, the total time in seconds (in parenthesis); as well as the used values of $\zeta$ and $\rho$ are displayed below each image.}
\label{fig:denoising one cg-it}
\end{figure}

\subsection{Compressed sensing}
\label{subs:cs}
In many areas of applied mathematics and computer science it is often desirable to reconstruct a signal from small amount 
of data. Compressed sensing is a signal processing technique that allow the reconstruction of signals and images from 
small number of measurements, provided that they have a sparse representation. This technique has gained considerable 
attention in the signal processing community since the works of Cand\`es, Romberg and Tao 
\cite{CanRomTer2006}, and of Donoho \cite{Donoho2006}, and it has had a significant impact in several 
applications, for example in imaging, video and medical imaging.

For testing the PMM we consider a particular application of compressed sensing in Magnetic Resonance Imaging 
(MRI), which is an essential medical imaging tool. MRI is based on the reconstruction of an image from a subset of measurements in the Fourier domain. This imaging 
problem can be modeled by the optimization problem
\begin{equation}
\label{eq:cs problem}
\min_u \,\, TV(u) + \dfrac{\zeta}{2}\norm{RFu-b}_F^2,
\end{equation}
where $TV$ is the total variation norm \eqref{eq:tv-norm}, $F$ is the Discrete Fourier Transform, $R$ is a diagonal matrix, $b$ is the known Fourier data and $u$ is the unknown image that we wish to reconstruct. 

The matrix $R$ has a $1$ along the diagonal at entries corresponding to the Fourier coefficients that were measured, and $0$ for the unknown coefficients.  The second term in \eqref{eq:cs problem} induces the Fourier transform of the reconstructed image to be close to the measured data, while the TV term in the minimization enforces ``smoothness'' of the image. The parameter $\zeta>0$ provides a tradeoff between the fidelity term and the smoothness term.

Problem \eqref{eq:cs problem} can be posed as a linearly constrained minimization problem \eqref{eq:convex opt prob} in much the same manner as was done for the TV problem in the previous subsection. Therefore, to apply the PMM to \eqref{eq:cs problem} we take $f(u) = \dfrac{\zeta}{2}\norm{RFu-b}^2_F$, $g(v)=\norm{v}_1$, $M = \nabla$, $C=-I$ and $d=0$. The resulting minimization problems are 
\begin{align}
\label{eq:v-subproblem-cs}
& v_k = \arg \min_v \,\, \norm{v}_1 - \inpr{z_{k-1} + \lambda w_{k-1}}{v} + \dfrac\lambda2\norm{v}^2_F,
\end{align}
and
\begin{align}
\label{eq:u-subproblem-cs}
& u_k = \arg \min_u \,\, \dfrac{\zeta}{2}\norm{RFu-b}^2_F + \inpr{z_{k-1} - \lambda v_k}{\nabla u} + \dfrac\lambda2\norm{\nabla u}^2_F.
\end{align}

Problem \eqref{eq:v-subproblem-cs} can be solved explicitly using the \textbf{shrink} operator \eqref{eq:shrink}. Indeed, by the optimality conditions for this problem we have
\begin{equation*}
v_k = \textbf{shrink}\left(\dfrac1\lambda z_{k-1} + w_{k-1},\dfrac{1}{\lambda}\right).
\end{equation*}

The optimality condition for the minimization problem \eqref{eq:u-subproblem-cs} is
\begin{equation*}
0 = \zeta  F^T R^T (RFu_k - b) + \nabla^\ast(z_{k-1} - \lambda v_k) + \lambda\nabla^\ast\nabla u_k, 
\end{equation*}
or equivalently 
\begin{equation*}
(\zeta F^T R^T RF + \lambda\nabla^\ast\nabla)u_k = \zeta F^T R^T b - \nabla^\ast(z_{k-1} - \lambda v_k).
\end{equation*}
Thus, we obtain $u_k$, the solution of the system above, by
\begin{equation*}
u_k = F^T( \zeta R^T R + \lambda F\nabla^\ast\nabla F^T)^{-1}F(\zeta F^T R^T b - \nabla^\ast(z_{k-1} - \lambda v_k)).
\end{equation*}

We tested the PMM on two synthetic phantom. The first is the digital Shepp-Logan phantom with dimensions $256\times256$, which was created with the Matlab function ``\texttt{phantom}". For the compressed sensing problem of reconstructing this image we measured at random $25\%$ of the Fourier coefficients. The second experiment was done with a CS-Phantom of size $512\times512$, which was taken from the mathworks web site. For this image we used $50\%$ of the Fourier coefficients. As stopping condition for these problems was used the criterion given by the residuals for the KKT conditions. More specifically, the PMM and ADMM were stopped when both, the primal and dual residual, associated with each method was less than a prefixed tolerance. 
 Figure \ref{fig:recovered phantoms} shows the test images and their reconstructions using the PMM.
 
\begin{figure}[h]
\centering
\subfigure{\includegraphics[height=60mm,width=60mm]{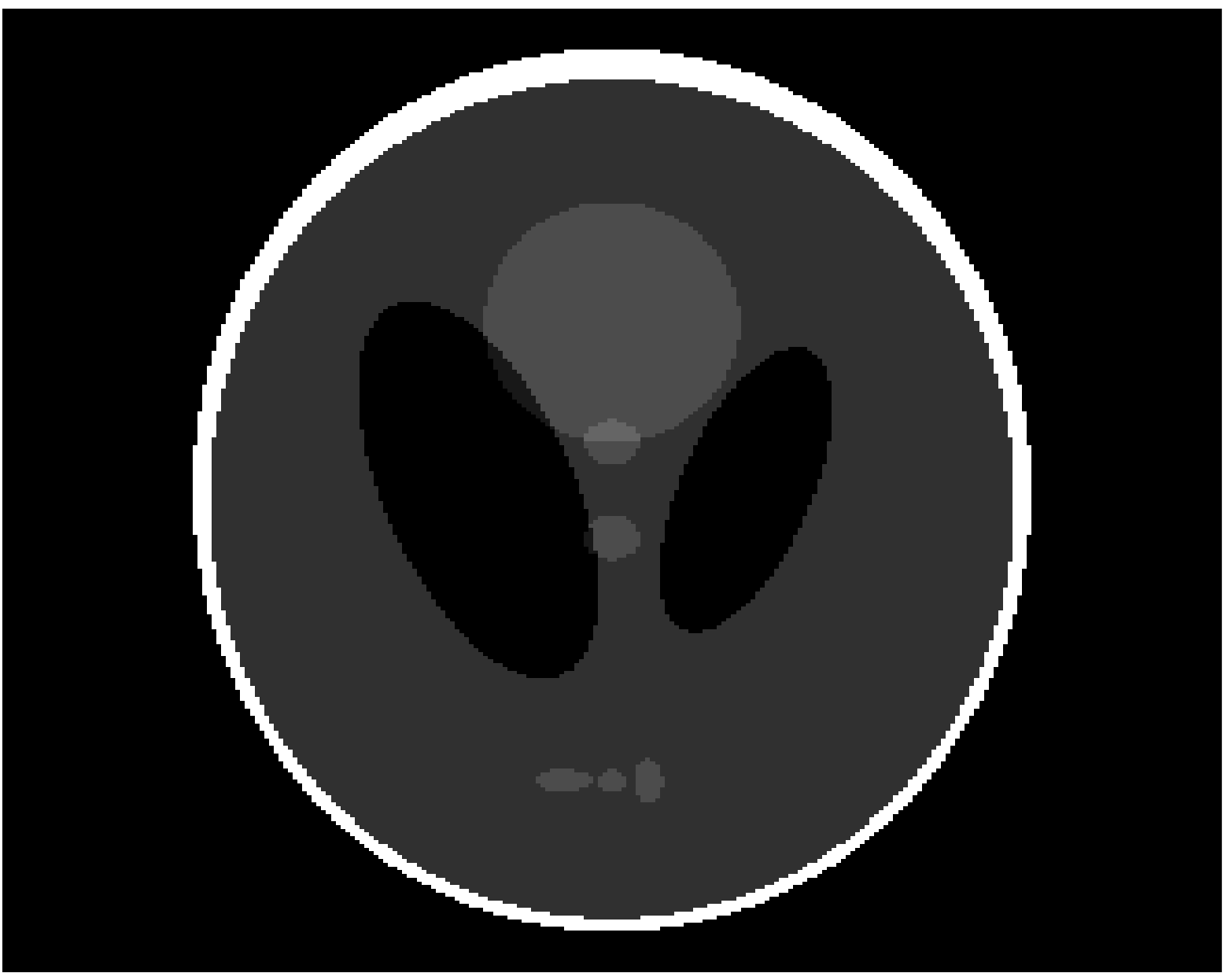}}
\subfigure{\includegraphics[height=60mm,width=60mm]{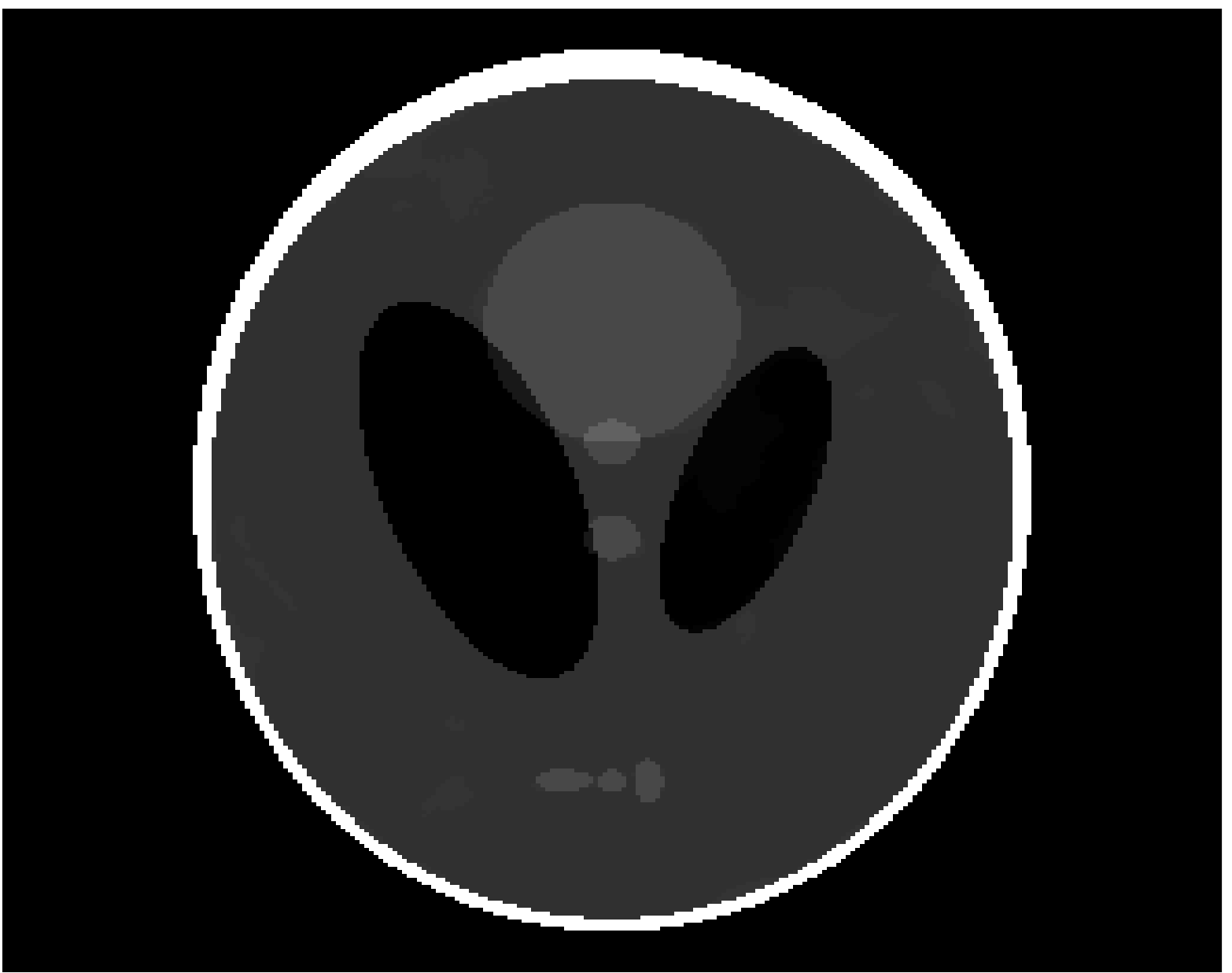}}
\subfigure{\includegraphics[height=60mm,width=60mm]{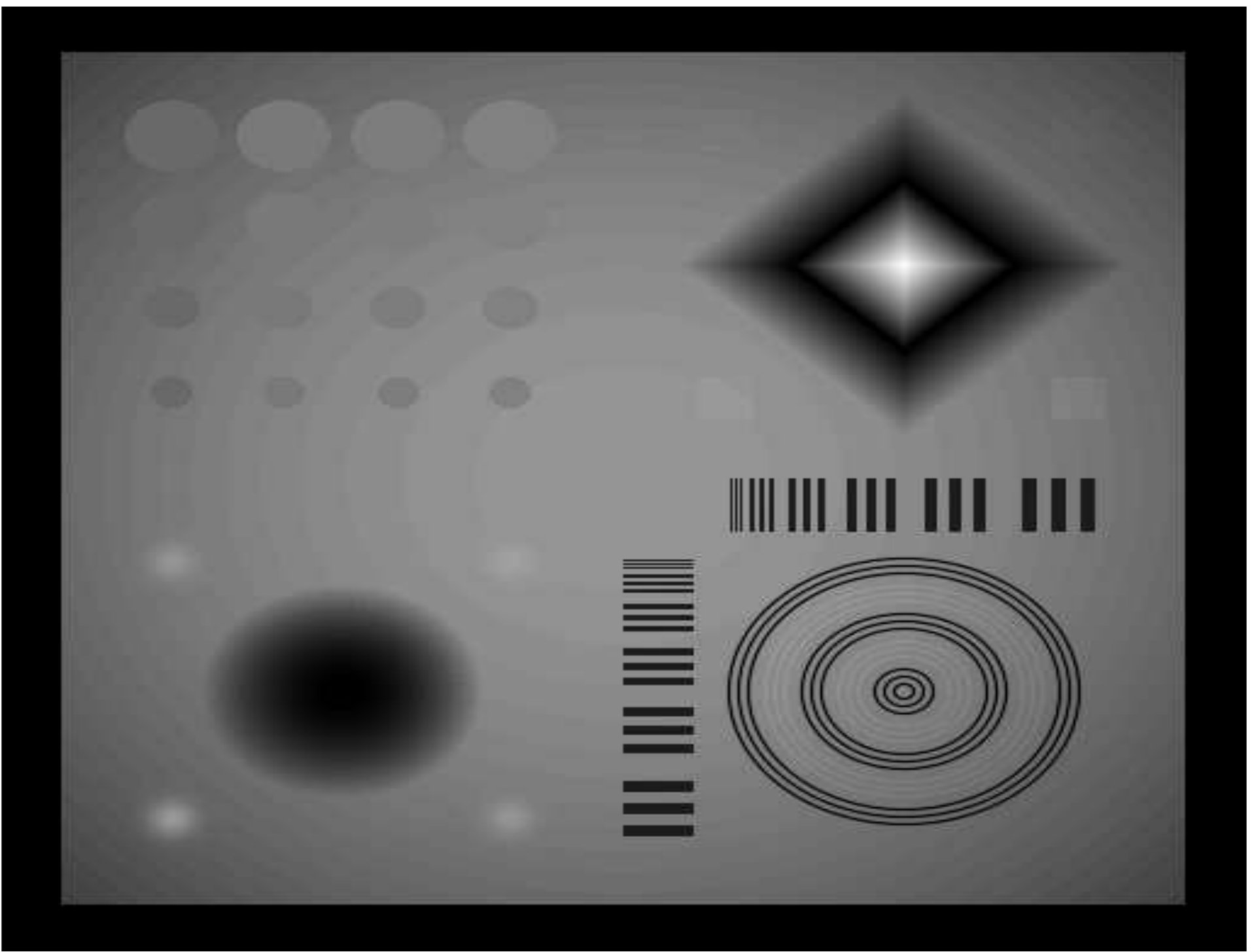}}
\subfigure{\includegraphics[height=60mm,width=60mm]{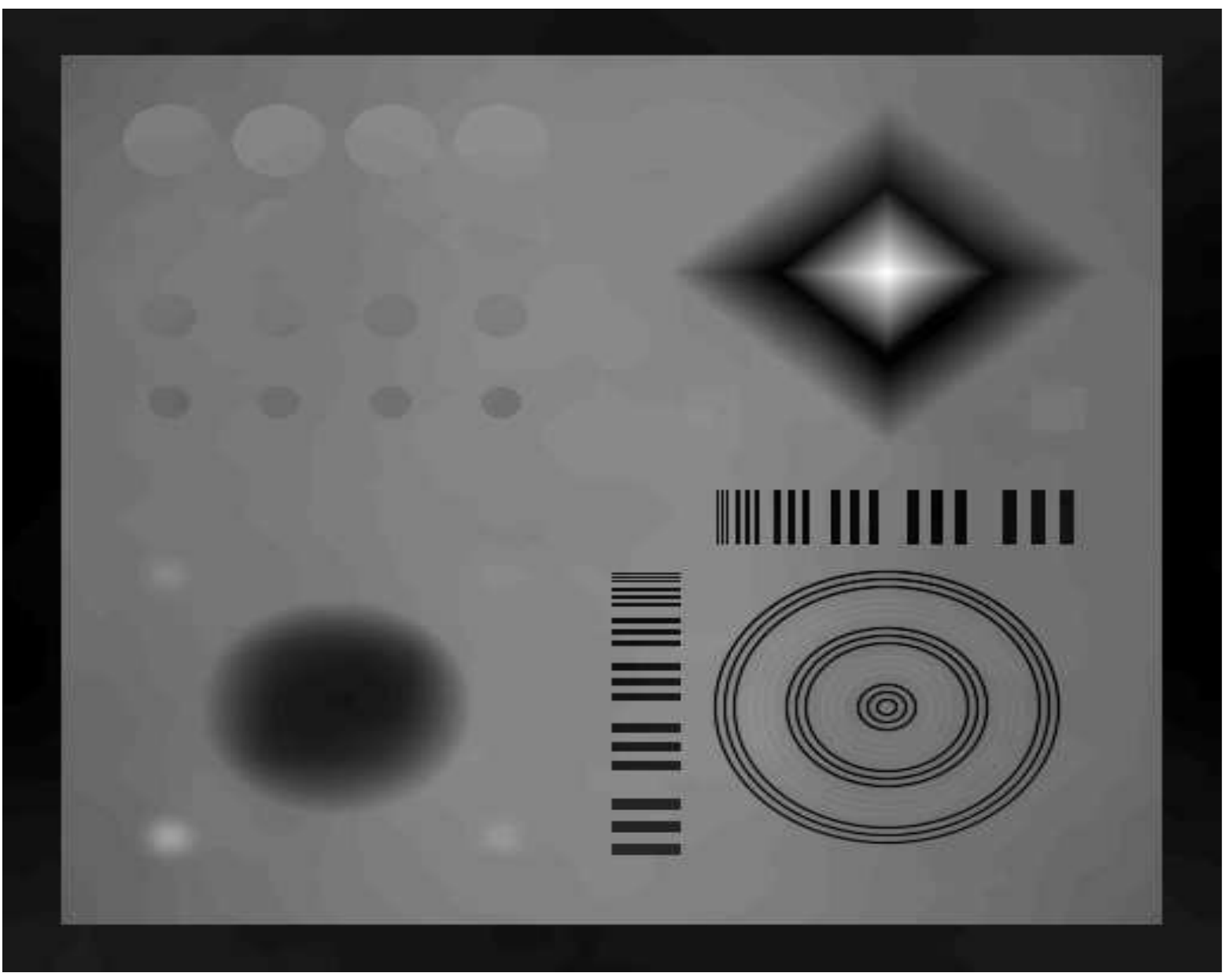}}
\caption{(left) Images used in compressed sensing tests. (right) The images reconstructed with the PMM. The Shepp-Logan phantom (top) was recovered with $25\%$ sampling and the CS-Phantom (bottom) with $50\%$.}
\label{fig:recovered phantoms}
\end{figure} 

For all the experiments we used $\zeta=500$, $\rho=1.5$ and $\lambda=1$, since we found that these choices were effective for both methods. 

The performance of the PMM and ADMM can be seen in Figure \ref{fig:residuals cs}, which reports the residuals curves for both methods, as were the error $\norm{u^k-u^\ast}$, where $u^\ast$ is the exact solution. Observe that the primal curves for both methods are very similar along all iterations. 
However, the decay for the dual residual curve for the PMM is much faster than the dual residual for the ADMM.

\begin{figure}[h]
\centering
\subfigure{\includegraphics[height=55mm,width=70mm]{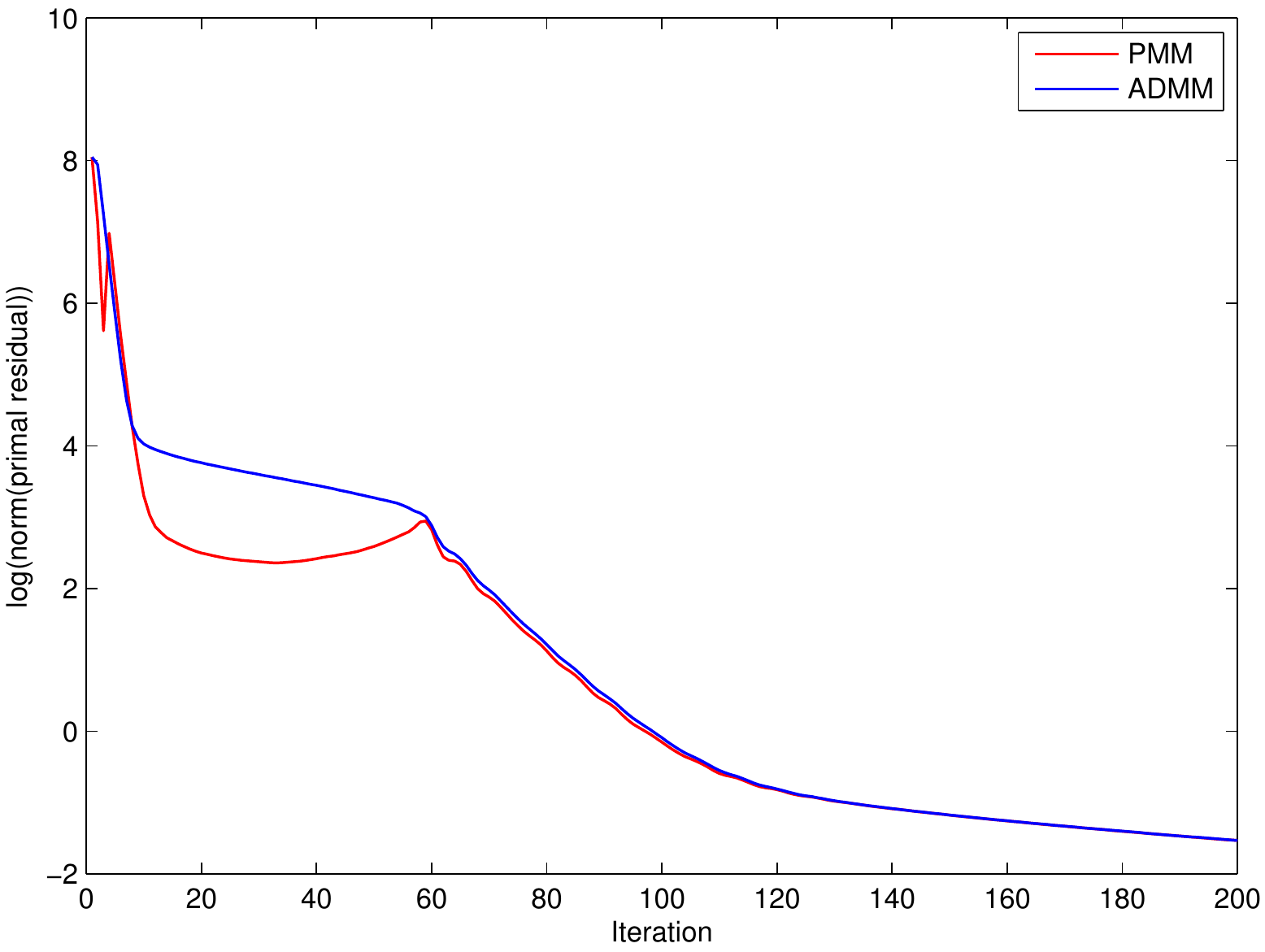}}
\subfigure{\includegraphics[height=55mm,width=70mm]{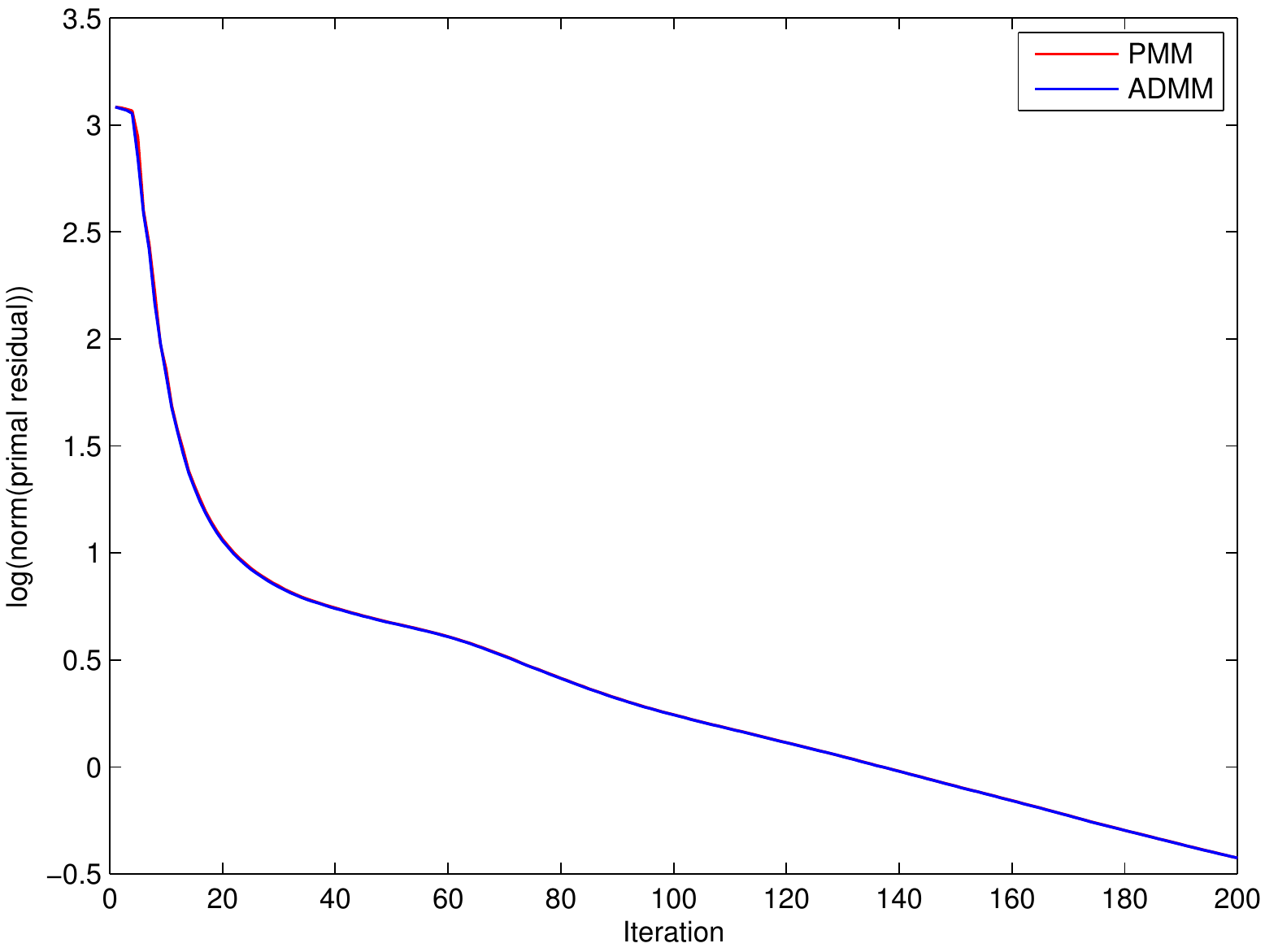}}
\subfigure{\includegraphics[height=55mm,width=70mm]{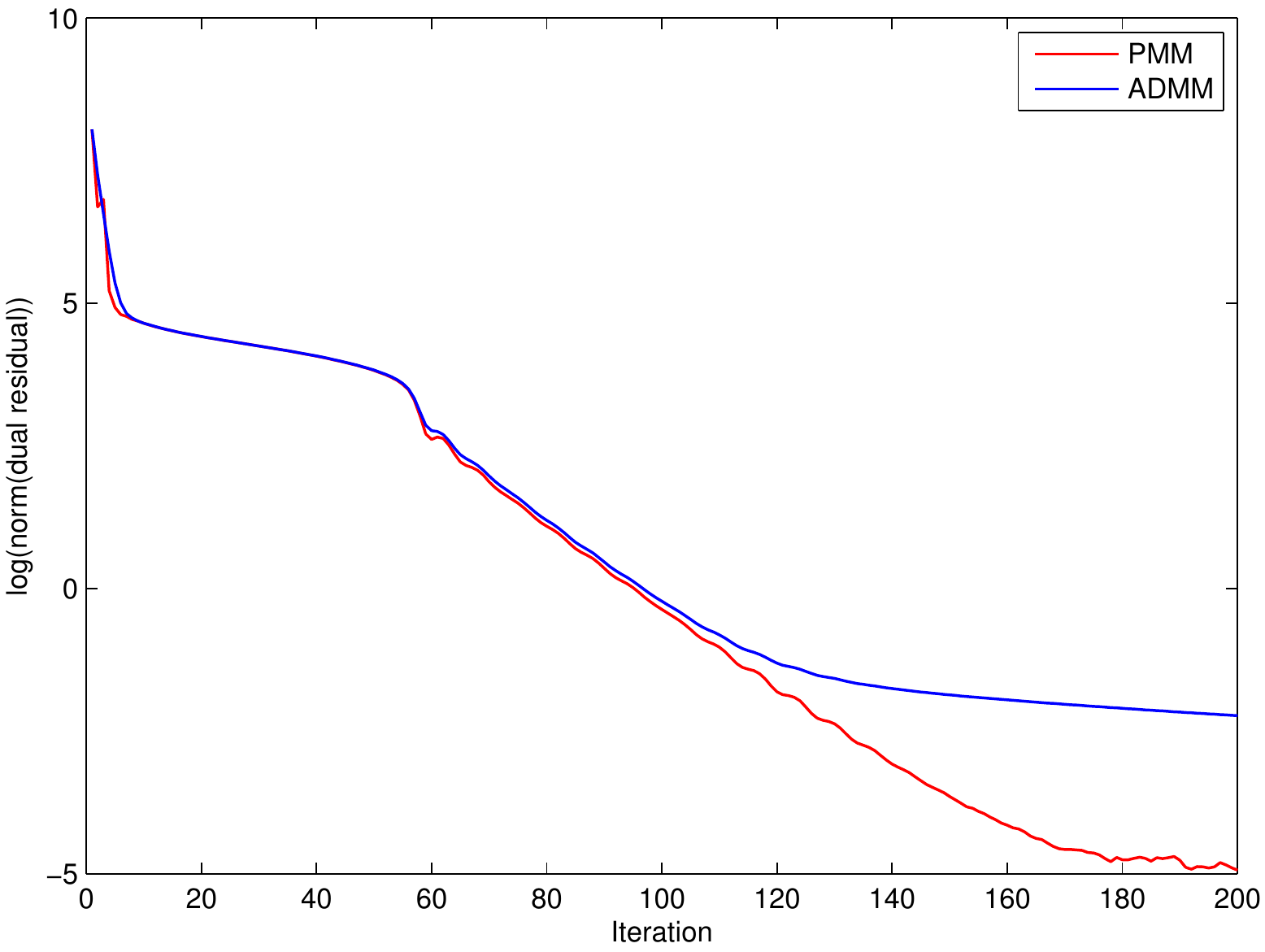}}
\subfigure{\includegraphics[height=55mm,width=70mm]{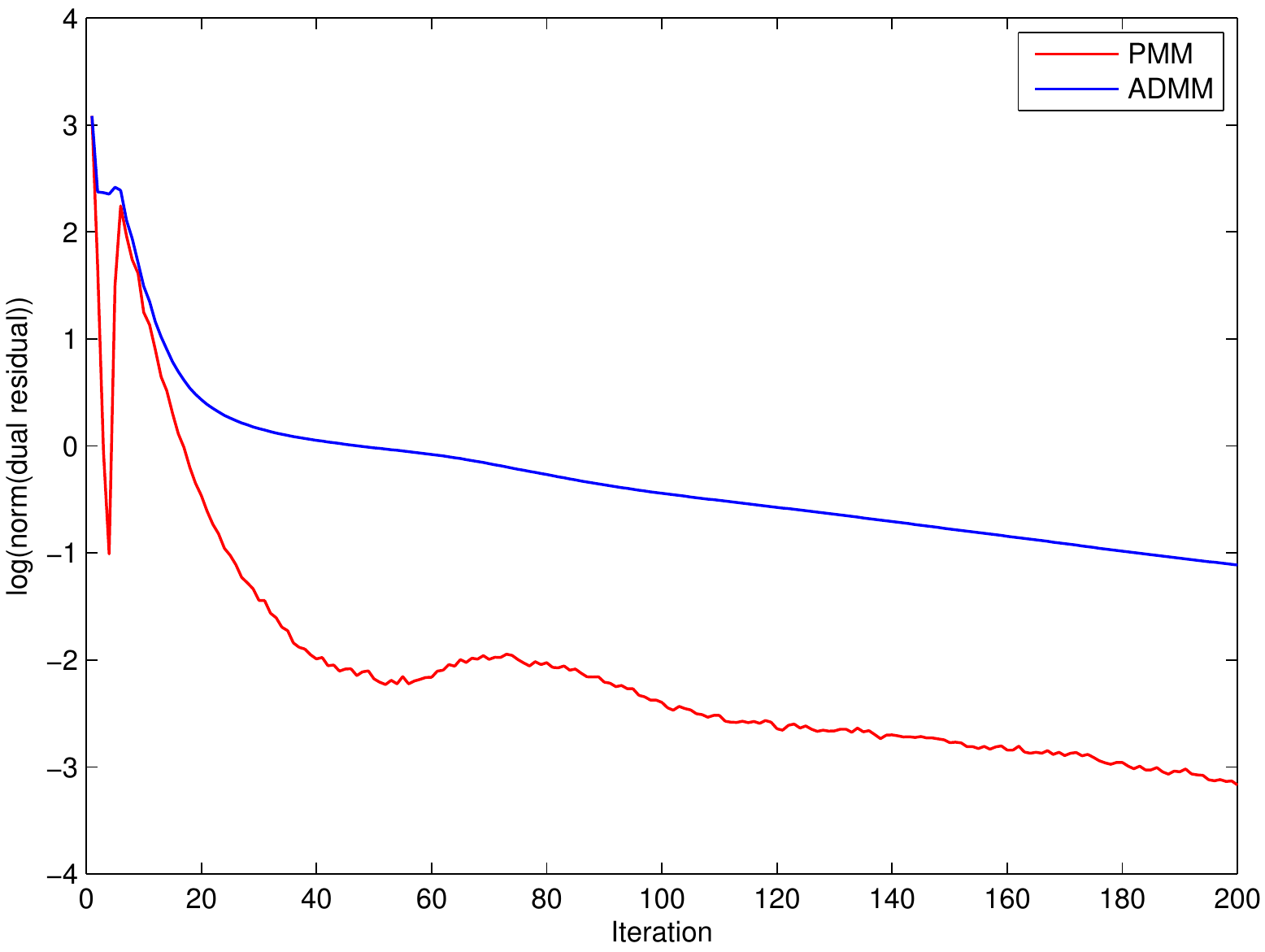}}
\subfigure{\includegraphics[height=55mm,width=70mm]{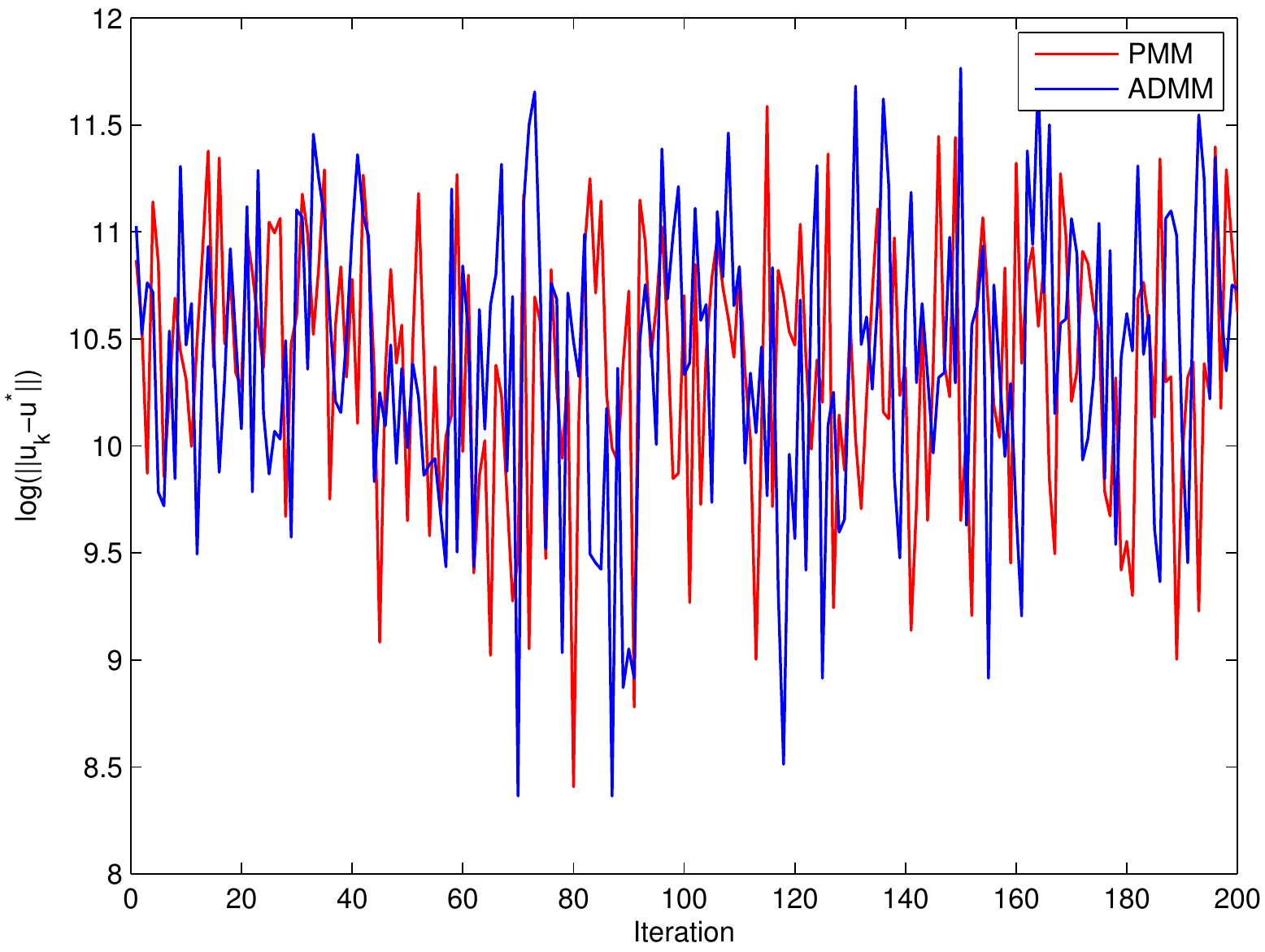}}
\subfigure{\includegraphics[height=55mm,width=70mm]{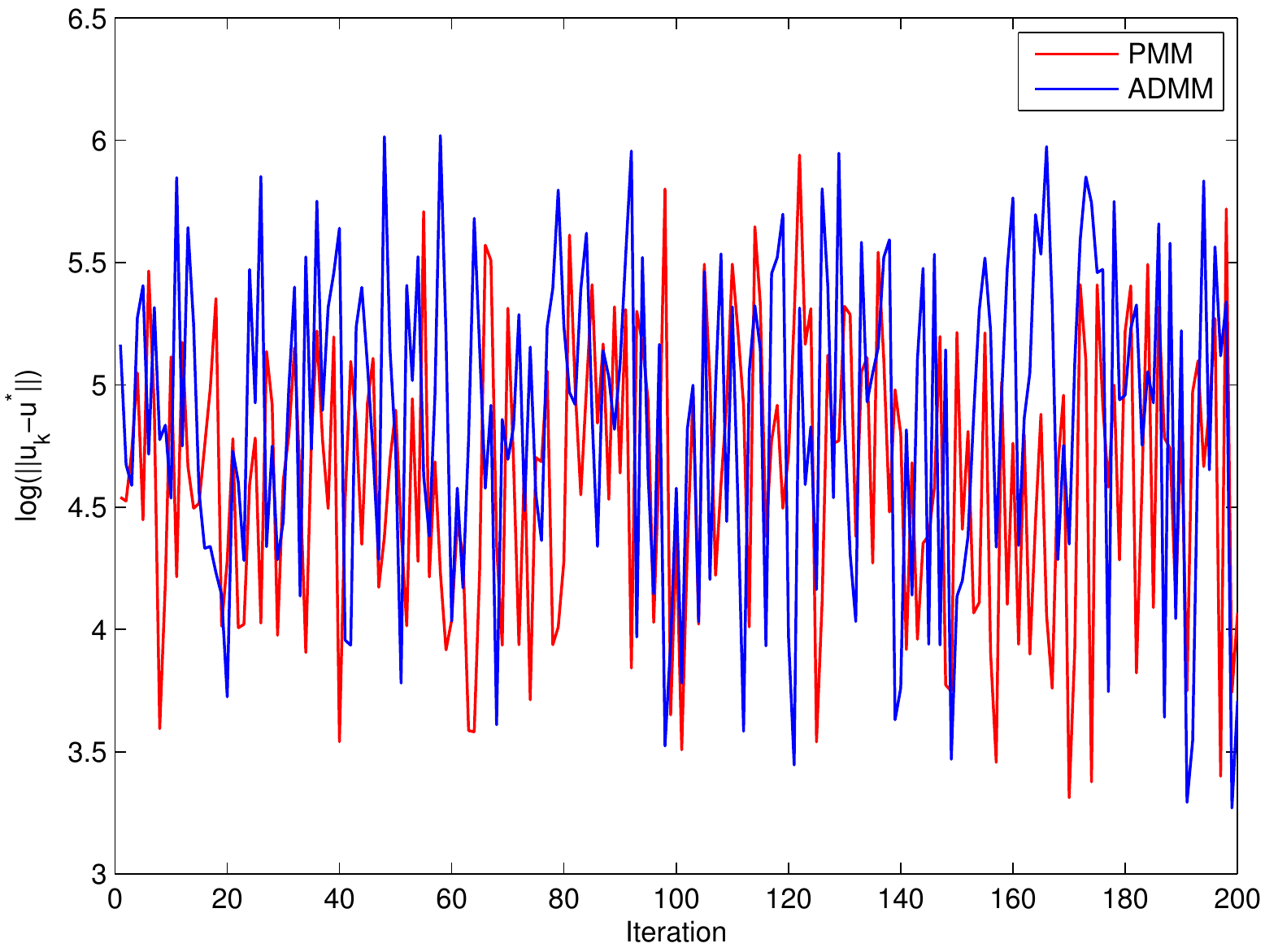}}
\caption{Residuals curves of the PMM and ADMM for the compressed sensing problems. (top) Primal error $\norm{\nabla u_k - v_k}$ vs iteration number. (center) Dual error $\norm{x_k - y_k}$ vs iteration number. (bottom) Error $\norm{u_k-u^\ast}$ vs iteration number ($u^\ast$ is the exact solution). (left) Convergence results are for the Shepp-Logan phantom. (right) Convergence results are for the CS-Phantom.}
\label{fig:residuals cs}
\end{figure}

\subsection{The dual residual}
It was observed in our numerical experiments that, despite the overall rate of decrease for the PMM and the ADMM are very similar, the dual variable in the PMM sequence is smaller than the ADMM dual variable. This could be an advantage for the PMM, and motivates us to study the performance of the method using a stopping criterion based on the dual residual.

In this subsection we present some preliminary computational results 
considering a termination condition that only uses information from the dual residual sequences.
We use as test problems the TV \eqref{eq:tv problem} and CS \eqref{eq:cs problem} problems discussed in the previous subsections. The algorithms were run until condition 
\begin{equation}
\label{eq:stop-dual}
\norm{d_k}/(m*n)\leq10^{-6}
\end{equation}
was satisfied, where $d_k$ is the corresponding dual residual of the sequence at iteration $k$, and $m$ and $n$ are the dimensions of the images. In all the experiments we fixed $\lambda=1$.

Table \ref{tab:dual-stop-crit} presents the number of iterations and time in seconds required for the PMM and ADMM to solve the problems in the experiments. We observe that the performances of the PMM and ADMM using criterion 
\eqref{eq:stop-dual} are very similar in processing time and number of iterations when $\rho=1$. However, for $\rho>1$ the PMM is generally much faster than ADMM. We also notice that the PMM accelerates for $\rho>1$, when compared to the $\rho=1$ case, which does not always occur for the ADMM.

Figures \ref{fig:lena-dual-res} and \ref{fig:logan-dual-res} show the image reconstruction results for some tests. It can be observed in Figure \ref{fig:lena-dual-res} that for the TV problem both methods recover good images 
using \eqref{eq:stop-dual}. This is not surprising since the stopping criterion used in subsection \ref{subs:tv} is more flexible than \eqref{eq:stop-dual}, and the restoration results were satisfactory (see subsection \ref{subs:tv}). 
It turns out that for the CS problem, although the termination condition considered in subsection \ref{subs:cs} is more restrictive than \eqref{eq:stop-dual}, the PMM and ADMM can also reconstruct images with good quality using this last stopping criterion, 
as can be seen in Figure \ref{fig:logan-dual-res}.

%\clearpage
\begin{table}[h]
\begin{center}
\begin{tabular}{l|lr|lr}
& \multicolumn{2}{c|}{PMM} & \multicolumn{2}{c}{ADMM}\\
\hline
\multicolumn{1}{c|}{Problem} & $\#$ It & time(s) & $\#$ It & time(s)\\
\hline
TV(Man, $\zeta=20$, $\rho=1$, $\sigma=0.03$) & 98 & 154.708&114 &161.859\\
TV(Man, $\zeta=20$, $\rho=1.8$, $\sigma=0.03$)&71&56.753 &79&55.086\\
TV(Lena, $\zeta=40$, $\rho=1$, $\sigma=0.04$)&248&74.617 &289&74.603\\
TV(Lena, $\zeta=40$, $\rho=1.5$, $\sigma=0.04$)&184&60.477 &418&89.560\\
TV(Baboon, $\zeta=20$, $\rho=1$, $\sigma=0.01$)&137&45.185 &148&45.251\\
TV(Baboon, $\zeta=20$, $\rho=1.3$, $\sigma=0.01$)&101&34.016 &170&44.787\\
CS(Shepp-Logan, $\zeta=500$, $\rho=0.8$, $25\%$)&193&28.623&273&46.412\\
CS(Shepp-Logan, $\zeta=500$, $\rho=1$, $25\%$)&160&23.508 &160&27.055\\
CS(Shepp-Logan, $\zeta=500$, $\rho=1.3$, $25\%$)&140&21.524 &229&36.013\\
CS(Shepp-Logan, $\zeta=500$, $\rho=1.6$, $25\%$)&138&16.998 &338&45.221\\
\hline
\end{tabular}
\end{center}
\caption{Performance results using stooping criterion \eqref{eq:stop-dual}.}
\label{tab:dual-stop-crit}
\end{table}

\clearpage
\begin{figure}[h]
\centering
\subfigure{\includegraphics[height=42mm,width=42mm]{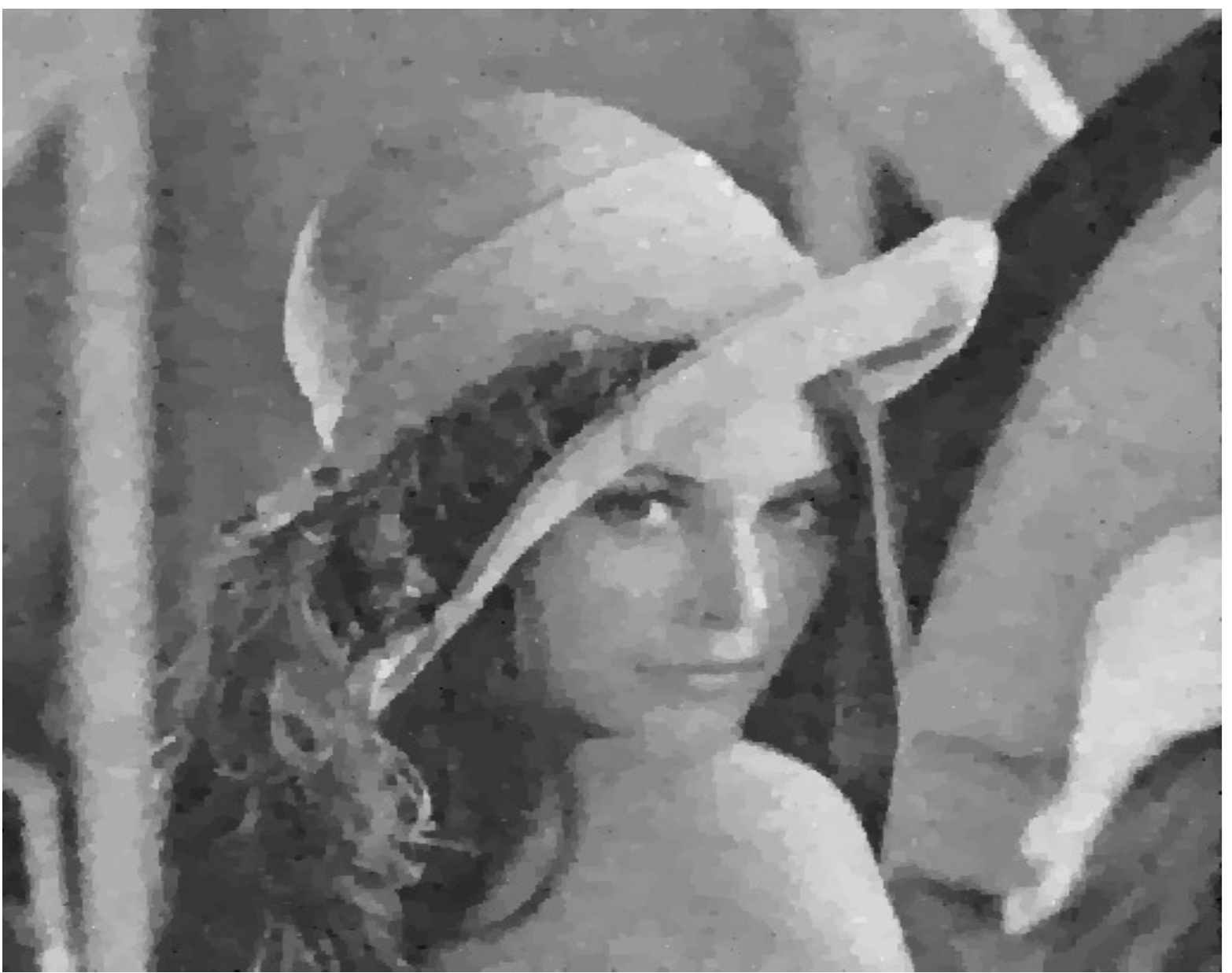}}
\subfigure{\includegraphics[height=42mm,width=42mm]{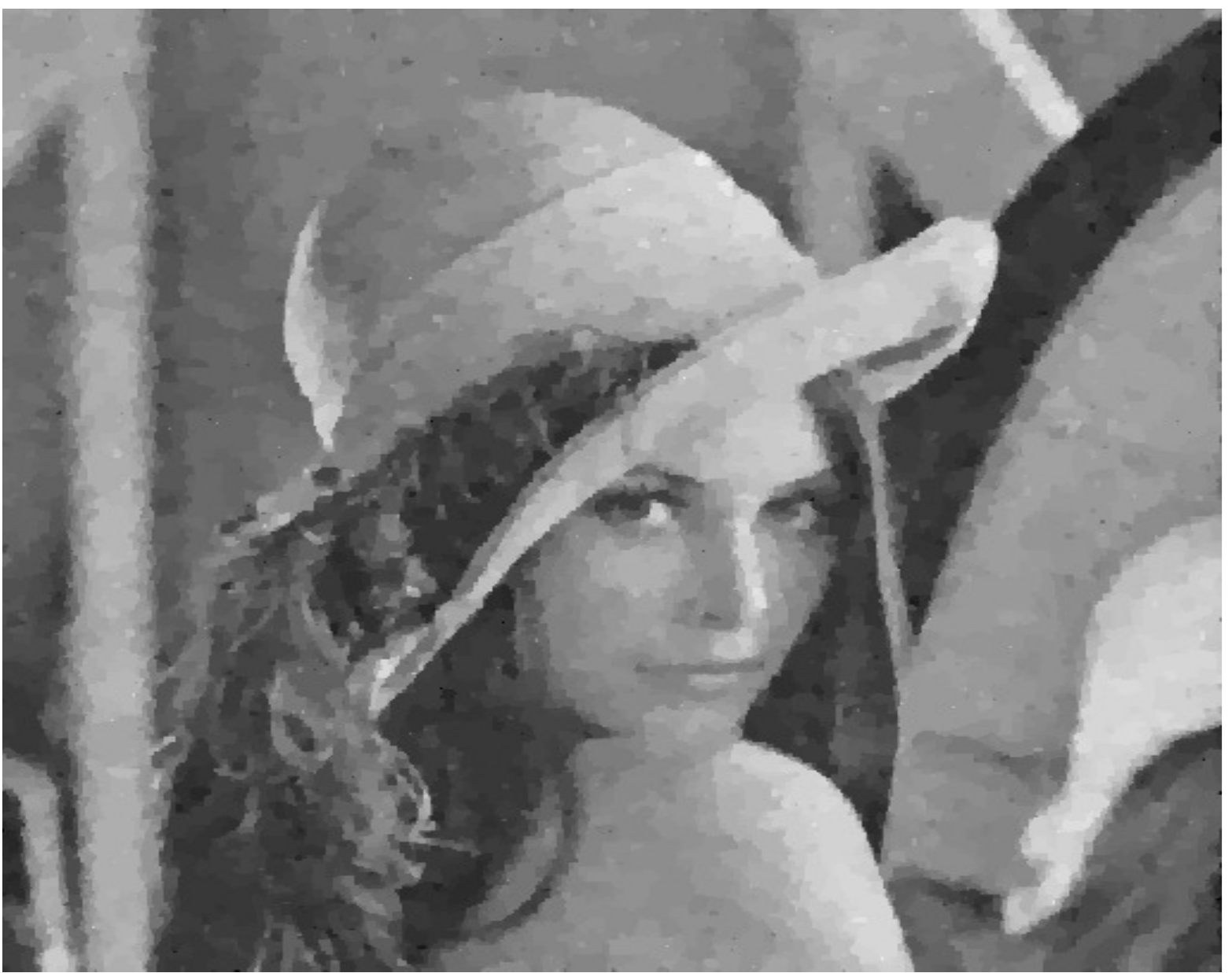}}
\caption{TV problem for the test image Lena, which was contaminated with Gaussian noise with variance $\sigma=0.04$. (left) Image denoised with PMM. (right) Image denoised with ADMM. The image was denoised using $\zeta=40$ and $\rho=1$.}
\label{fig:lena-dual-res}
\end{figure}
%clearpage
\begin{figure}[h]
\centering
\subfigure{\includegraphics[height=42mm,width=42mm]{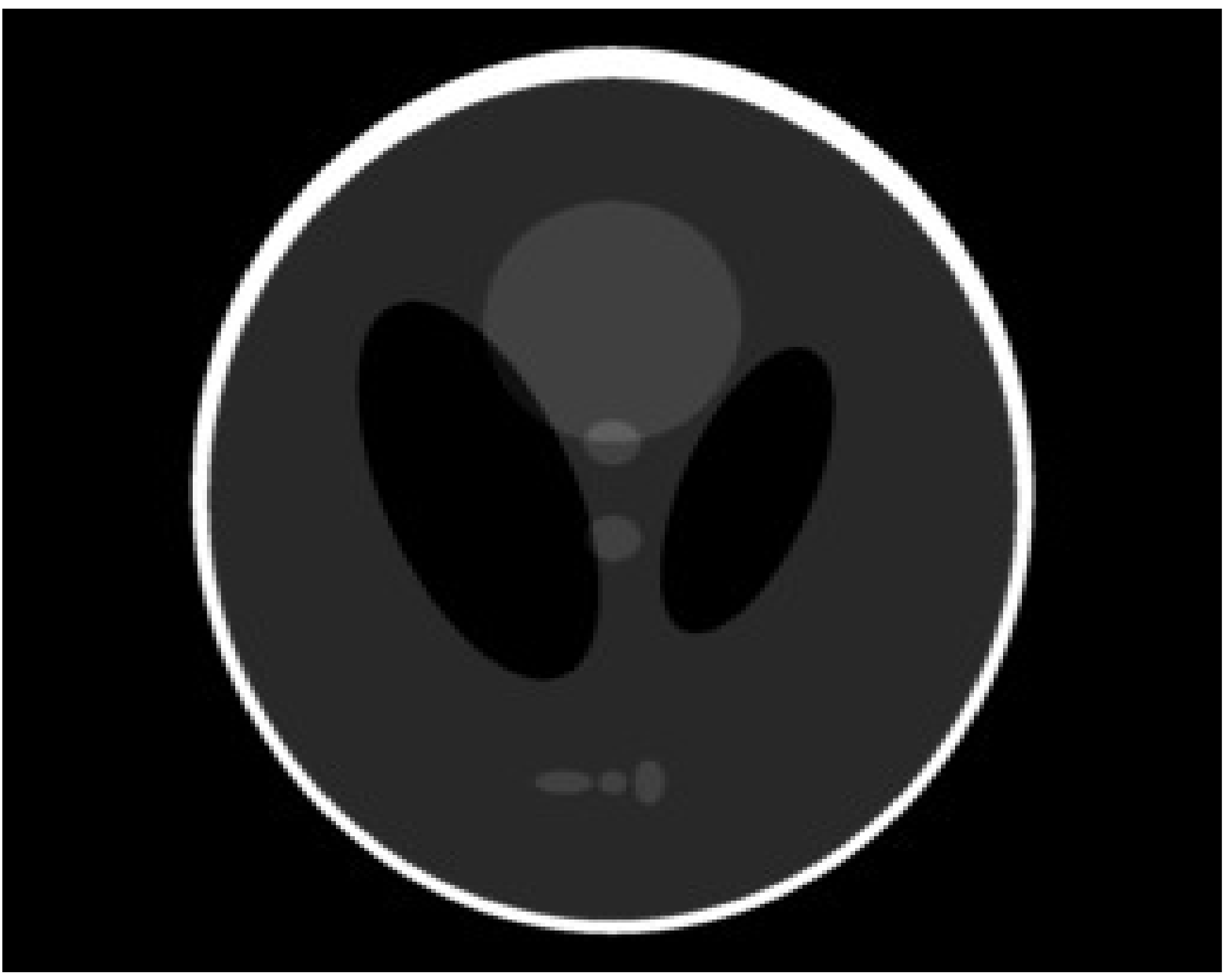}}
\subfigure{\includegraphics[height=42mm,width=42mm]{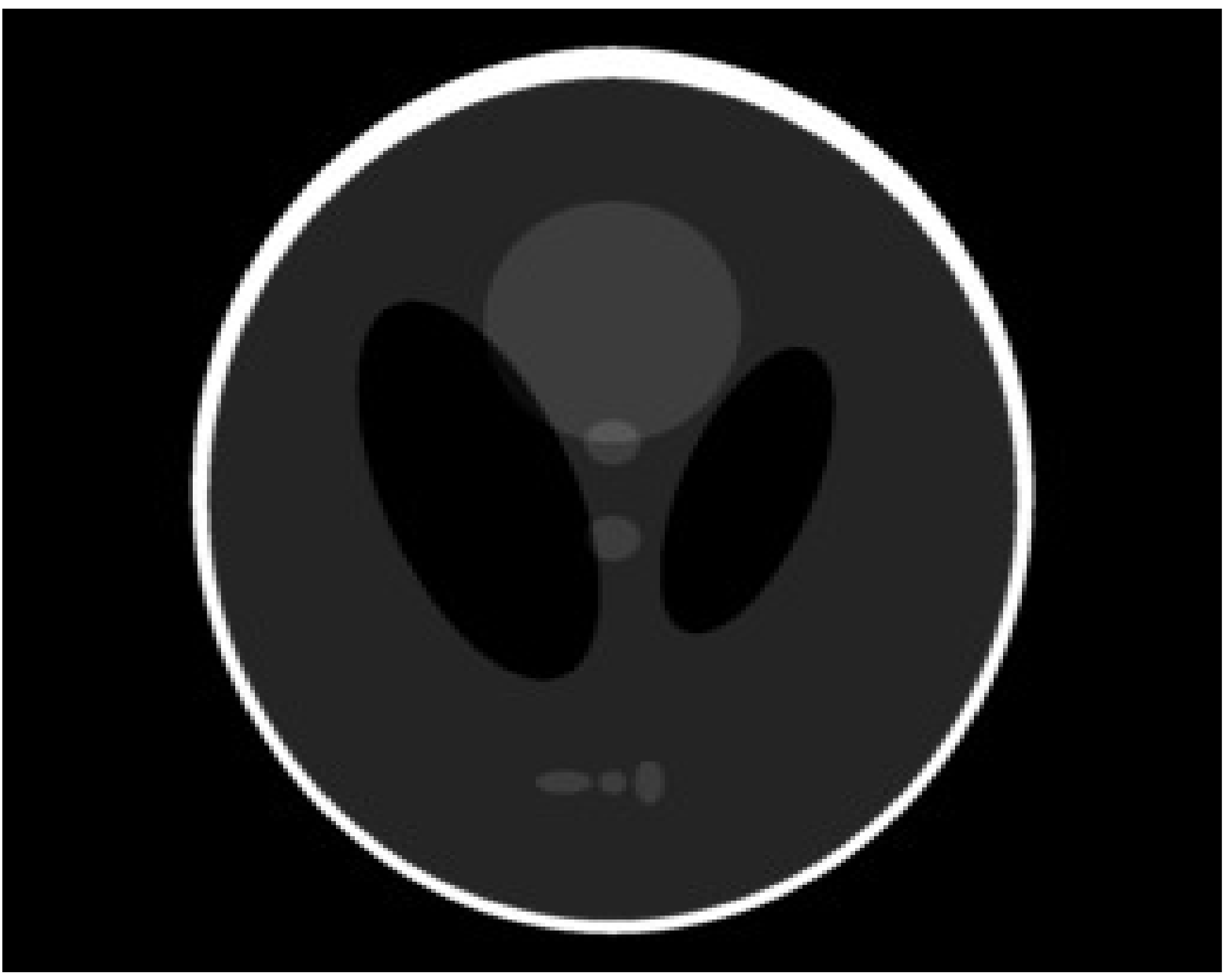}}
\caption{Compressed sensing problem for the test image Shepp-Logan phantom with $25\%$ sampling. (left) Image recovered with the PMM. (right) Image recovered with the ADMM. In the experiments were used $\zeta=500$ and $\rho=1.3$.}
\label{fig:logan-dual-res}
\end{figure}

\section*{Acknowledgements}
The author would like to thank Carlos Antonio Galeano Ríos and Mauricio Romero Sicre for the many helpful suggestions on this paper, which have improved the exposition considerably.

 \bibliographystyle{acm}
\bibliography{projective_method_of_multipliers}
\end{document}